\numberwithin{equation}{section}
\def\hs{{\mathcal H}^s}
\newcommand{\alq}{\tfrac{{\vv a} + \bm\lambda}{q}}
\newcommand{\Mat}{\operatorname{Mat}}
\newcommand{\GL}{\mathrm{GL}}
\newcommand{\cU}{{\vv U}}
\newtheorem{theorem}{Theorem}[section]
\newtheorem{theoremKM}{Theorem KM \!\!\!\!}
\newtheorem{corollary}{Corollary}[section]
\newtheorem{lemma}{Lemma}[section]
\newtheorem{proposition}{Proposition}[section]
\theoremstyle{remark}
\newtheorem{remark}{Remark}[section]
\def\R{\mathbb{R}}
\def\Q{\mathbb{Q}}
\def\Z{\mathbb{Z}}
\def\N{\mathbb{N}}
\newcommand{\ka}{\kappa}
\def\cH{\mathcal{H}}
\def\cM{\mathcal{M}}
\def\cG{\mathcal{G}}
\def\cU{\mathcal{U}}
\def\cF{\mathcal{F}}
\def\cS{{\mathcal{S}}}
\def\cR{\mathcal{R}}
\def\cG{\mathcal{G}}
\newcommand{\we}{\wedge}
\newcommand{\ve}{\varepsilon}
\newcommand{\vv}[1]{{\mathbf{#1}}}
\newcommand{\mv}[1]{{\bm#1}}
\newcommand{\rank}{\operatorname{rank}}
\newcommand{\dist}{\operatorname{dist}}
\newcommand{\diag}{\operatorname{diag}}
\renewcommand{\tilde}{\widetilde}
\newcommand{\vol}{\operatorname{Vol}}
\newcommand{\Jarnik}{Jarn\' \i k}
\newcommand{\hidden}[1]{}
\begin{document}

\title[Diophantine approximation on manifolds]{Diophantine approximation on curves and the distribution of rational points: divergence theory}

\author{V. Beresnevich \and R.C. Vaughan \and S. Velani \and E. Zorin}

\thanks{VB and SV:  Research supported by EPSRC Programme Grant EP/J018260/1.}
\thanks{EZ:  Research supported by EPSRC Grant EP/M021858/1.}
\thanks{VB, SV and EZ:  University of York, Heslington, York, YO10 5DD, UK}
\thanks{RCV: Pennsylvania State University, University Park, PA 16802-6401, USA}

\date{{\small\today}}

\begin{abstract}
In this paper we develop a new explicit method to studying rational points near manifolds and obtain optimal lower bounds on the number of rational points of bounded height lying at a given distance from an arbitrary non-degenerate curve. This generalises previous results for analytic non-degenerate curves. Furthermore, the main results are also proved in the inhomogeneous setting. Applications of the main theorem include the Khintchine-Jarn\'ik type theorem for divergence for arbitrary non-degenerate curve in $\R^n$.
\end{abstract}

\maketitle

{\footnotesize
\noindent\emph{Key words and phrases}: simultaneous Diophantine approximation on manifolds, metric theory, rational points near manifolds, Khintchine theorem, \Jarnik{} theorem, Hausdorff dimension, ubiquitous systems

\noindent\emph{2000 Mathematics Subject Classification}: 11J83, 11J13, 11K60, 11K55}

\vspace*{0ex}

\section{Introduction and statement of results \label{convintro}}

The main goal of this paper is to obtain sharp lower bounds for the number of rational points lying close to an  arbitrary non-degenerate curve in $\R^n$. Motivated by applications to Diophantine approximation on manifolds, such bounds were obtained for planar curves \cite{Beresnevich-Dickinson-Velani-07:MR2373145, Beresnevich-Vaughan-Velani-11:MR2777039, BZ2010} and analytic non-degenerate submanifolds of $\R^n$ \cite{Beresnevich-SDA1}. Out motivation is precisely the same and we shall describe such applications in \S\ref{applicitons} below.

Recall that a real connected analytic submanifold of $\R^n$ is non-degenerate if and only if it is not contained in any hyperplane of $\R^n$ \cite[p.~341]{Kleinbock-Margulis-98:MR1652916}.
One of the main results of \cite{Beresnevich-SDA1}, implies that for any analytic non-degenerate submanifold $\cM\subset\R^n$ of dimension $d$ and codimension $m=n-d$ we have that
\begin{equation}\label{e602}
\#\Big\{\vv p/q\in\Q^n:\ 1\le q\le Q,\ \dist(\vv
p/q,\cM)\le\frac\psi Q\Big\}\ge  C_1 \psi^{m} Q^{d+1}
\end{equation}
for all sufficiently large $Q$ and all real $\psi$ satisfying
\begin{equation}\label{e600}
    C_2Q^{-1/m }< \psi < 1
\end{equation}
where the symbol $\#$ stands for `cardinality' and $C_1$ and $C_2$ are positive constants depending only on the manifold $\cM$ and the dimension $n$ of the space. Furthermore, as shown in \cite[Theorem~7.1]{Beresnevich-SDA1} for analytic non-degenerate curves \eqref{e600} can be relaxed to
\begin{equation}\label{e601}
    C_2 Q^{-\frac{3}{2n-1}}< \psi < 1\,.
\end{equation}
It is believed that the above results for analytic non-degenerate manifolds should hold for arbitrary non-degenerate manifolds. Indeed, this is the case for planar curves, see \cite{Beresnevich-Dickinson-Velani-07:MR2373145, BZ2010}. The main purpose of the present paper is to show this is the case for non-degenerate curves in arbitrary dimensions. Furthermore, we obtain an inhomogeneous extension of \eqref{e602}, which to date is only  known in the case  $n=2$, see \cite{Beresnevich-Vaughan-Velani-11:MR2777039}.

Before we proceed with the statement of results, let us recall the definition of non-degeneracy in the non-analytic case.
Firstly, a map  $\vv f  : \mathcal U  \to \R^n$ defined on an open set $\cU\subset\R^d$ is called \emph{$l$-non--degenerate at} $\vv x\in \mathcal U$ if $\vv f$  is $l$ times continuously differentiable on some sufficiently small ball centred at $\vv x$ and the partial derivatives
of $\vv f$ at $\vv x$ of orders up to $l$ span $\R^n$. The map $\vv f$ is
called \emph{non--degenerate} at $\vv x$ if it is $l$-non--degenerate at $\vv x$ for some $l$; in turn
a manifold $ \cM\subset\R^n$ is said to be non--degenerate at $\vv y\in\cM$ if there is a neighbourhood of $\vv y$ that can be parameterised by a map $\vv f$ non-degenerate at $\vv f^{-1}(\vv y)$. In general, non-degenerate manifolds  are smooth sub-manifolds of $\R^n$ which are sufficiently curved so as to deviate from any hyperplane at a polynomial rate, see \cite[Lemma~1(c)]{B2002}.

\subsection{Results for rational points near manifolds}\label{counting section}

Throughout, $|X|$ is the Lebesgue measure of a measurable subset $X$ of $\R$, $\|\cdot\|_2$ is the Euclidean norm and  $\|\cdot\|_\infty$ is the supremum norm. In what follows, unless otherwise stated, all balls will be considered with respect to the supremum norm.
Let $d,m\in\N$, $n=d+m$ and $\bm f=(f_1,\ldots,f_m)$ be defined and continuously differentiable on a given fixed ball $\mathcal U$ in $\R^d$. The map $\bm f$ naturally gives rise to the $d$-dimensional  manifold
\begin{equation}
\label{monge}
 \cM_{\bm f} := \left\{ (\vv x,\bm f(\vv x )) \in \R^n : \vv x =(x_1, \ldots, x_d) \in \mathcal U  \right\}  \,
\end{equation}
immersed into $\R^n$. By the  Implicit  Function Theorem, any smooth submanifold $\cM$ of $\R^n$ can be (at least locally) defined in  this manner; i.e. with a Monge parametrisation. Hence, in what follows, without loss of generality, we will work with a manifold $\cM$ as in \eqref{monge}.

Given $0<\psi<1$, $Q>1$, a ball $B\subset\cU$ and $\bm\theta=(\bm\lambda,\bm\gamma) \in \R^d \times \R^m$, consider the set
 \begin{equation}
\label{sv1}
\mathcal R(Q,\psi,B,\bm\theta):= \left\{(q,\mathbf a, \mathbf b)  \in \N\times\Z^d \times \Z^m    :
  \begin{array}{l}
  \alq \in  B \, ,~~\tfrac12Q<q\le Q\,,  \\[1ex]
    \|q \bm f\big(\alq\big) - \bm\gamma - {\mathbf b} \|_\infty < \psi
    \end{array}
\right\}\,.
\end{equation}
Also we define
\begin{equation} \label{def_big_Delta}
\Delta(Q,\psi,B,\bm\theta,\rho):=\bigcup_{(q,\vv a,\vv b)\in\cR(Q,\psi,B,\bm\theta)}B(\alq,\rho),
\end{equation}
where $B(\vv x,\rho)$ denotes the ball  in $\R^d$ centred at $\vv x$ and of radius $\rho$.
Clearly, $\cR(Q,\tfrac12\psi,B,\bm\theta)$ contains shifted rational points
\begin{equation}\label{points}
\textstyle{\big( \frac{a_1+\lambda_1}{q}, \dots, \frac{a_d+\lambda_d}{q}, \frac{b_1+\gamma_1}{q}, \ldots, \frac{b_m+\gamma_m}{q} \big) }  \in \R^n
\end{equation}
with denominators  $q$ in $[\tfrac12Q,Q]$ that  lie within the  $2\psi/Q$-neighbourhood  of $\vv f(B)\subset \cM_{\bm f}$, where $\vv f(x):=(x,\mv f(x))$. Thus, an appropriate lower bound on the cardinality  of $\cR(Q,\tfrac12\psi,B,\bm\theta)$ would yield \eqref{e602}. With this in mind, the following statement represents our key result.

\begin{theorem} \label{t3}
Let $\bm\theta\in\R^{n}$, $\bm f=(f_1,\ldots,f_{n-1})$ be a map of one real variable such that $x\mapsto \vv f(x):=(x,\bm f(x))$ is non-degenerate at some point $x_0\in\R$. Then, there exists a sufficiently small interval $\cU$ centred at $x_0$ and constants $C_0,K_0>0$ $($depending on $n$, $\vv f$ and $x_0$ only$)$ such that for any subinterval $B\subset \cU$
there is a constant $Q_B$ depending on $n$, $\bm f$ and $B$ only such that for any integer $Q\ge Q_B$ and any $\psi$ satisfying
\begin{equation} \label{theo_general_psi_db2}
K_0Q^{-\frac{3}{2n-1}}\le \psi< 1
\end{equation}
we have
\begin{equation} \label{theo_general_result_two}
|\Delta(Q,\psi,B,\bm\theta,\rho)|\geq \tfrac12|B|
\end{equation}
where
\begin{equation} \label{def_rho}
\rho=\frac{C_0}{\psi^{n-1} Q^{2}}\,.
\end{equation}
\end{theorem}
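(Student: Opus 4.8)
The plan is to deduce \eqref{theo_general_result_two} from sharp lower and upper bounds on the number of elements of $\cR$ whose centres lie in short subintervals, via a second-moment (Cauchy--Schwarz) argument. Since $d=1$, every $B(\vv x,\rho)$ is an interval of length $2\rho$ and $\Delta=\Delta(Q,\psi,B,\bm\theta,\rho)$ is the union of the intervals $B(\alq,\rho)$ over $(q,\vv a,\vv b)\in\cR(Q,\psi,B,\bm\theta)$. Put
\[
N(\vv x):=\#\{(q,\vv a,\vv b)\in\cR(Q,\psi,B,\bm\theta):\ \vv x\in B(\alq,\rho)\},
\]
so that $\Delta=\{N\ge 1\}$ and $|\Delta|\ge\bigl(\int_B N\bigr)^2\big/\int_B N^2$. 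Here $\int_B N=\sum_{(q,\vv a,\vv b)\in\cR}|B(\alq,\rho)\cap B|\ge\rho\,\#\cR(Q,\psi,B,\bm\theta)$ (the centres lie in $B$), while $\int_B N^2=\int_B N+(\text{off-diagonal})$, the off-diagonal part being at most $2\rho$ times the number of pairs of distinct elements of $\cR$ whose centres lie within $2\rho$ of one another. So everything rests on two estimates: a lower bound $\#\cR(Q,\psi,B,\bm\theta)\gg\psi^{n-1}Q^2|B|$, which with $\rho=C_0/(\psi^{n-1}Q^2)$ gives $\int_B N\gg C_0|B|$; and a matching upper bound for the count in short subintervals, which forces each point of $\cR$ to have only $O(C_0)$ close partners and hence makes the off-diagonal term $\ll|B|$. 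Combining these and arranging the constants — taking $C_0$ large enough, or, more robustly, running the same two counts inside a Vitali-type subdivision of $B$ (at scale $\rho$ the lower count already puts at least one centre in every $\rho$-interval, so $\Delta$ covers $B$ up to a boundary strip of width $2\rho$) — yields $|\Delta|\ge\tfrac12|B|$ once $Q\ge Q_B$ makes $\rho\le\tfrac14|B|$.

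The lower count $\#\cR(Q,\psi,B,\bm\theta)\gg\psi^{n-1}Q^2|B|$, uniformly down to intervals of length $\asymp\rho$, is where the ``explicit method'' enters and is what lets us drop the analyticity hypothesis of \cite{Beresnevich-SDA1}. First shrink $\cU$ about $x_0$ so that $\vv f$ is $C^l$ and $l$-non-degenerate at \emph{every} point of $\cU$, with uniform bounds on the derivatives of $\vv f$ up to order $l$ and on the associated non-degeneracy (curvature) quantities. On a short subinterval, Taylor-expand $\vv f$ about its centre to order $l$: then $\vv f$ becomes a controlled $C^l$-perturbation of a genuinely non-degenerate polynomial (Veronese-type) curve of degree $l\ge n$, for which the number of nearby rational points can be estimated explicitly — for instance by a mean-value/Weyl-type argument exploiting that the model curve lies in no hyperplane. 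Summing the local contributions over a partition of $B$ and over the dyadic block $\tfrac12Q<q\le Q$, and checking that the $C^l$-perturbation is swallowed by the error term once $Q$ is large and $\psi$ is not too small, gives the lower bound; the same analysis, read as an upper bound, supplies the matching upper count needed in the first paragraph.

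The delicate point — and the place where the range \eqref{theo_general_psi_db2} is forced — is the control of close pairs (equivalently, the upper count at the small scale $\rho$). Given $(q,\vv a,\vv b),(q',\vv a',\vv b')\in\cR$ with $|\alq-\tfrac{\vv a'+\bm\lambda}{q'}|<2\rho$, write $\vv x=\alq$, $\vv x'=\tfrac{\vv a'+\bm\lambda}{q'}$ and subtract the conditions $\|q\bm f(\vv x)-\bm\gamma-\vv b\|_\infty<\psi$ and $\|q'\bm f(\vv x')-\bm\gamma-\vv b'\|_\infty<\psi$; Taylor-expanding $\bm f(\vv x')$ about $\vv x$ to second order produces a remainder of size $\ll Q\rho^2$, and since
\[
Q\rho^{2}=\frac{C_0^{2}}{\psi^{\,2n-2}Q^{3}}\ll\psi\qquad\Longleftrightarrow\qquad\psi\gg C_0^{2/(2n-1)}Q^{-3/(2n-1)},
\]
this remainder is absorbed into the $O(\psi)$ error precisely under \eqref{theo_general_psi_db2} with $K_0=K_0(C_0,n)$. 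What remains is a system of $n$ near-equalities forcing the scaled difference $qq'\bigl(\vv f(\vv x)-\vv f(\vv x')\bigr)$ to be a point of $\Z^{n}-(q'-q)\bm\theta$ of norm $\ll\psi^{-(n-1)}$ lying in a thin neighbourhood of the tangent direction $\vv f'(\vv x)$; a lattice-point count, using that the one-parameter family of tangent directions to $\cM_{\bm f}$ is itself curved (once more by the $l$-non-degeneracy on the shrunken $\cU$), bounds the number of such pairs by $O(\#\cR)$. This is the main obstacle: for non-analytic $\vv f$ one cannot invoke power-series or harmonic-analytic identities, and the errors produced by working with the finite-order osculating polynomial are exactly what pin the admissible $\psi$ to \eqref{theo_general_psi_db2}. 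The remaining hypotheses play their standard roles: $\psi<1$ keeps us in the genuinely near-$\cM_{\bm f}$ regime; $Q\ge Q_B$ absorbs boundary effects of $B$ and legitimises the counting at scale $|B|$; and the shrinking of $\cU$ upgrades non-degeneracy at $x_0$ to uniform $l$-non-degeneracy throughout $\cU$, as the polynomial-approximation argument requires.
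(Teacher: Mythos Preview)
Your outline has a genuine gap at its core: the lower bound $\#\cR(Q,\psi,B,\bm\theta)\gg\psi^{n-1}Q^2|B|$ that you feed into the Cauchy--Schwarz argument is not an input you can take for granted --- it is essentially the content of Corollary~\ref{cor1}, which the paper \emph{derives from} Theorem~\ref{t3}, not the other way round. Your proposed route to this lower bound (Taylor-expand $\vv f$ on short subintervals to its degree-$l$ osculating polynomial, then ``estimate explicitly'' for the resulting Veronese-type curve by a mean-value/Weyl argument) is not substantiated: no such explicit count is available for polynomial curves in $\R^n$ with $n\ge3$, and establishing it would itself be a theorem of the same strength. The same circularity afflicts your Vitali alternative, which assumes the lower count holds uniformly down to intervals of length $\asymp\rho$ --- a strictly stronger statement than what is being proved. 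Your upper bound on close pairs is likewise only a sketch; the lattice-point count ``using that the family of tangent directions is itself curved'' is the kind of statement that, for $n\ge3$ and non-analytic $\vv f$, nobody knows how to make precise without machinery comparable to what the paper introduces.

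The paper's method is entirely different and bypasses counting altogether. It introduces an auxiliary matrix $G(x)$ (equation~\eqref{subsection_CAcurve_def_gG}) and a set $\cG(c,Q,\psi)$ defined by a lattice condition $\delta(g^{-1}G(x)\Z^{n+1})\ge1$. Minkowski's theorem on successive minima (Lemma~\ref{lemma_translation}) shows that every $x\in\cG(c,Q,\psi)$ lies near a shifted rational point of the required quality, so $\cG(c,Q,\psi)\cap B^\rho\subset\Delta$. The remaining task --- showing $|B\setminus\cG(c,Q,\psi)|\le\kappa|B|$ --- is handled by the Kleinbock--Margulis quantitative non-divergence estimate (Theorem~KM), whose hypotheses are verified by an explicit analysis of the minors $\phi_{I,\Gamma}(x)=\det(G_I(x)\Gamma)$ (Proposition~\ref{proposition_curve_Ca_goodness_2}). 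This is the ``new explicit method'' of the abstract: the point is precisely that these minors can be reduced, via the change of basis $\xi_x$ in~\eqref{subsection_CAcurve_def_det_additional_matrix}, to functions in the families $\cF$, $\cF'$, $\cG$ of \S4, whose $(C,\alpha)$-goodness follows from finite-order non-degeneracy alone --- no analyticity needed. The threshold $\psi\gg Q^{-3/(2n-1)}$ does arise from a second-order Taylor remainder (compare your computation with the final step of the proof of Lemma~\ref{lemma_translation}), but in the paper it governs the passage from the linearised inequality~\eqref{lemma_translation_intermediate_one} to the genuine condition~\eqref{lemma_translation_conclusion_f}, not a close-pairs estimate.
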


The following desired counting result is an immediate consequence of the theorem.

\begin{corollary}\label{cor1}
Assuming $\bm\theta$, $\cU$, $\bm f$, $x_0$, $B$, $C_0$, $\psi$ and $Q$ are the same as in Theorem~\ref{t3}, we have that
\begin{equation}\label{lb}
\#\cR(Q,\psi,B,\bm\theta) \ \ge \  \frac{|B|}{4C_0}~\psi^{n-1}  Q^{2}  \, .
\end{equation}
\end{corollary}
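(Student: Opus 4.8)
The plan is to deduce Corollary~\ref{cor1} directly from the measure bound \eqref{theo_general_result_two} of Theorem~\ref{t3} by a one-line covering argument. Recall that, by \eqref{def_big_Delta}, the set $\Delta(Q,\psi,B,\bm\theta,\rho)$ is the union of the balls $B(\alq,\rho)$ taken over the triples $(q,\vv a,\vv b)\in\cR(Q,\psi,B,\bm\theta)$. Since we are in the one-variable setting (so these balls lie in $\R^1$ and are taken with respect to the supremum norm), each $B(\alq,\rho)$ is an interval of Lebesgue measure exactly $2\rho$.

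First I would note that $\cR(Q,\psi,B,\bm\theta)$ is finite: the denominator $q$ ranges over the bounded interval $(\tfrac12Q,Q]$, and for each such $q$ the constraint $\alq\in B$ together with $\|q\bm f(\alq)-\bm\gamma-\vv b\|_\infty<\psi<1$ leaves only finitely many choices for $\vv a$ and $\vv b$. Then finite subadditivity of Lebesgue measure applied to \eqref{def_big_Delta} gives
\[
|\Delta(Q,\psi,B,\bm\theta,\rho)|\ \le\ \sum_{(q,\vv a,\vv b)\in\cR(Q,\psi,B,\bm\theta)}\bigl|B(\alq,\rho)\bigr|\ =\ 2\rho\cdot\#\cR(Q,\psi,B,\bm\theta).
\]
Combining this with the lower bound $|\Delta(Q,\psi,B,\bm\theta,\rho)|\ge\tfrac12|B|$ from \eqref{theo_general_result_two} yields $\#\cR(Q,\psi,B,\bm\theta)\ge |B|/(4\rho)$, and substituting the value $\rho=C_0/(\psi^{n-1}Q^2)$ from \eqref{def_rho} produces exactly \eqref{lb}. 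The hypotheses of Theorem~\ref{t3} (including the range \eqref{theo_general_psi_db2} for $\psi$ and the lower bound $Q\ge Q_B$) are inherited verbatim, so nothing further needs to be checked.

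I do not expect any genuine obstacle here: all the substance lies in Theorem~\ref{t3}, and the corollary is merely the elementary observation that a set covered by $N$ intervals each of length $2\rho$ has measure at most $2\rho N$. The only point requiring a moment's care is the normalisation convention — that balls are taken in the supremum norm in $\R^d$ with $d=1$, so that each $B(\alq,\rho)$ has measure $2\rho$ (rather than $(2\rho)^d$ for a larger $d$) — but this is forced by the standing assumption that $\bm f$ is a map of one real variable.
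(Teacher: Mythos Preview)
Your argument is correct and is precisely the standard covering argument intended here; the paper itself does not spell out a proof but simply refers to Corollary~1.5 of \cite{Beresnevich-SDA1}, whose proof is the same subadditivity-of-measure computation you wrote down.
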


The proof of Corollary~\ref{cor1} is the same as that of Corollary~1.5 in \cite{Beresnevich-SDA1}.

\begin{remark}
The constant $C_0$ appearing in the above statements will be defined within \eqref{Crho} below and can be expressed explicitly in terms of certain parameters associated with $\vv f$ and $x_0$.
\end{remark}

\begin{remark}
Lower and matching upper bounds for rational points near non-degenerate planar curves can be found in \cite{Beresnevich-Dickinson-Velani-07:MR2373145, Beresnevich-Vaughan-Velani-11:MR2777039, BZ2010,  Chow, Huang2015, Gafni}.
In the homogeneous case (i.e. when $\bm\theta = \bm0$), the lower bound  given by  \eqref{lb} is established  in~\cite{Beresnevich-SDA1} for analytic non-degenerate curves embedded in $\R^n$.
The key outcome of this paper is thus the removal of the analytic assumption, which is done upon introducing a new technique for detecting rational points near manifolds. This technique enables us to perform explicit analysis of certain conditions within the so-call Quantitative Non-Divergence estimate  of Kleinbock and Margulis (see Section \ref{QND} below)  that underpins the proof of our main result.
\end{remark}

\subsection{Simultaneous Diophantine approximation on manifolds}\label{applicitons}

Given a function  $\psi: (0,+\infty) \to (0,+\infty) $ and   a point   $\bm\theta =(\theta_1,\ldots,\theta_n) \in \R^n$,  let  $\cS_n(\psi,\bm\theta)$ denote the set of $\vv y=(y_1,\dots,y_n)\in\R^n$ for which  there exists  infinitely many
$(q,p_1,\dots,p_n)\in\N\times\Z^n$ such that
$$
\max_{1\le i\le n}|q y_i- \theta_i-p_i|<\psi(q) \, .
$$
If $\bm\theta =\vv0$ then the corresponding set  $\cS_n(\psi):=\cS_n(\psi,\bm0)$ is the usual homogeneous set of simultaneously $\psi$-approximable points in $\R^n$. In the case $\psi$ is $\psi_{\tau}:r\to r^{-\tau}$ with $\tau>0$, let us write
$\cS_n(\tau,\bm\theta)$  for $\cS_n(\psi,\bm\theta)$ and $\cS_n(\tau)$  for $\cS_n(\tau,\bm0)$. Recall that, by Dirichlet's theorem, $\cS_n(\tau)= \R^n $ for $\tau \leq 1/n$.

As an application of our main result (Theorem~\ref{t3}) we have the following statement concerning the `size' of the set of simultaneously $\psi$-approximable points restricted to lie on a curve in $\R^n$.

\begin{theorem}\label{t1}
Let $\bm\theta \in \R^n$ and  $\psi: (0,+\infty) \to (0,+\infty) $ be any monotonic function such that $q\psi(q)^{(2n-1)/3}\to\infty$ as $q\to\infty$. Let $\cM$ be any non-degenerate curve in $\R^n$. Then for any $s\in\R$ satisfying $\tfrac12 < s \le 1 $  we have that
\begin{equation*}
\hs \big( \cS_n(\psi,\bm\theta)\cap\cM \big) = \cH^s(\cM)   \qquad \text{when} \qquad  \sum_{q=1}^{\infty}  q^n\left( \textstyle{\frac{\psi(q)}{q}}  \right)^{s+n-1}\ = \ \infty \, .
\end{equation*}
In particular, if
$$
\tau(\psi):=\liminf_{q\to\infty}\frac{-\log \psi(q)}{\log q}\,,
$$
the lower order of $1/\psi$ at infinity, satisfies the inequalities $n\le \tau(\psi)<3/(2n-1)$,
then
\begin{equation}\label{e:135-}
\dim \big(\cS_n(\psi,\bm\theta)\cap\cM\big) \ge \frac{n+1}{\tau(\psi)+1}-n+1 .
\end{equation}
\end{theorem}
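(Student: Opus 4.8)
The plan is to derive the Hausdorff-measure divergence statement from Theorem~\ref{t3} via the ubiquitous systems framework (as in the Khintchine--Jarn\'ik machinery of \cite{Beresnevich-Dickinson-Velani-07:MR2373145}), and then extract the Hausdorff-dimension bound \eqref{e:135-} as a corollary by choosing $s$ appropriately. First I would reduce to a local statement: since non-degeneracy is a local condition and $\cH^s$ is countably stable, it suffices to work on a small interval $\cU$ around a point $x_0$ where the curve is non-degenerate, parametrised in Monge form $x\mapsto\vv f(x)=(x,\mv f(x))$, and to prove the full-measure/divergence statement for $\cS_n(\psi,\bm\theta)\cap\vv f(\cU)$ in the parameter space; pulling back $\cH^s(\cM)$ to $\cH^s$ on the interval is harmless because $\vv f$ is bi-Lipschitz on $\cU$. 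Restricting to denominators in dyadic ranges $(\tfrac12 Q,Q]$ with $Q=Q_j$ a rapidly growing sequence, a point $x$ lies in $\cS_n(\psi,\bm\theta)\cap\vv f(\cU)$ (for that range of $q$) essentially when $x\in B\big(\tfrac{a+\lambda_1}{q},\,\tfrac{\text{something}}{qQ}\big)$ for some $(q,\vv a,\vv b)\in\cR(Q,\tfrac12\psi,B,\bm\theta)$ — i.e. when $x$ is covered by one of the shrunk balls appearing in $\Delta$.

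The heart of the argument is to show that the family of shifted rational points $\{\tfrac{\vv a+\bm\lambda}{q}\}$ with $\tfrac12 Q<q\le Q$, together with the ``ubiquity radius'' $\rho=C_0/(\psi^{n-1}Q^2)$, forms a \emph{locally ubiquitous system} relative to the function $Q\mapsto\rho(Q)$. This is precisely what Theorem~\ref{t3} delivers: it says that for $\psi$ as large as $K_0Q^{-3/(2n-1)}$ the union of balls of radius $\rho$ around these rational points covers at least half of every subinterval $B\subset\cU$. To invoke the standard divergence half of the Jarn\'ik-type theorem for ubiquitous systems (e.g. the Mass Transference / ubiquity theorem of Beresnevich--Dickinson--Velani), I would set the approximating radius at scale $Q$ to be $\Psi(Q)=\psi(Q)/Q$ — here I choose a $\psi$ with $\psi(Q)\approx\psi(q)$ on the dyadic block, using monotonicity — and check the divergence of the relevant sum. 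The ubiquity theorem then gives $\cH^s\big(\limsup \text{balls}\big)=\cH^s(\cU)$ whenever
\begin{equation*}
\sum_Q \left(\frac{\Psi(Q)}{\rho(Q)}\right)^{?}\rho(Q)^{\,?}\;=\;\infty,
\end{equation*}
and a direct computation converts this, via a dyadic-to-monotone-sum comparison (Cauchy condensation), into the stated series $\sum_q q^n\big(\psi(q)/q\big)^{s+n-1}=\infty$. The condition $q\psi(q)^{(2n-1)/3}\to\infty$ is exactly what guarantees $\psi(Q)\ge K_0 Q^{-3/(2n-1)}$ for large $Q$ along the dyadic blocks, so Theorem~\ref{t3} applies on every block; the restriction $\tfrac12<s\le1$ is forced because the curve is one-dimensional (so $\cH^s(\cM)$ is finite and positive only for $s\le1$) and because the ubiquity argument in dimension one requires $s>\tfrac12$ to control the local scaling of $\cH^s$ against the planar arrangement of rational points.

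For the dimension bound \eqref{e:135-}, I would take $\psi=\psi_\tau$ essentially (or use the hypothesis $n\le\tau(\psi)<3/(2n-1)$ to sandwich $\psi$), and choose $s$ to be the critical exponent making the series $\sum_q q^n(q^{-\tau-1})^{s+n-1}=\sum_q q^{n-(\tau+1)(s+n-1)}$ diverge, i.e. $n-(\tau+1)(s+n-1)\ge-1$, which gives $s\le\frac{n+1}{\tau+1}-n+1$; since $\tau\ge n$ forces this value of $s$ to lie in $(\tfrac12,1]$ (one checks $\frac{n+1}{\tau+1}-n+1\le1$ iff $\tau\ge n$, and $>\tfrac12$ iff $\tau<3/(2n-1)$... wait, rather one checks it stays above $\tfrac12$ on the given range), the measure statement applies with that $s$ and yields $\dim\ge s=\frac{n+1}{\tau(\psi)+1}-n+1$ by the definition of Hausdorff dimension. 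The main obstacle I anticipate is not the ubiquity input — that is handed to us cleanly by Theorem~\ref{t3} — but the bookkeeping in passing between the ``per-denominator'' radius $\psi(q)/Q$ that appears in the genuine $\limsup$ set $\cS_n(\psi,\bm\theta)\cap\cM$ and the dyadic-block radius used in the ubiquity statement, together with verifying that the general ubiquity theorem's side conditions (monotonicity of $\rho$, the regularity assumption on the scaling function, and the fact that the shifted rationals $\tfrac{\vv a+\bm\lambda}{q}$ do not concentrate too much so that the $\limsup$ is genuinely $s$-dimensionally large) are all met; these are technical but standard, and the inhomogeneous shift $\bm\theta$ does not cause extra trouble because Theorem~\ref{t3} is already stated with $\bm\theta$ built in.
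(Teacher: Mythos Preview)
Your proposal is correct and follows essentially the same route as the paper: the paper explicitly states that Theorem~\ref{t3} establishes the required local ubiquity of the shifted rational points, and that the deduction of Theorem~\ref{t1} then follows the ubiquity blueprint of \cite[pp.~196--199]{Beresnevich-SDA1} (generalising \cite[\S7]{Beresnevich-Dickinson-Velani-07:MR2373145}), with the inhomogeneous shift causing no new difficulties---exactly the plan you outline. One small slip: in your dimension computation you write ``$\frac{n+1}{\tau+1}-n+1\le1$ iff $\tau\ge n$'', but the correct threshold is $\tau\ge 1/n$; this is harmless for the argument and you evidently suspected the arithmetic needed rechecking.
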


\bigskip

\begin{remark}
In the case $s<d$ we have that $\cH^s(\cM)=\infty$ and thus Theorem~\ref{t1} represents an analogue of Jarn\'\i k's theorem \cite{Jarnik}. When $s=d$, Theorem~\ref{t1} represent an analogue of Khintchine's theorem \cite{Khin26} for curves.
\end{remark}

\begin{remark}
Theorem~\ref{t1} was previously proved for planar curves, see \cite[Theorem~3]{Beresnevich-Dickinson-Velani-07:MR2373145}, \cite[Theorem 1]{Beresnevich-Vaughan-Velani-11:MR2777039} and \cite[Theorem~4]{BZ2010}.  For $n>2$, Theorem~\ref{t1} was previously established in the homogeneous case for non-degenerate curves that are additionally assumed to be analytic \cite[Theorem~7.2]{Beresnevich-SDA1}.
 Most recently, it was proved in \cite{Beresnevich-Lee-Vaughan-Velani-17}, that if the stronger inequality  $n\le \tau(\psi)<1/(n-1)$ holds and the upper order of $1/\psi$ equals the lower of order of $1/\psi$;  namely that $$
\limsup_{q\to\infty}\frac{-\log \psi(q)}{\log q}=\liminf_{q\to\infty}\frac{-\log \psi(q)}{\log q}\,,
$$
then the  lower bound dimension statement  \eqref{e:135-} is valid in the homogeneous case for arbitrary $C^2$ curves (including degenerate ones) in $\R^n$. To date, the complementary convergence theory for curves is only known in full when $n=2$ -- see  \cite{Vaughan-Velani-06:MR2242634}.  For submanifolds of $\R^n$ of dimension $\ge2$,  see \cite{Beresnevich-Vaughan-Velani-Zorin}, \cite{Huang2}, \cite{Simmons} and references within for various convergence results. For a general background to previous results and what one expects to be able to prove, see \cite[\S1.6]{VicFel}.
\end{remark}

\begin{remark}
Theorem~\ref{t1} can be extended to submanifolds of $\R^n$ of any dimension by making use of a slicing technique due essentially to  Pyartli~\cite{Pyartly-1969}. Indeed, by using this technique one can see that if a submanifold $\cM$ of $\R^n$ of any dimension admits a fibering into non-degenerate curves, then Theorem~\ref{t1} extends to such a manifold. Naturally, one example of a class of manifolds admitting a fibering into non-degenerate curves is analytic manifolds (see the Fibering Lemma in~\cite{Beresnevich-SDA2}). There are of course classes of non-analytic manifolds that admit fibering into non-degenerate curves -- see \cite{Pyartly-1969} for concrete examples.
\end{remark}

In short, Theorem~\ref{t3} establishes a ubiquitous system of shifted rational points \eqref{points} near $\cM_{\mv f}$. Ubiquity \cite{Beresnevich-Dickinson-Velani-06:MR2184760} is a well developed mechanism for proving divergence statements such as Theorem~\ref{t1} above. In particular, the deduction of Theorem~\ref{t1} from Theorem~\ref{t3} follows  the blue print  presented in \cite[pp.196--199]{Beresnevich-SDA1} that generalises the `planar' arguments in \cite[\S7]{Beresnevich-Dickinson-Velani-07:MR2373145} to higher dimensions. The necessary modifications necessary (for proving Theorem~\ref{t1} from Theorem~\ref{t3}) are obvious and essentially account for the shift in the numerators of the rational points to reflect the inhomogeneous nature of the problem under consideration.  The details are left to the reader and thus the rest of this paper is devoted to the proof of Theorem~\ref{t3}.

\section{Detecting rational points near manifolds}
\label{detect}

In this section we introduce an alternative to the method of \cite{Beresnevich-SDA1} for detecting rational points near manifolds. As before, we assume that $\cM$ is given by its Monge parameterisation \eqref{monge}.
In this section we will be making no assumptions about the dimension $d$ of $\cM$.
Without loss of generality we will assume that there exists a constant $M>0$ such that
\begin{equation} \label{lemma_translation_M}
\max_{1\le k\le m}\,\max_{1\le i,j\le d}\,\,\sup_{\vv x\in \cU}\,\left|\frac{\partial^2 f_k(\vv x)}{\partial x_i\partial x_j}\right|\leq M\,.
\end{equation}
Define the following $m$ auxiliary functions of $\vv x=(x_1,\dots,x_d)$\,:
\begin{equation}\label{g_j}
g_j:=f_j-\sum_{i=1}^dx_i\frac{\partial f_j}{\partial x_i}\qquad\quad(1\le j\le m)
\end{equation}
and the following $(n+1)\times(n+1)$ matrix

\begin{equation} \label{section_DP_def_gG}
G=G({\vv x}):=\left(
\begin{array}{cccccccc}
g_1&\dfrac{\partial f_1}{\partial x_1} & \dots & \dfrac{\partial f_1}{\partial x_d}& -1&0&\dots& 0\\[2ex]
\vdots&\vdots&\ddots&\vdots&\vdots&&\ddots&\vdots\\[1ex]
g_m&\dfrac{\partial f_m}{\partial x_1} & \dots & \dfrac{\partial f_m}{\partial x_d}& 0&0&\dots& -1\\[2ex]
x_1&-1 & \dots & 0& 0&0&\dots& 0\\[1ex]
\vdots&\vdots & \ddots &\vdots & \vdots&\vdots&& \vdots\\[1ex]
x_d&0 & \dots &-1 & 0&0&\dots& 0\\[1ex]
1&0 & \dots & 0 & 0&0&\dots& 0
\end{array}
\right)\,.
\end{equation}

\noindent Next, given positive $c,Q,\psi$, let
\begin{equation} \label{section_DP_def_g}
g=g(c,Q,\psi):=\diag\Big\{\underbrace{\psi,\dots,\psi}_m,\underbrace{(\psi^m Q)^{-1/d},\dots,
(\psi^m Q)^{-1/d}}_d,c Q\Big\}
\end{equation}
be a diagonal matrix. Finally, define the set
\begin{equation} \label{def_Bdelta}
\cG(c,Q,\psi):=\Big\{\vv x\in \cU:\delta\big(g^{-1}G({\vv x})\Z^{n+1}\big)\ge 1\Big\}\,,
\end{equation}
where for a given lattice $\Lambda\subset\R^{n+1}$
\begin{equation}\label{vb0}
\delta\big(\Lambda\big):=\inf_{\vv v\in\Lambda\setminus\{\vv0\}}\|\vv v\|_\infty\,.
\end{equation}
Given a set $S\subset\R^d$ and a real number $\rho>0$, $S^\rho$ will denote its `$\rho$-interior'; that is the set of  $\vv x\in S$ such that $B(\vv x,\rho)\subset S$.

\begin{lemma} \label{lemma_translation}
Let $Q,\psi>0$ be given and satisfy the following inequality
\begin{equation} \label{section_DP_psi_is_big}
\psi\ge Q^{-\frac{d+2}{2m+d}}\,.
\end{equation}
Let $\cU$ be a ball in $\R^d$ and let $\mv f=(f_1,\dots,f_m):\cU\to\R^m$ be a $C^2$ map such that \eqref{lemma_translation_M} is satisfied for some $M>0$. Let $c>0$, $\bm\theta:=(\bm\lambda,\bm\gamma) \in \R^d \times \R^m$ and
\begin{equation}\label{rho}
\rho:=\frac{1}{2c}\,(\psi^m Q^{d+1})^{-1/d}\,.
\end{equation}
Then, for any ${\vv x}=(x_1,\dots,x_d)\in\cG(c,Q,\psi)\cap\cU^\rho$
there exists an integer point $(q,a_1,\dots,a_d,b_1,\dots,b_m)\in\Z^{n+1}$ such that
\begin{align}
&2(n+1)Q\,\,<\,\,q \,\, <\,\, 4(n+1)Q, \label{lemma_translation_conclusion_q}\\[2ex]
&|qx_i-a_i-\lambda_i|\,\,<\,\,\frac{n+1}{c}\,(\psi^m Q)^{-1/d}\label{lemma_translation_conclusion_x}\hspace*{8.2ex}(1\le i\le d),
\end{align}
and
\begin{align}
&\label{lemma_translation_conclusion_f}
\left|qf_j\left(\frac{a_1+\lambda_1}q,\dots, \frac{a_d+\lambda_d}q\right)-b_{j}-\gamma_{j}\right|\\[1ex]
&\nonumber\hspace*{13.5ex}\,\,<\,\,\left(1+\frac{Md^2}{2c}\right)\frac{n+1}{c}\,\psi\qquad (1\le j\le m).
\end{align}
\end{lemma}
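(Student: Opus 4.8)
My plan is to read the statement off the lattice $\Lambda:=g^{-1}G(\vv x)\Z^{n+1}$, using the hypothesis $\vv x\in\cG(c,Q,\psi)$ only through the fact that $\delta(\Lambda)\ge1$, that is, the shortest nonzero vector of $\Lambda$ has sup-norm at least $1$. First I would record two routine facts about $\Lambda$. Since the last column of $G(\vv x)$ in \eqref{section_DP_def_gG} is $\pm$ a coordinate vector and the bottom-left entry equals $1$, a cofactor expansion gives $|\det G(\vv x)|=1$, while from \eqref{section_DP_def_g} one has $\det g=\psi^m\cdot(\psi^mQ)^{-1}\cdot cQ=c$; hence $\det\Lambda=|\det g^{-1}G(\vv x)|=1/c$. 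Combining $\delta(\Lambda)=\lambda_1(\Lambda)\ge1$ with the second theorem of Minkowski on successive minima applied to the unit cube $[-1,1]^{n+1}$ (of volume $2^{n+1}$) yields $\prod_{i=1}^{n+1}\lambda_i(\Lambda)\le\det\Lambda=1/c$, and since $\lambda_i(\Lambda)\ge\lambda_1(\Lambda)\ge1$ for every $i$ this forces $\lambda_i(\Lambda)\le1/c$ for all $i$. Choosing linearly independent $\vv v_1,\dots,\vv v_{n+1}\in\Lambda$ with $\|\vv v_i\|_\infty\le\lambda_i(\Lambda)$ and reducing an arbitrary vector of $\R^{n+1}$ modulo $\Z\vv v_1+\dots+\Z\vv v_{n+1}\subseteq\Lambda$ shows that the sup-norm covering radius of $\Lambda$ is at most $\tfrac12\sum_{i=1}^{n+1}\lambda_i(\Lambda)\le\tfrac{n+1}{2c}$.

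This covering estimate is what lets us handle the shift $\bm\theta$, although $G$ does not involve it: a lattice of this small covering radius meets the cube $\vv z+[-\tfrac{n+1}{2c},\tfrac{n+1}{2c}]^{n+1}$ for every $\vv z$. Accordingly, set $\vv z:=g^{-1}\vv t$, where
\[
\vv t:=\Big(\gamma_1-\textstyle\sum_{i=1}^d\lambda_i\tfrac{\partial f_1}{\partial x_i}(\vv x),\dots,\gamma_m-\sum_{i=1}^d\lambda_i\tfrac{\partial f_m}{\partial x_i}(\vv x),\ \lambda_1,\dots,\lambda_d,\ 3(n+1)Q\Big)\in\R^{n+1}.
\]
By the covering bound there is $\vv k=(q,a_1,\dots,a_d,b_1,\dots,b_m)\in\Z^{n+1}$ with $\|g^{-1}G(\vv x)\vv k-\vv z\|_\infty\le\tfrac{n+1}{2c}$. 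Using the explicit block shapes of $G$ and $g$, the $(n+1)$st coordinate of this inequality reads $|q-3(n+1)Q|\le\tfrac12(n+1)Q$, so in particular $2(n+1)Q<q<4(n+1)Q$, which is \eqref{lemma_translation_conclusion_q}; the middle $d$ coordinates read $(\psi^mQ)^{1/d}|qx_i-a_i-\lambda_i|\le\tfrac{n+1}{2c}$, that is \eqref{lemma_translation_conclusion_x}; and the first $m$ coordinates give the linearised bound
\[
\Big|\,qg_j(\vv x)+\sum_{i=1}^d(a_i+\lambda_i)\tfrac{\partial f_j}{\partial x_i}(\vv x)-b_j-\gamma_j\,\Big|\ \le\ \tfrac{n+1}{2c}\,\psi\qquad(1\le j\le m).
\]

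It remains to deduce \eqref{lemma_translation_conclusion_f} from the linearised bound by a Taylor expansion. Put $\vv y:=\big(\tfrac{a_1+\lambda_1}{q},\dots,\tfrac{a_d+\lambda_d}{q}\big)$. From \eqref{lemma_translation_conclusion_x} and $q>2(n+1)Q$ we get $\|\vv y-\vv x\|_\infty<\tfrac1{2cQ}(\psi^mQ)^{-1/d}=\rho$, so the segment from $\vv x$ to $\vv y$ lies in $B(\vv x,\rho)\subseteq\cU$ because $\vv x\in\cU^\rho$. Expanding $f_j$ about $\vv x$ with second-order Lagrange remainder, writing $q(y_i-x_i)=a_i+\lambda_i-qx_i$ and using the definition \eqref{g_j} of $g_j$ to cancel the linear terms, one finds
\[
qf_j(\vv y)=qg_j(\vv x)+\sum_{i=1}^d(a_i+\lambda_i)\tfrac{\partial f_j}{\partial x_i}(\vv x)+qR_j,\qquad |R_j|\le\tfrac{M}{2}\Big(\sum_{i=1}^d|y_i-x_i|\Big)^2\le\tfrac{M}{2}\,d^2\rho^2,
\]
by \eqref{lemma_translation_M}. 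Since $q<4(n+1)Q$ and $\rho$ is given by \eqref{rho}, a short computation gives $|qR_j|<\tfrac{(n+1)Md^2}{2c^2}\,\psi^{-2m/d}Q^{-(d+2)/d}$; the point is that the hypothesis \eqref{section_DP_psi_is_big}, $\psi\ge Q^{-(d+2)/(2m+d)}$, is exactly equivalent to $\psi^{-2m/d}Q^{-(d+2)/d}\le\psi$, so $|qR_j|\le\tfrac{Md^2}{2c}\cdot\tfrac{n+1}{c}\,\psi$. Adding this to the linearised bound and using $\tfrac{n+1}{2c}<\tfrac{n+1}{c}$ gives $\big|qf_j(\vv y)-b_j-\gamma_j\big|<\big(1+\tfrac{Md^2}{2c}\big)\tfrac{n+1}{c}\,\psi$, which is \eqref{lemma_translation_conclusion_f}. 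The step I expect to be the crux is turning the non-escape condition $\delta(\Lambda)\ge1$ into the inhomogeneous covering statement with the sharp constant $\tfrac{n+1}{c}$; after that the argument is bookkeeping, the only delicate point being the exact matching of the Taylor error with the lower threshold imposed on $\psi$.
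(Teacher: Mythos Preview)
Your argument is correct and follows essentially the same route as the paper: compute $\det\Lambda=c^{-1}$, use Minkowski's second theorem together with $\lambda_1\ge1$ to bound all successive minima by $c^{-1}$, reduce a well-chosen inhomogeneous target modulo (a sublattice of) $\Lambda$ to produce the integer point, and finish with the second-order Taylor expansion controlled exactly by \eqref{section_DP_psi_is_big}. The only cosmetic differences are that you package the reduction step as a covering-radius bound (yielding the slightly sharper $\tfrac{n+1}{2c}$) and specify the target $g^{-1}\vv t$ directly, whereas the paper writes $-g^{-1}G(\vv x)\bm\omega$ in coordinates with respect to $\vv v_1,\dots,\vv v_{n+1}$ and rounds; your use of the sublattice $\Z\vv v_1+\dots+\Z\vv v_{n+1}\subseteq\Lambda$ is in fact a shade more careful than the paper's appeal to a ``basis''.
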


\bigskip

\begin{remark}  \label{remSV}
The fact that $\vv x$ is restricted to lie in $\cU^\rho$ means that  $B(\vv x,\rho)\subset\cU$ and this ensures that the shifted rational point $\left(\frac{a_1+\lambda_1}q,\dots, \frac{a_d+\lambda_d}q\right)$ lies in $\cU$. Indeed, once \eqref{lemma_translation_conclusion_q} and \eqref{lemma_translation_conclusion_x} are met, the associated shifted rational  point lies in  $B(\vv x,\rho)$ and hence in $\cU$. It is not difficult to see from \eqref{section_DP_psi_is_big} that $\rho\to0$ as $Q\to\infty$ uniformly in $\psi$ and thus considering points $\vv x$ lying in $ \cU^\rho$  rather than $\cU$ is not particularly restrictive.
\end{remark}

\begin{proof}
Fix any $\vv x\in \cG(c,Q,\psi)\cap\cU^\rho$ and consider the lattice
$$
\Lambda:=g^{-1}G({\vv x})\Z^{n+1}\,.
$$
Let $\mu_1,\dots,\mu_{n+1}$ be the successive Minkowski minima of $\Lambda$ with respect to the body
$$
B:=[-1,1]^{n+1}.
$$
By definition, $\mu_i$ is the infimum of all $x>0$ such that $\rank(\Lambda\cap xB)\ge i$, where $xB:=[-x,x]^{n+1}$. In particular, we have that $\mu_1\le\ldots\le\mu_{n+1}$.
By Minkowski's theorem on successive minima, we have that
$$
\frac{2^{n+1}}{(n+1)!}\le \frac{\vol(B)}{{\rm covol}(\Lambda)}\prod_{i=1}^{n+1}\mu_i  \le 2^{n+1}\,.
$$
Observe, on using \eqref{section_DP_def_gG} and \eqref{section_DP_def_g},  that the covolume of $\Lambda$ is $c^{-1}$ and that the volume of $B$ is $2^{n+1}$. Hence
$$
c\,\prod_{i=1}^{n+1}\mu_i  \le 1\,.
$$
Further, by the assumption that $\vv x\in \cG(c,Q,\psi)$, we have that $\mu_1\ge1$. This follows form \eqref{def_Bdelta}. Hence,
$$\mu_{n+1}\le c^{-1}\prod_{i=1}^{n}\mu_i^{-1}\le c^{-1} \, .  $$ Therefore there exists a basis of $\Lambda$, say $\vv v_1,\dots,\vv v_{n+1}$, lying in $c^{-1}B$, that is
\begin{equation}\label{e1}
\|\vv v_i\|_\infty\le c^{-1}\qquad(1\le i\le n+1).
\end{equation}
Let
$$
\bm\omega:=(\omega_0,\omega_1,\dots,\omega_n)   \in \R^{n+1} \,,
$$
where
$$
\omega_0 :=3(n+1)Q\,,
$$
\begin{equation}\label{g2}
\omega_i:=\lambda_i+\omega_0x_i\qquad(1\le i\le d)
\end{equation}
and
\begin{equation}\label{g3}
\omega_{d+j}:=\gamma_j+\omega_0f_j(\vv x)\qquad(1\le j\le m)\,.
\end{equation}
Since $\vv v_1,\dots,\vv v_{n+1}$ are linearly independent, there exist unique real parameters $\eta_1,\dots,\eta_{n+1}$ such that
\begin{equation}\label{e2}
-g^{-1}  \, G({\vv x}) \, \bm\omega \, = \, \sum_{i=1}^{n+1}\eta_i\vv v_i\,.
\end{equation}
Let $t_1,\dots,t_{n+1}$ be any collection of integers, not all zeros, such that
\begin{equation}\label{g1}
|\eta_i-t_i|\le1 \qquad (1\le i\le n+1).
\end{equation}
The existence of such integers is obvious.
Define
$$
\vv v:=\sum_{i=1}^{n+1}t_i\vv v_i\,.
$$
Since the  $t_i$'s are integers and not all of them are zero, we have that $\vv v\in\Lambda\setminus\{\vv0\}$. Hence, by the definition of $\Lambda$, there exists a non-zero integer point $\vv p\in\Z^{n+1}$, which we will write as $(q,a_1,\dots,a_d,b_1,\dots,b_m)^t$, such that
$$
\vv v=g^{-1}G({\vv x})  \, \vv p\,.
$$
Then, using \eqref{e1}, \eqref{e2} and \eqref{g1}, we find  that

\begin{align}
\label{vb10}\left\|g^{-1}G({\vv x})(\vv p+\bm\omega)\right\|_\infty &= \
\left\|g^{-1}G({\vv x})\vv p+g^{-1}G({\vv x})\bm\omega\right\|_\infty\\[2ex]
\nonumber &= \ \left\|\vv v+g^{-1}G({\vv x})\bm\omega\right\|_\infty\\[2ex]
\nonumber &= \ \left\|\sum_{i=1}^{n+1}t_i\vv v_i-\sum_{i=1}^{n+1}\eta_i\vv v_i\right\|_\infty\\[2ex]
\nonumber & \ \le \sum_{i=1}^{n+1}|t_i-\eta_i|\cdot\left\|\vv v_i\right\|_\infty\\[2ex]
\nonumber & \ \le c^{-1}(n+1)\,.
\end{align}

\noindent Observe that the last coordinate of the vector
\begin{equation}\label{vb11}
g^{-1}G({\vv x})(\vv p+\bm\omega)
\end{equation}

\noindent is $(cQ)^{-1}(q+\omega_0)$, which by \eqref{vb10} is $\le c^{-1}(n+1)$ in absolute value. Hence
$|q+\omega_0|\le (n+1)Q$ and since $\omega_0:=3(n+1)Q$, inequalities \eqref{lemma_translation_conclusion_q} readily follow.

Furthermore, for $i\in\{1,\dots,d\}$ the $m+i$ coordinate of \eqref{vb11} is
\begin{align*}
(\psi^m Q)^{1/d}\Big((q+\omega_0)x_i-(a_i+\omega_i)\Big) & \stackrel{\eqref{g2}}{=}
(\psi^m Q)^{1/d}\Big((q+\omega_0)x_i-(a_i+\lambda_i+\omega_0x_i)\Big)\\[1ex]
&  \ \, = \ \, (\psi^m Q)^{1/d}(qx_i-a_i-\lambda_i)\,.
\end{align*}
By \eqref{vb10} again, we have that $|(\psi^m Q)^{1/d}(qx_i-a_i-\lambda_i)|\le c^{-1}(n+1)$, whence inequalities \eqref{lemma_translation_conclusion_x} follow.

It now  remains to verify \eqref{lemma_translation_conclusion_f}.  With this in mind,  for $j\in\{1,\dots,m\}$ the $j$-th coordinate of \eqref{vb11} equals
$$
\psi^{-1}\left((q+\omega_0)g_j(\vv x)+\sum_{i=1}^d(a_i+\omega_i)\frac{\partial f_j(\vv x)}{\partial x_i}-(b_j+\omega_{d+j})\right)
$$
and by \eqref{g2} and \eqref{g3} this is equivalent to
$$
\psi^{-1}\left((q+\omega_0)g_j(\vv x)+\sum_{i=1}^d(a_i+\lambda_i+\omega_0x_i)\frac{\partial f_j(\vv x)}{\partial x_i}-\Big(b_j+\gamma_{d+j}+\omega_0f_j(\vv x)\Big)\right)\,.
$$
Now on using the expression for $g_j(\vv x)$ from \eqref{g_j},  we can  simplify the above to
$$
\psi^{-1}\left(qg_j(\vv x)+\sum_{i=1}^d(a_i+\lambda_i)\frac{\partial f_j(\vv x)}{\partial x_i}-b_j-\gamma_j\right)\,.
$$
Once again, by \eqref{vb10} this is $\le c^{-1}(n+1)$ in absolute value and  so it follows that
$$
\left|qg_j(\vv x)+\sum_{i=1}^d(a_i+\lambda_i)\frac{\partial f_j(\vv x)}{\partial x_i}-b_j-\gamma_j\right|<c^{-1}(n+1)\psi\,.
$$
Using the expression for $g_j(\vv x)$ given by \eqref{g_j}, we obtain that
\begin{align}
\label{lemma_translation_intermediate_one}
\left|qf_j(\vv x)+\sum_{i=1}^d(a_i+\lambda_i-qx_i)\frac{\partial f_j(\vv x)}{\partial x_i}-
b_j-\gamma_j\right|<c^{-1}(n+1)\psi\,.
\end{align}

We are now ready to establish \eqref{lemma_translation_conclusion_f}.  As already mentioned in  Remark  \ref{remSV}, it follows via \eqref{lemma_translation_conclusion_q} and \eqref{lemma_translation_conclusion_x} that for any point  $\vv x\in \cU^\rho$
$$
\left(\tfrac{a_1+\lambda_1}{q},\dots,\tfrac{a_d+\lambda_d}{q}\right)\in\cU.
$$
Hence, on using Taylor's expansion to the second order followed by the triangle inequality, for any $j\in\{1,\dots,m\}$ we obtain that
\begin{align*}
\left|qf_j\left(\frac{a_1+\lambda_1}{q},\dots,\frac{a_d+\lambda_d}{q}\right)-b_j-\gamma_j\right|&\\[3ex]
 & \hspace*{-30ex} = \ \left|q\left(f_j(\vv x)+\sum_{i=1}^d\frac{\partial f_j(\vv x)}{\partial x_i}\left(\frac{a_i+\lambda_i}{q}-x_i\right)\right.\right. \\[2ex]
& \hspace*{-20ex} + \ \left.\left.\sum_{i,l=1}^d \frac{\partial^2f_j(\widetilde{\vv x})}{\partial x_{i}\partial x_{l}} \left(\frac{a_{i}+\lambda_{i}}{q}-x_{i}\right)\left(\frac{a_{l}+\lambda_l}{q} -x_{l}\right) \right)-b_j-\gamma_j\right| &\\[3ex]
 & \hspace*{-30ex}  \le \   \left|qf_j(\vv x)+\sum_{i=1}^d(a_i+\lambda_i-qx_i)\frac{\partial f_j(\vv x)}{\partial x_i}-b_j-\gamma_j\right|\\[2ex]
&\hspace*{-20ex} +  \  \  \left|\frac1q\sum_{i,l=1}^d \frac{\partial^2f_j(\widetilde{\vv x})}{\partial x_{i}\partial x_{l}} \left(a_{i}+\lambda_{i}-qx_{i}\right)\left(a_{l}+\lambda_l -qx_{l}\right) \right|
\end{align*}

\noindent This together with
\eqref{lemma_translation_M}, \eqref{lemma_translation_conclusion_x} and \eqref{lemma_translation_intermediate_one}, implies that

\begin{align*}
\left|qf_j\left(\frac{a_1+\lambda_1}{q},\dots,\frac{a_d+\lambda_d}{q}\right)-b_j-\gamma_j\right|&\\[3ex]
&  \hspace*{-14ex}  \le \ c^{-1}(n+1)\psi+\frac1qMd^2\left(c^{-1}(n+1)(\psi^m Q)^{-1/d}\right)^2\\[2ex]
   &  \hspace*{-15ex}   \   \stackrel{\eqref{lemma_translation_conclusion_q}}{\le} c^{-1}(n+1)\psi+\frac{Md^2\left(c^{-1}(n+1)(\psi^m Q)^{-1/d}\right)^2}{2(n+1)Q}\\[2ex]
&  \hspace*{-15ex}  \   \stackrel{\eqref{section_DP_psi_is_big}}{\le} \left(1+\frac{Md^2}{2c}\right)c^{-1}(n+1)\psi.
\end{align*}

\noindent This verifies \eqref{lemma_translation_conclusion_f} and thereby  completes the proof of the lemma.
\end{proof}

\medskip

We will make direct use of the following variant of  Lemma~\ref{lemma_translation}.

\medskip

\begin{corollary} \label{lemma_translation_two}
Let $c,M,\tilde Q,\tilde \psi>0$ be given such that
\begin{equation} \label{vb678}
\tilde \psi\ge K_0 \,\tilde Q^{-\frac{d+2}{2m+d}}\, \quad\text{with } \quad K_0\ge(4(n+1))^{\frac{d+2}{2m+d}}\left(1+\frac{Md^2}{2c}\right)\frac{n+1}{c}\,.
\end{equation}
Let $\cU$ be a ball in $\R^d$ and let $\mv f=(f_1,\dots,f_m):\cU\to\R^m$ be a $C^2$ map such that \eqref{lemma_translation_M} is satisfied. Let $\bm\theta=(\bm\lambda,\bm\gamma) \in \R^d \times \R^m$ and let
\begin{align}
\label{Q*}
& Q:=\frac{\tilde Q}{4(n+1)} \   ,   \qquad  \quad\psi:=\frac{\tilde \psi}{\left(1+\frac{Md^2}{2c}\right)\frac{n+1}{c}} \  \ ,\quad   \\[2ex]
&   \rho:=\frac{1}{2c}\,(\psi^m Q^{d+1})^{-1/d}=
C_0(\tilde \psi^m \tilde Q^{d+1})^{-1/d}  \nonumber
\end{align}
where
\begin{equation}\label{Crho}
C_0:=\frac{1}{2c}\,\left(\big(4(n+1)\big)^{d+1}\left(\Big(1+\frac{Md^2}{2c}\Big)\frac{n+1}{c}\right)^m\right)^{1/d}\,.
\end{equation}
Then for any ball $B\subset\cU$,  we have that
\begin{equation}\label{vb234}
\cG(c,Q,\psi)\cap B^\rho\subset\Delta(\tilde Q,\tilde \psi,B,\bm\theta,\rho)\,,
\end{equation}
where $\Delta(\tilde Q,\tilde \psi,B,\bm\theta,\rho)$ is defined as in \eqref{def_big_Delta} .
\end{corollary}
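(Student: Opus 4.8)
The plan is to show that Corollary \ref{lemma_translation_two} is essentially a repackaging of Lemma \ref{lemma_translation}: the rescaled parameters $Q$ and $\psi$ are defined precisely so that the output $(q,\vv a,\vv b)$ produced by the lemma, after dividing the numerators by $q$, becomes a point of $\cR(\tilde Q,\tilde\psi,B,\bm\theta)$, and then Remark \ref{remSV} identifies $\vv x$ as belonging to the ball $B(\alq,\rho)$ appearing in the definition of $\Delta$. So the first step is bookkeeping on the hypotheses. Given $\vv x\in\cG(c,Q,\psi)\cap B^\rho$, I would first check that the pair $(Q,\psi)$ satisfies the smallness condition \eqref{section_DP_psi_is_big}, i.e. $\psi\ge Q^{-(d+2)/(2m+d)}$. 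Substituting the definitions from \eqref{Q*}, this reduces to the inequality $\tilde\psi\ge K_0\tilde Q^{-(d+2)/(2m+d)}$ with exactly the lower bound on $K_0$ stipulated in \eqref{vb678}; this is a direct computation tracking the powers of $4(n+1)$ and of $(1+\tfrac{Md^2}{2c})\tfrac{n+1}{c}$. Note also that $B\subset\cU$ implies $B^\rho\subset\cU^\rho$, so $\vv x\in\cG(c,Q,\psi)\cap\cU^\rho$ and Lemma \ref{lemma_translation} applies.

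Next I would invoke Lemma \ref{lemma_translation} to obtain an integer point $(q,a_1,\dots,a_d,b_1,\dots,b_m)$ satisfying \eqref{lemma_translation_conclusion_q}--\eqref{lemma_translation_conclusion_f}, and translate each conclusion into the language of $\cR$ and $\Delta$. From \eqref{lemma_translation_conclusion_q} we get $2(n+1)Q<q<4(n+1)Q$, which by the definition $Q=\tilde Q/(4(n+1))$ gives $\tfrac12\tilde Q<q\le\tilde Q$ (the half-open form being harmless since one can shrink slightly, or simply note $q<\tilde Q$). From \eqref{lemma_translation_conclusion_x} and the Taylor estimate \eqref{lemma_translation_conclusion_f} we read off that the shifted rational point $\alq$ lies within $\|\cdot\|_\infty$-distance $\tfrac{n+1}{c}(\psi^m Q)^{-1/d}$ of $\vv x$ in the first $d$ coordinates and that $\|q\bm f(\alq)-\bm\gamma-\vv b\|_\infty<(1+\tfrac{Md^2}{2c})\tfrac{n+1}{c}\psi=\tilde\psi$, the last equality being exactly the definition of $\psi$ in \eqref{Q*}. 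Hence $(q,\vv a,\vv b)\in\cR(\tilde Q,\tilde\psi,B,\bm\theta)$, provided $\alq\in B$; this last containment follows because $\vv x\in B^\rho$ means $B(\vv x,\rho)\subset B$ and, by the displacement bound just mentioned together with the definition of $\rho$ in \eqref{Q*}, the point $\alq$ lies in $B(\vv x,\rho)$ — this is precisely the argument spelled out in Remark \ref{remSV}.

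The final step is to observe that the same displacement bound, read the other way, says $\vv x\in B(\alq,\rho)$, and therefore $\vv x\in\bigcup_{(q,\vv a,\vv b)\in\cR(\tilde Q,\tilde\psi,B,\bm\theta)}B(\alq,\rho)=\Delta(\tilde Q,\tilde\psi,B,\bm\theta,\rho)$. Since $\vv x\in\cG(c,Q,\psi)\cap B^\rho$ was arbitrary, this proves the claimed inclusion \eqref{vb234}. The only genuine content beyond routine substitution is verifying that the radius $\tfrac{n+1}{c}(\psi^m Q)^{-1/d}$ controlling the displacement of $\alq$ from $\vv x$ is at most $\rho=\tfrac{1}{2c}(\psi^m Q^{d+1})^{-1/d}$; rewriting, this is the requirement $(n+1)Q^{-(d+1)/d}\le\tfrac12$ after cancelling the common factor $\tfrac1c(\psi^m Q)^{-1/d}$... more carefully, $\tfrac{n+1}{c}(\psi^mQ)^{-1/d}\le\tfrac1{2c}(\psi^mQ^{d+1})^{-1/d}$ is equivalent to $2(n+1)\le Q^{((d+1)-1)/d}=Q$, i.e. $Q\ge 2(n+1)$, which holds for $\tilde Q$ large since $Q=\tilde Q/(4(n+1))$. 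I expect this inequality-chasing to be the main (mild) obstacle; everything else is a mechanical unwinding of definitions, and the expressions for $C_0$ in \eqref{Crho} and for $\rho$ in \eqref{Q*} are dictated by exactly this computation. I would also double-check the consistency of the two formulas for $\rho$ in \eqref{Q*}, namely $\tfrac{1}{2c}(\psi^mQ^{d+1})^{-1/d}=C_0(\tilde\psi^m\tilde Q^{d+1})^{-1/d}$, by substituting $\psi=\tilde\psi/((1+\tfrac{Md^2}{2c})\tfrac{n+1}{c})$ and $Q=\tilde Q/(4(n+1))$ and collecting the resulting constant — this is precisely what $C_0$ in \eqref{Crho} is defined to be.
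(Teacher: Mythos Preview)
Your overall strategy is exactly the paper's: reduce \eqref{vb678} to \eqref{section_DP_psi_is_big}, apply Lemma~\ref{lemma_translation}, and translate \eqref{lemma_translation_conclusion_q}--\eqref{lemma_translation_conclusion_f} into membership in $\cR(\tilde Q,\tilde\psi,B,\bm\theta)$ and then $\Delta$. The structure and the citation of Remark~\ref{remSV} are spot on.

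However, the explicit displacement check you give at the end is wrong on two counts that happen to cancel. First, \eqref{lemma_translation_conclusion_x} bounds $|qx_i-a_i-\lambda_i|$, not $|x_i-(a_i+\lambda_i)/q|$; the $\|\cdot\|_\infty$-distance from $\vv x$ to $\alq$ is therefore $\tfrac{1}{q}\cdot\tfrac{n+1}{c}(\psi^m Q)^{-1/d}$, not $\tfrac{n+1}{c}(\psi^m Q)^{-1/d}$ as you wrote. Second, your algebraic reduction of $\tfrac{n+1}{c}(\psi^mQ)^{-1/d}\le\tfrac{1}{2c}(\psi^mQ^{d+1})^{-1/d}$ to $2(n+1)\le Q$ is incorrect: dividing through correctly gives $2(n+1)Q\le 1$, which is false for large $Q$, so the inequality you wrote down simply does not hold. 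The correct argument --- and this is precisely what the paper does --- is to use the lower bound $q>\tfrac12\tilde Q=2(n+1)Q$ from \eqref{lemma_translation_conclusion_q} to get
\[
\left|x_i-\tfrac{a_i+\lambda_i}{q}\right|<\frac{1}{2(n+1)Q}\cdot\frac{n+1}{c}(\psi^mQ)^{-1/d}=\frac{1}{2c}(\psi^mQ^{d+1})^{-1/d}=\rho,
\]
with no need for any auxiliary largeness condition on $Q$. Once you insert the missing $1/q$ the proof goes through exactly as you planned.
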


\medskip

\begin{proof}
First, observe that \eqref{vb678} implies \eqref{section_DP_psi_is_big}. Then if $\vv x\in \cG(c,Q,\psi)\cap B^\rho$, it follows by Lemma~\ref{lemma_translation} that  there exists an integer point
  $(q,a_1,\dots,a_d,b_1,\dots,b_m)\in\Z^{n+1}$ satisfying \eqref{lemma_translation_conclusion_q}--\eqref{lemma_translation_conclusion_f}.
By \eqref{Q*}, condition \eqref{lemma_translation_conclusion_q} translates into $\tfrac12\tilde Q< q < \tilde Q$; condition \eqref{lemma_translation_conclusion_f} translates into $\|q \bm f\big(\alq\big) - \bm\gamma - {\mathbf b} \|_\infty < \tilde \psi$; and condition \eqref{lemma_translation_conclusion_x} together with the fact that $q>\tfrac12\tilde Q$ imply that $\vv x\in B(\alq,\rho)$. Thus, in view of \eqref{sv1} and \eqref{def_big_Delta}, where $Q$ and $\psi$ are replaced with $\tilde Q$ and $\tilde \psi$, to complete the proof of the corollary it remains to show that $\alq\in B$. This is trivially the case since  $\vv x\in B^\rho$ and $\vv x\in B(\alq,\rho)$.
\end{proof}

In the light of the above corollary our strategy for
proving Theorem~\ref{t3} will be to find a suitable constant $c>0$ and a ball $\cU$ centred at a given point $\vv x_0$ such that for any ball $B\subset\cU$ and any sufficiently large $Q \in \N$ the Lebesgue measure of $\cG(c,Q,\psi)\cap B^\rho$ is at least a constant times the Lebesgue measure of $B$. In this paper we establish precisely such a statement  in the case of non-degenerate curves.


\begin{theorem} \label{t4}
Let $\bm f=(f_1,\ldots,f_{n-1})$ be a map of one real variable such that $x\mapsto \vv f(x)=(x,\mv f(x))$ is non-degenerate at some point $x_0\in\R$. Fix any $\kappa>0$. Then, there exists a sufficiently small open interval $\cU$ centred at $x_0$ and constants $c,K_0>0$ such that for any subinterval $B\subset \cU$ there is a constant $Q_B$ depending on $n$, $\vv f$, $\kappa$ and $B$ only such that for any integer $Q\ge Q_B$ and any $\psi\in\R$ satisfying \eqref{theo_general_psi_db2} we have that
\begin{equation} \label{theo_general_result_two+}
|B\setminus\cG(c,Q,\psi)|\leq \ka\,|B|\,.
\end{equation}
\end{theorem}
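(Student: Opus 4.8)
The plan is to deduce the required bound from the Kleinbock--Margulis Quantitative Non-Divergence estimate \cite{Kleinbock-Margulis-98:MR1652916} (to be recalled in Section~\ref{QND}), applied to the $(n+1)\times(n+1)$ matrix-valued map $h(x):=g(c,Q,\psi)^{-1}G(x)$. Since by \eqref{def_Bdelta} one has $B\setminus\cG(c,Q,\psi)=\{x\in B:\delta(h(x)\Z^{n+1})<1\}$, that estimate controls $|B\setminus\cG(c,Q,\psi)|$ in terms of $|B|$ once its hypotheses are verified for $h$: for every primitive subgroup $\Gamma\le\Z^{n+1}$, represented by a decomposable $\vv w_\Gamma\in\bigwedge^{\rank\Gamma}\Z^{n+1}$, one needs (a) that $x\mapsto\|h(x)\vv w_\Gamma\|$ be $(C,\alpha)$-good on a fixed neighbourhood of $x_0$ with $C,\alpha>0$ absolute, and (b) a lower bound $\sup_x\|h(x)\vv w_\Gamma\|\ge\rho$ for a suitable $\rho>0$ and a suitable choice of the constant $c$ and of $\cU$. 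The parameters are produced in the order: first fix $\cU$ small about $x_0$ together with the constants $c,K_0>0$ that make (a) and (b) work; then, given $B\subset\cU$, take $Q_B$ large enough to absorb the finitely many ``$Q$ large'' conditions arising in (b).

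For (a) we use the explicit shape of $G$ in \eqref{section_DP_def_gG}: in the curve case $d=1$ only the first two columns of $G(x)$ are non-constant, carrying the functions $g_1,\dots,g_{n-1},x$ and $f_1',\dots,f_{n-1}'$ respectively, while $g_j=f_j-xf_j'$. Consequently every minor of $g^{-1}G(x)$, and hence every coordinate of the wedge $h(x)\vv w_\Gamma$, is an $\R$-linear combination --- with coefficients depending only on $\Gamma$ and on the constant entries of $g^{-1}$ --- of a fixed finite family of functions built polynomially out of $1,x,f_1,\dots,f_{n-1}$ and their first derivatives; so it lies in a fixed finite-dimensional space $\mathcal F$ of $C^{l-1}$ functions. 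Shrinking $\cU$ about $x_0$ and using non-degeneracy of $\vv f$ at $x_0$ (which prevents any nonzero element of $\mathcal F$ from being too flat near $x_0$), a compactness argument on $\mathcal F$ yields that every nonzero $\phi\in\mathcal F$ has a derivative of order at most a fixed $L=L(\vv f,x_0)$ that is bounded away from $0$ throughout $\cU$, uniformly for $\phi$ of unit norm in $\mathcal F$; the standard criterion for $(C,\alpha)$-goodness of $C^L$ functions with a non-vanishing derivative of bounded order then makes every such $\phi$ $(C,1/L)$-good on $\cU$ with uniform $C$. Since $(C,\alpha)$-goodness is stable under Euclidean norms of finite tuples and under scaling, (a) follows, with constants depending only on $n$, $\vv f$ and $x_0$.

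The crux is (b). Fix a primitive $\Gamma$ of rank $j$, $1\le j\le n$, with $\vv w_\Gamma=\vv p_1\wedge\cdots\wedge\vv p_j$, and bound $\sup_x\|h(x)\vv w_\Gamma\|$ from below by isolating, in each case, a single coordinate of $h(x)\vv w_\Gamma$ that is large at some point of $\cU$. The diagonal $g$ of \eqref{section_DP_def_g} has $n-1$ entries equal to $\psi$, one entry $(\psi^{n-1}Q)^{-1}$ and one entry $cQ$, so each coordinate of $h(x)\vv w_\Gamma$ is a minor of $G(x)$ multiplied by a product of $j$ of the numbers $\psi^{-1},\psi^{n-1}Q,(cQ)^{-1}$. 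One separates the directions in which $\Gamma$ is transverse to the coordinate flag cut out by $g$ --- where some minor carrying a favourable weight is either a nonzero integer or a non-constant polynomial in $1,x$, so that taking $c$ small and $Q\ge Q_B$ large forces the corresponding coordinate above $\rho$, the constraint \eqref{theo_general_psi_db2} being exactly what prevents the accompanying negative powers of $\psi$ from killing the gain --- from the ``aligned'' directions, in which the only non-vanishing minors come from the first two columns of $G(x)$ and, on using $g_j=f_j-xf_j'$ and the derivative relations among the rows of $G$, reduce to a nonzero function built from $1,x,f_1,\dots,f_{n-1}$ and their low-order derivatives along the curve; the polynomial deviation-from-hyperplanes property of non-degenerate curves (cf.~\cite[Lemma~1(c)]{B2002}), together with the compactness argument of step (a), then bounds such a function below in sup-norm on $\cU$ by a positive constant times the norm of its (integer, hence $\ge1$) coefficient vector --- after possibly shrinking $\cU$ so that $\vv f(\cU)$ avoids the finitely many rational points of small denominator that could otherwise spoil the estimate. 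Balancing the transverse and the aligned contributions over all ranks $j$ is precisely what fixes the admissible $c$ and $K_0$ and forces the range $\psi\ge K_0Q^{-3/(2n-1)}$ in \eqref{theo_general_psi_db2}; with (b) in force for a suitable $\rho>0$, the Quantitative Non-Divergence estimate yields \eqref{theo_general_result_two+}.

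The main obstacle is step (b): organising the case analysis over all ranks $j$ and all $\binom{n+1}{j}$ wedge coordinates so that in each case one provably exhibits a large coordinate, using nothing more than non-degeneracy --- a polynomial rate of deviation from hyperplanes --- rather than the algebraic Wronskian identities available for analytic curves in \cite{Beresnevich-SDA1}. This is the explicit analysis of the conditions within the Quantitative Non-Divergence estimate announced in the introduction, and it is this analysis that produces the admissible range \eqref{theo_general_psi_db2} for $\psi$. Step (a), while technically lighter once the two-non-constant-columns structure is noticed, also rests essentially on non-degeneracy, through the uniform $(C,\alpha)$-goodness of the finite-dimensional space $\mathcal F$ in the merely $C^l$ category.
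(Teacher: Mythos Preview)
Your overall strategy matches the paper's: apply Theorem~KM to $h(x)$ built from $g^{-1}G(x)$ and verify its two hypotheses. But step~(a) contains a genuine gap. You claim that non-degeneracy of $\vv f$ at $x_0$, together with compactness on the finite-dimensional space $\mathcal F$ of minors, forces every nonzero $\phi\in\mathcal F$ to have a derivative of order $\le L$ bounded away from zero, so that the standard criterion yields $(C,1/L)$-goodness. This fails for the minors involving \emph{both} non-constant columns of $G(x)$: those produce skew gradients such as $f_if_j'-f_jf_i'$, which are only $C^{l-1}$ when $\vv f$ is $C^l$, and $l$-non-degeneracy does \emph{not} prevent their first $l-1$ derivatives from vanishing at $x_0$. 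Concretely, for $\vv f(x)=(x,x^2,x^3)$ at $x_0=0$ (which is $3$-non-degenerate) the skew gradient of $(x^2,x^3)$ is $x^4$, whose $0$-, $1$- and $2$-jet at $0$ all vanish; for a $C^3$ perturbation of this curve the skew gradient is only guaranteed $C^2$, so no higher derivative is available and your criterion cannot be invoked. The paper avoids this entirely: multiplying by the unipotent matrix $\xi_x$ of \eqref{subsection_CAcurve_def_det_additional_matrix}, it identifies each $\phi_{I,\Gamma}$ explicitly as an element of $\cF$, of $\cF'$, or as $\tilde\nabla\vv g$ for some $\vv g\in\cG$ (Lemmas~\ref{lemma_curve_case_one}--\ref{corollary_curve_case_xhree}), and then invokes for the last case the nontrivial skew-gradient result of Bernik--Kleinbock--Margulis (Proposition~\ref{prop4.2}, i.e.\ Proposition~4.1 of \cite{Bernik-Kleinbock-Margulis-01:MR1829381}), which delivers $(C,\tfrac1{2l-1})$-goodness \emph{without} a non-vanishing derivative of $\tilde\nabla\vv g$. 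This is why the exponent is $\tfrac1{2l-1}$ rather than $\tfrac1l$.

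Your step~(b) is too schematic to evaluate, and the aside about ``avoiding rational points of small denominator'' has no analogue in the actual proof. What the paper does (Proposition~\ref{proposition_curve_Ca_goodness_2}(ii)) is show that for every $\Gamma$ the lower bound \eqref{p2} holds for some $I\subset\{1,\dots,\max\{r,n-1\}\}$; this specific restriction on $I$ is the point, since by \eqref{vb102} the weight $\Phi_I$ is then $\psi^{-r}$ (if $r\le n-1$) or $Q$ (if $r=n$), both large. Finally, with your choice $h=g^{-1}G$ one has $B\setminus\cG=\{x:\delta(h(x)\Z^{n+1})<1\}$, i.e.\ $\ve=1$ in Theorem~KM, and since that theorem requires $\rho\le1/(n+1)$ you cannot make $(\ve/\rho)^\alpha$ small; the paper instead rescales by $D=c^{1/(n+1)}I_{n+1}$ (see \eqref{h} and \eqref{vb2}), so that $\ve=c^{1/(n+1)}$ and choosing $c$ small gives \eqref{theo_general_result_two+} via \eqref{vb2+}.
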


\bigskip

\begin{proof}[Proof of Theorem~\ref{t3} modulo Theorem~\ref{t4}.]
We shall use Corollary~\ref{lemma_translation_two} with the same $\bm f$ and $\bm\theta$ as in the statement of Theorem~\ref{t3}. The fact that $\vv f$ is non-degenerate at $x_0$ implies  that $\vv f$ is at least twice continuously differentiable on a sufficiently small neighborhood $\cU$ of $x_0$. Hence the existence of the constant $M$ satisfying \eqref{lemma_translation_M} follows on taking $\cU$ sufficiently small so that $\vv f$ is $C^2$ on the closure of $\cU$. Shrink $\cU$ further if necessary and choose $c,K_0>0$ such that the conclusions of Theorem~\ref{t4} hold with $\kappa=\tfrac13$. In  particular, let $B\subset\cU$ be any subinterval, $Q>Q_B$ and $\psi$ satisfy \eqref{theo_general_psi_db2}. Then
we have that
\begin{equation}\label{vb765}
|B\setminus\cG(c,Q,\psi)|\leq \tfrac13\,|B|\,.
\end{equation}
Assuming without loss of generality that $K_0$ is at least as in \eqref{vb678}, we observe that Corollary~\ref{lemma_translation_two} is applicable. In particular, by \eqref{vb234}, we have that
\begin{equation} \label{lb_Delta_1}
|\Delta(\tilde Q,\tilde \psi,B,\bm\theta,\rho)|\geq |\cG(c,Q,\psi)\cap B|-|B\setminus B^{\rho}|,
\end{equation}
where $\tilde Q$, $\tilde \psi$ and $\rho$ are defined by~\eqref{Q*}.
It follows  from the definition of $B^\rho$ that $B\setminus B^{\rho}$ is a union of two intervals, each of length $\le \rho$. Therefore
$$
|B\setminus B^{\rho}|\leq 2\rho.
$$
The lower bound in~\eqref{theo_general_psi_db2} implies that $\psi^{n-1}Q^2>K_0^{n-1}Q^{1/2}$. Hence, for any \begin{equation} \label{def_lb}
Q\ge\left(\frac{12\cdot C_0}{K_0^{n-1}|B|}\right)^2,
\end{equation}
we have that
$
\rho
\leq\frac{1}{12}|B|
$
and consequently
\begin{equation} \label{ub_B_rho}
|B\setminus B^{\rho}|<\frac{1}{6}|B|.
\end{equation}
Thus, by \eqref{vb765}, \eqref{lb_Delta_1} and \eqref{ub_B_rho}, we obtain that

$$
|\Delta(Q,\psi,B,\bm\theta,\rho)|\ge\tfrac12|B|
$$
provided that
$$
Q\ge Q^*_B:=\max\left\{Q_B,~\left(\frac{12\cdot C_0}{K_0^{n-1}|B|}\right)^2\right\}\,.
$$
This verifies \eqref{theo_general_result_two} and completes the proof of Theorem~\ref{t3} modulo Theorem~\ref{t4}.

\end{proof}

\section{Quantitative non-divergence}\label{QND}

In what follows we give a simplified account of the theory developed by Kleinbock and Margulis in \cite{Kleinbock-Margulis-98:MR1652916} by restricting ourselves to functions of one variable.
Let $U$ be an open subset of
$\R$, $f:U\to\R$ be a continuous function and let
$C,\alpha>0$. The function $f$ is called {\it $(C,\alpha)$-good on
$U$}\/ if for any open ball (interval) $B\subset U$ the following is satisfied
\begin{equation}\label{e:089}
    \forall\,\ve > 0\qquad \left|\Big\{x\in B : |f(x)| < \ve \,\sup_{x\in B}|f(x)|\,\Big\}\right|\
    \le\
C\,\ve^\alpha \,|B|.
\end{equation}
Given $\lambda>0$ and a ball $B=B(x_0,r)\subset\R$
centred at $x_0$ of radius $r$, $\lambda B$ will denote the `scaled' ball
$B(x_0,\lambda r)$. Given $\vv v_1,\dots,\vv v_r\in\R^{n+1}$ we shall write
$\|\vv v\we\dots\we\vv v_r\|_\infty$ for the supremum norm of the multivector
$\vv v_1\we\dots\we\vv v_r$.  By definition, this is the maximum of the absolute values of the coordinates of
$\vv v_1\we\dots\we\vv v_r$ in the standard basis.  These coordinates are all the possible $r\times r$
minors of the matrix $\Gamma$ composed of the vectors $\vv v_i$ as its columns, see \cite{Schmidt-1980}.
Also, given an $(n+1)\times r$ matrix $\Gamma$, we will write $\|\Gamma\|_\infty$ for $\|\vv v_1\we\dots\we\vv v_r\|_\infty$, where $\vv v_1,\dots,\vv v_r$ are the columns of $\Gamma$.

We will use the following slightly simplified version of  \cite[Theorem~5.2]{Kleinbock-Margulis-98:MR1652916} due to  Kleinbock and Margulis.

\begin{theoremKM}[Quantitative Non-Divergence]
Let $n\in\N$, $C,\alpha>0$ and $0<\rho\le 1/(n+1)$ be given. Let $B$ be a
ball in $\R$ and $h:3^{n+1}B \to \GL_{n+1}(\R)$ be given. Assume that for
any linearly independent collection of vectors $\vv v_1,\dots,\vv v_r\in \Z^{n+1}$
\begin{enumerate}
\item[{\rm(i)}] \hspace*{2ex}the function $x\mapsto \|h(x)\vv v_1\we\dots\we h(x)\vv v_r\|_\infty$ is
$(C,\alpha)$-good\ on $3^{n+1}B$, and\\[-1ex]
\item[{\rm(ii)}] \hspace*{2ex}$\sup\limits_{x\in B}\|h(x)\vv v_1\we\dots\we h(x)\vv v_r\|_\infty\ge\rho$.
\end{enumerate}
Then for any $ \ve >0$
\begin{equation}\label{vb3}
\left|\Big\{x\in B:\delta\big(h(x)\Z^{n+1}\big)\le
\ve\Big\}\right| \hspace*{1ex}\le\hspace*{1ex} (n+1)C 6^{n+1} \left(\frac\ve \rho
\right)^\alpha |B|\,,
\end{equation}
where $\delta(\cdot)$ is given by \eqref{vb0}.
\end{theoremKM}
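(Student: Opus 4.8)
The plan is to reproduce, for functions of one real variable, the Besicovitch-covering argument of Kleinbock and Margulis \cite{Kleinbock-Margulis-98:MR1652916}. I work throughout with \emph{primitive} (saturated) subgroups $\Gamma\le\Z^{n+1}$, i.e.\ those with $\Gamma=\Z^{n+1}\cap\Span_\R\Gamma$: for such a $\Gamma$ of rank $r$ with a $\Z$-basis $\vv v_1,\dots,\vv v_r$ set $\psi_\Gamma(x):=\|h(x)\vv v_1\we\dots\we h(x)\vv v_r\|_\infty$ (independent of the chosen basis) and $\psi_{\{0\}}\equiv1$. Then hypotheses (i) and (ii) assert precisely that every $\psi_\Gamma$ with $1\le\rank\Gamma\le n+1$ is $(C,\alpha)$-good on $3^{n+1}B$ and satisfies $\sup_{x\in B}\psi_\Gamma\ge\rho$. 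Since the shortest vector of any lattice is primitive, $\delta(h(x)\Z^{n+1})\le\ve$ holds exactly when $\psi_{\Z\vv v}(x)\le\ve$ for some primitive $\vv v\in\Z^{n+1}$. We may assume $0<\ve\le\rho$, the estimate being trivial otherwise (then $(\ve/\rho)^\alpha\ge1$ and, since one may take $C\ge1$, the claimed bound already exceeds $|B|$). The only analytic input used repeatedly is the following consequence of $(C,\alpha)$-goodness: if $f$ is $(C,\alpha)$-good on a ball containing $V$ and $\sup_V|f|\ge\rho$, then $|\{x\in V:|f(x)|<\ve\}|\le C(\ve/\rho)^\alpha|V|$.

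The core of the argument is an inductive descent. One proves, by induction over a ``distinguished small subgroup'' $W$ of growing rank --- with an auxiliary induction on dimension that reduces a $\Z^{n+1}$-problem to one inside the rational subspace $\Span_\R W$ --- a local estimate whose top instance $W=\{0\}$, $V=B$ is the theorem. The inductive step runs as follows: (a) for each bad $x$, using a Plücker/submodularity inequality for the exterior-power norms together with the normalisation $\rho\le1/(n+1)$ --- which forces the collection of $\rho$-small primitive subgroups at $x$ to be well structured, in particular to contain a unique maximal element --- select the largest primitive $W(x)\supsetneq W$ with $\psi_{W(x)}(x)<\rho$; (b) attach to $x$ the maximal interval $V(x)\ni x$ on which $\psi_{W(x)}<\rho$, so that $\psi_{W(x)}$ attains $\rho$ on $\overline{V(x)}$ and hence on $3V(x)$; (c) apply the one-dimensional Besicovitch covering lemma to $\{V(x)\}$ to extract a subfamily of bounded multiplicity still covering the bad set; (d) on each chosen ball, bound the bad points whose short vector lies in $\Span_\R W(x_0)$ by the lower-dimensional instance of the statement (the sublattice $h(x)\bigl(\Z^{n+1}\cap\Span_\R W(x_0)\bigr)$ inheriting the $(C,\alpha)$-good and sup-$\ge\rho$ properties), and bound the remaining bad points by the inductive hypothesis with $W$ replaced by $W(x_0)$. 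Each round costs a bounded factor --- dilation by $3$ together with the $\R^1$ Besicovitch multiplicity giving a factor $6$ --- there are at most $n+1$ rounds, and summing over the at most $n+1$ relevant ranks produces the constant $(n+1)C6^{n+1}$; this also explains why $h$ must be defined on the enlarged domain $3^{n+1}B$.

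I expect the main obstacle to be the bookkeeping of this recursion rather than any isolated hard estimate. The genuinely delicate points are: establishing the structure of the $\rho$-small primitive subgroups at a point (this is where $\rho\le1/(n+1)$ is used, and where one must pass carefully between the supremum norm on $\bigwedge^r\R^{n+1}$ and the Euclidean norm, for which the Plücker inequalities are cleanest); verifying that the intervals dilated across the $\le n+1$ levels never leave $3^{n+1}B$; keeping the covering multiplicities uniform so the per-level constants multiply to exactly $6^{n+1}$; organising the interlocking dimension- and rank-inductions so that every hypothesis is genuinely available when invoked --- in particular in the borderline full-rank situation, where $\det h(x)$ is itself small, which is governed by hypothesis (ii) at $r=n+1$; and confirming that the two cases in step (d) exhaust the bad set on each ball. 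The passage from ``$(C,\alpha)$-good with sup $\ge\rho$'' to a measure bound is, by contrast, immediate from the definition.
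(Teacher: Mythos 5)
The paper does not prove this statement at all: Theorem~KM is imported verbatim (in a slightly simplified, one-variable form) from Kleinbock--Margulis \cite{Kleinbock-Margulis-98:MR1652916}, Theorem~5.2, and is used in \S\ref{prooft4} as a black box. So there is no internal proof to compare you against; the relevant benchmark is the original Kleinbock--Margulis argument, and what you have written is a recognisable outline of exactly that argument. Your architecture is the standard one: pass to primitive subgroups $\Gamma\le\Z^{n+1}$ and the covolume-type functions $\psi_\Gamma(x)=\|h(x)\vv v_1\we\dots\we h(x)\vv v_r\|_\infty$, note that hypotheses (i)--(ii) are statements about these functions, reduce to $\ve\le\rho$, and run an induction over ranks in which each level costs a factor $3\cdot 2=6$ (dilation by $3$ plus the one-dimensional covering multiplicity), with at most $n+1$ levels explaining both $6^{n+1}$, the prefactor $n+1$, and the enlarged domain $3^{n+1}B$. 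That is faithful to the source.

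However, as it stands the proposal is a plan rather than a proof, and the parts you defer are precisely where the content of Kleinbock--Margulis \S4--5 lies. You never formulate the inductive statement (the analogue of their covering theorem with its poset of ``marked'' subgroups), so none of the steps (a)--(d) can be checked; and the one structural claim you do make concretely --- that submodularity of $\Gamma\mapsto\psi_\Gamma(x)$ together with $\rho\le 1/(n+1)$ forces the $\rho$-small primitive subgroups at $x$ to contain a unique maximal element --- does not follow from the inequality $\psi_{\Gamma\cap\Delta}\,\psi_{\Gamma+\Delta}\le\psi_\Gamma\,\psi_\Delta$ alone: when $\Gamma\cap\Delta\neq\{0\}$ you have no lower bound on $\psi_{\Gamma\cap\Delta}$, so the family of $\rho$-small subgroups need not be closed under sums, and the actual argument organises the selection differently (maximality is imposed relative to the chosen intervals on which $\psi<\rho$, within the abstract covering theorem, not pointwise). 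Likewise the claim that the two cases in step (d) exhaust the bad set, and that the dilated intervals across all levels stay inside $3^{n+1}B$ with multiplicities multiplying to exactly $6^{n+1}$, are asserted, not verified. Since the paper itself simply cites \cite{Kleinbock-Margulis-98:MR1652916}, the economical fix is to do the same; if you want a self-contained proof you must state and prove the covering/induction theorem precisely, at which point the delicate points you list stop being bookkeeping and become the proof.
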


\bigskip

We  now bring to the forefront the role this theorem  plays in establishing Theorem~\ref{t4}. In the case $d=1$, $m=n-1$ the matrix $G$ given by~\eqref{section_DP_def_gG} has the following form:

\begin{equation} \label{subsection_CAcurve_def_gG}
G(x)=\left(
\begin{array}{cccccccc}
f_1-x f_1'&f_1' & -1&0&\dots& 0\\[1ex]
f_2-x f_2'&f_2' & 0&-1&\dots& 0\\[1ex]
\vdots&\vdots&\vdots&\vdots&\ddots&\vdots\\[1ex]
f_{n-1}-x f_{n-1}'&f_{n-1}' & 0&0&\dots& -1\\[1ex]
x &-1 &  0&0&\dots& 0\\[1ex]
1&0 &  0&0&\dots& 0
\end{array}
\right).
\end{equation}

\noindent We define
\begin{equation}\label{h}
h(x):=D\,g^{-1}G(x)   \quad\text{where } \quad D:=\diag\{c^{1/(n+1)},\dots,c^{1/(n+1)}\}
\end{equation}
and $g=g(c,Q,\psi)$ is as in \eqref{section_DP_def_g}.
Then, by \eqref{def_Bdelta} it follows that

\begin{equation} \label{vb2}
B\setminus \cG(c,Q,\psi)=\Big\{x\in B:\delta\big(D\,g^{-1}G(x)\Z^{n+1}\big)< c^{1/(n+1)}\Big\}\,.
\end{equation}
Therefore, the measure of $B\setminus \cG(c,Q,\psi)$ can be estimated via \eqref{vb3} subject to verifying conditions (i) and (ii) of Theorem~KM. These conditions involve the quantity $\|h(x)\vv v_1\we\dots\we h(x)\vv v_r\|_\infty$ which, by definition, is the maximum of the absolute values of the coordinates of
$h(x)\vv v_1\we\dots\we h(x)\vv v_r$ in the standard basis. The coordinates run over all possible $r\times r$
minors of the matrix $h(x)\Gamma$, where $\Gamma$ is composed of the vectors $\vv v_i$ as its columns.
Hence,
\begin{equation}\label{vb100}
\|h(x)\vv v_1\we\dots\we h(x)\vv v_r\|_\infty= \max_{I=\{i_1,\dots,i_r\}\subset\{1,\dots,n+1\}}\det\big(h_I(x)\Gamma\big),
\end{equation}
where $h_I(x)$ stands for the $r\times(n+1)$ matrix formed by the rows $i_1,\dots,i_r$ of $h(x)$ and $\Gamma=(\vv v_1,\dots,\vv v_r)$ is an $(n+1)\times r$ matrix over $\Z$ of rank $r$.

Throughout the rest of the paper the set of integer $k\times r$ matrices over $\Z$ (respectively, over $\R$) will be denoted by $\Mat_\Z(k,r)$ (respectively, by $\Mat_\R(k,r)$). In turn, the subset of $\Mat_\Z(k,r)$ of full rank, that is of rank $\max\{k,r\}$, will be denoted by $\Mat^*_\Z(k,r)$, and the subset of $\Gamma\in\Mat_\R(k,r)$ with $\|\Gamma\|_\infty\ge1$ will be denoted by $\Mat^*_\R(k,r)$. Observe that $\Mat^*_\Z(k,r)\subset \Mat^*_\R(k,r)$.

Given $I=\{i_1,\dots,i_r\}\subset\{1,\dots,n+1\}$, let $G_I(x)$ denote\label{def_G_I} the $r\times(n+1)$ matrix formed by the rows $i_1,\dots,i_r$ of $G(x)$.
Define the function
$$
\phi_{I,\Gamma}(x):=\det\big(G_I(x)\Gamma\big)\,.
$$

%
%

Since $d=1$, we have that
\begin{equation} \label{section_DP_def_g+}
g=\diag\Big\{\underbrace{\psi,\dots,\psi}_{n-1},(\psi^{n-1} Q)^{-1},c Q\Big\}\,.
\end{equation}
Then, for $h$ given by \eqref{h}, we have that
\begin{equation}\label{vb101}
\det\big(h_I(x)\Gamma\big)=c^{\frac{r}{n+1}}\cdot\Phi_I\cdot\phi_{I,\Gamma}(x)\,,
\end{equation}
where
\begin{equation}\label{vb102}
\Phi_I=\left\{\begin{array}{cl}
                \psi^{-r} &\text{ if $n\not\in I$ and $n+1\not\in I$,}\\[1ex]
                \psi^{n-r} Q &\text{ if $n\in I$ and $n+1\not\in I$,}\\[1ex]
                (c\,\psi^{r-1}Q)^{-1} &\text{ if $n\not\in I$ and $n+1\in I$,}\\[1ex]
                c^{-1}\psi^{n-r+1} &\text{ if $n\in I$ and $n+1\in I$.}
              \end{array}
\right.
\end{equation}
In view of \eqref{vb101} and \eqref{vb102} verifying conditions (i) and (ii) of Theorem~KM for our choice of $h$ is reduced to understanding the functions $\phi_{I,\Gamma}(x)$ for all possible choices of $I$ and $\Gamma$. With this in mind we now state the main assertion regarding $\phi_{I,\Gamma}(x)$.

\medskip

\begin{proposition} \label{proposition_curve_Ca_goodness_2}
Let $\bm f=(f_1,\ldots,f_{n-1})$ be a map of one real variable such that $x\mapsto \vv f(x):=(x,\mv f(x))$ is non-degenerate at some point $x_0\in\R$. Then, there exists a sufficiently small open interval $\cU$ centred at $x_0$, $l\in\N$ and a constant $C>0$ satisfying the following. For any interval $B\subset \cU$ there exists a constant $\rho_B>0$ such that for any $1\le r\le n$ and any $\Gamma\in\Mat^*_\Z(n+1,r)$ the following two properties are satisfied\/$:$
\medskip
\begin{itemize}
\item[{\rm(i)}]
for any $I=\{i_1,\dots,i_r\}\subset\{1,\dots,n+1\}$ we have that
\begin{equation}\label{p1}
|\phi_{I,\Gamma}|\quad\text{is \quad$\left(C,\frac{1}{2l-1}\right)$-good on $3^{n+1}B$,}
\end{equation}
\medskip
\item[{\rm(ii)}] for some $I=\{i_1,\dots,i_r\}\subset\big\{1,\dots,\max\{r,n-1\}\big\}$ we have that
\begin{equation} \label{p2}
\sup_{x\in B}|\phi_{I,\Gamma}(x)|\geq \rho_B.
\end{equation}
\end{itemize}
\end{proposition}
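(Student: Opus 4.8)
\textbf{Proof strategy for Proposition~\ref{proposition_curve_Ca_goodness_2}.}
The plan is to reduce both assertions to concrete analytic properties of the component functions $f_1,\dots,f_{n-1}$ and, crucially, of certain derivatives of combinations of them, by exploiting the block structure of the matrix $G(x)$ in \eqref{subsection_CAcurve_def_gG}. The key observation is that each entry of $G(x)$ is either a constant ($-1$, $0$, or $1$), a coordinate function ($x$), or one of the functions $f_j$, $f_j'$, $f_j - x f_j'$. Consequently, for any $I$ and any $\Gamma\in\Mat^*_\Z(n+1,r)$, the function $\phi_{I,\Gamma}(x)=\det(G_I(x)\Gamma)$ is a polynomial expression in $x$, the $f_j$'s and the $f_j'$'s with integer coefficients coming from $\Gamma$. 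The first step is to analyse this determinant by column operations within $G_I(x)$ (or equivalently, by Laplace expansion along the rows indexed by $I$), separating the cases according to whether the indices $n$ and $n+1$ (the rows $(x,-1,0,\dots,0)$ and $(1,0,\dots,0)$) belong to $I$; these last two rows essentially let one eliminate the $g_j=f_j-xf_j'$ combinations and replace $\phi_{I,\Gamma}$, up to sign and up to a lower-order polynomial in $x$, by a determinant built purely from $f_j$, $f_j'$ and constants, whose leading behaviour is governed by a Wronskian-type minor of the system $(1,x,f_1,\dots,f_{n-1})$ — this is exactly where non-degeneracy at $x_0$ enters.

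For part~(i), the plan is to invoke the standard fact (due to Kleinbock–Margulis, and used throughout the quantitative non-divergence literature) that if $\phi$ is $C^{l}$ on a neighbourhood of $\bar\cU$ and $\sup_{x\in\cU}|\phi^{(l)}(x)|$ is bounded below by a positive constant while the first $l$ derivatives are bounded above, then $\phi$ is $(C',\tfrac1l)$-good on any subinterval, with $C'$ depending only on these bounds; more precisely one uses the lemma that a $C^l$ function all of whose derivatives up to order $l$ are under control and whose $l$-th derivative does not vanish is $(C,1/l)$-good, together with the fact that $(C,\alpha)$-goodness is preserved under taking maxima and is stable under small perturbations. Here the subtlety is that $\phi_{I,\Gamma}$ depends on $\Gamma$, so the constant $C$ and the order of goodness must be chosen \emph{uniformly} over all $\Gamma\in\Mat^*_\Z(n+1,r)$ and all $r\le n$; this uniformity is obtained because $\phi_{I,\Gamma}$ lies, after normalising $\Gamma$, in a fixed finite-dimensional space of functions spanned by products of the $f_j$'s, $f_j'$'s and powers of $x$ up to a bounded degree, and on such a compact family one gets uniform $C^l$ bounds. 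The value of $l$ will be dictated by how many times one must differentiate to see the non-degeneracy: by hypothesis $\vv f$ is $\ell_0$-non-degenerate at $x_0$ for some $\ell_0$, so one takes $l$ comparable to $\ell_0$ (this is why the goodness order is written as $1/(2l-1)$, leaving room for the worst-case multilinear combinations), shrinking $\cU$ so that the relevant derivative bounds hold on $\bar\cU$.

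For part~(ii), the plan is to show that the supremum over the \emph{sub-collection} of indices $I\subset\{1,\dots,\max\{r,n-1\}\}$ — i.e.\ avoiding, when possible, rows $n$ and $n+1$ — cannot be uniformly small. The idea is: since $\|\Gamma\|_\infty\ge1$ and $\Gamma$ has rank $r$, some $r\times r$ minor of $\Gamma$ is $\ge1$ in absolute value; one then argues that a suitable linear combination of the functions $\{\phi_{I,\Gamma}:I\subset\{1,\dots,n-1\}\cup\text{(relevant rows)}\}$, evaluated through the Cauchy–Binet formula, recovers (a non-zero multiple of) a Wronskian of $r$ of the functions from $(1,x,f_1,\dots,f_{n-1})$, and non-degeneracy guarantees that such Wronskians are not identically zero on $\cU$ (after possibly shrinking $\cU$ so that the relevant Wronskian-type determinant is bounded away from $0$ at $x_0$, hence on a neighbourhood, hence has a uniform positive supremum on any $B\subset\cU$). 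The constant $\rho_B$ then comes from this lower bound together with the size of $B$: on a small interval $B$ the supremum of a function bounded below in $C^l$-norm-sense is $\gg |B|^{l}$ or so, which is the shape of $\rho_B$. \textbf{The main obstacle} I anticipate is precisely establishing (ii) uniformly over all $\Gamma\in\Mat^*_\Z(n+1,r)$: one must rule out the possibility that, for some sequence of $\Gamma$'s, all the relevant determinants $\phi_{I,\Gamma}$ become simultaneously small on $B$; handling this requires a careful Cauchy–Binet / linear-algebra argument showing that the normalised $\Gamma$ always ``sees'' a non-vanishing Wronskian minor of the curve, and controlling the interplay between the normalisation of $\Gamma$ (via $\|\Gamma\|_\infty\ge1$), the choice of $I$, and the way the $g_j$-column and the $(x,1)$-rows contribute — this is the genuinely new analytic input replacing the analyticity hypothesis of \cite{Beresnevich-SDA1}.
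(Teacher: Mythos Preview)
Your overall architecture --- case analysis on $I\cap\{n,n+1\}$, reduction to Wronskian-type quantities, and a compactness argument for the uniformity over $\Gamma$ --- matches the paper's. But there is a genuine gap in your plan for part~(i), and it is exactly the point that explains the exponent $\tfrac{1}{2l-1}$.

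You propose to get $(C,\alpha)$-goodness by showing that some high derivative of $\phi_{I,\Gamma}$ is bounded below. This works cleanly when $\phi_{I,\Gamma}$ is (a multiple of) an element of the linear family $\cF=\{u_0+\vv u\cdot\vv f\}$ or $\cF'=\{\vv u\cdot\vv f'\}$, which is indeed what happens when $n\in I$ or $n+1\in I$; there you get $(C,\tfrac1l)$- or $(C,\tfrac1{l-1})$-goodness directly. The case you are underestimating is $I\subset\{1,\dots,n-1\}$. After the column reduction (in the paper, right-multiplication by the unipotent matrix $\xi_x$ of~\eqref{subsection_CAcurve_def_det_additional_matrix}), $\phi_{I,\Gamma}$ is \emph{not} a linear combination of the $f_j$ and $f_j'$ but a \emph{skew gradient}
\[
\tilde\nabla\vv g\;=\;(\vv u_1\cdot\vv f)\,(\vv u_2\cdot\vv f')-(\vv u_1\cdot\vv f')\,(u_0+\vv u_2\cdot\vv f),
\]
a genuine bilinear expression in two functions drawn from a non-degenerate family. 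For such products your ``$l$-th derivative bounded below'' heuristic fails: there is no single order $k$ at which $(\tilde\nabla\vv g)^{(k)}$ is uniformly bounded away from zero over all $(\vv u_1,\vv u_2)$, and your parenthetical ``leaving room for the worst-case multilinear combinations'' is not an argument. The paper closes this gap by invoking a specific result of Bernik--Kleinbock--Margulis (Proposition~4.1 in~\cite{Bernik-Kleinbock-Margulis-01:MR1829381}, stated here as Proposition~\ref{prop4.2}): if $\cG$ is a compact family of $C^l$ maps $\vv g:U\to\R^2$ satisfying the uniform non-degeneracy condition~\eqref{c2}, then $|\tilde\nabla\vv g|$ is $(C,\tfrac{1}{2l-1})$-good and its supremum over any $B$ is bounded below uniformly in $\vv g$. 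The paper then checks (Proposition~\ref{prop5.3}) that the family~\eqref{cG} arising here satisfies these hypotheses, using the orthogonality $\vv u_1\cdot\vv u_2=0$ to verify~\eqref{c2}. This skew-gradient step is the substantive analytic input you are missing; without it neither the exponent $\tfrac{1}{2l-1}$ nor the goodness in the case $I\subset\{1,\dots,n-1\}$ is established.

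A secondary point: your plan for (ii) via Cauchy--Binet and Wronskian minors is workable, but the paper's route is cleaner. It uses Hodge duality (Lemma~\ref{hodge}) to write $|\phi_{I,\Gamma}(x)|=|\vv m(x)\cdot\vv w|$ with $\|\vv w\|\ge1$, and then the same BKM skew-gradient proposition already delivers the uniform lower bound $\sup_B|\tilde\nabla\vv g|\ge\rho_B$ over the compact family $\cG$; no separate Wronskian estimate is needed.
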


\medskip

As we shall see in \S\ref{prooft4}  below, once armed with Proposition \ref{proposition_curve_Ca_goodness_2} it is not difficult to establish  the desired Theorem~\ref{t4}.

\bigskip

\section{Non-degenerate maps and $(C,\alpha)$-good functions}

In this section we collect together several statements regarding $(C,\alpha)$-good functions that will be required during the course of establishing  Proposition \ref{proposition_curve_Ca_goodness_2}.

\medskip

We begin with the following basic lemma which is a direct consequence of Lemma~3.1 in \cite{Bernik-Kleinbock-Margulis-01:MR1829381} (see also \cite[Lemma~3.1]{Kleinbock-Margulis-98:MR1652916}).

\begin{lemma}\label{l:13}
Let $V\subset\R$ be open and $C,\alpha>0$. If $g_1,\dots,g_m$
are $(C,\alpha)$-good functions on $V$ and
$\lambda_1,\dots,\lambda_m\in\R$, then $\max_{i}|\lambda_ig_i| $ is
a $(C',\alpha')$-good function on $V'$ for every $C'\ge C$, $0<\alpha'\le\alpha$ and open subset $V'\subset V$.
\end{lemma}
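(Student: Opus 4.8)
\textbf{Proof proposal for Lemma~\ref{l:13}.}

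The plan is to reduce the statement to the cited result (Lemma~3.1 of \cite{Bernik-Kleinbock-Margulis-01:MR1829381}), which asserts that if $g_1,\dots,g_m$ are $(C,\alpha)$-good on $V$ then $\max_i|g_i|$ is $(C,\alpha)$-good on $V$; our job is just to absorb the scalars $\lambda_i$ and to weaken the parameters to arbitrary $C'\ge C$, $0<\alpha'\le\alpha$, and arbitrary open $V'\subset V$. First I would observe that being $(C,\alpha)$-good is unaffected by multiplication by a nonzero constant: if $g$ is $(C,\alpha)$-good on $V$ and $\lambda\neq0$, then for any ball $B\subset V$ and any $\ve>0$ the set $\{x\in B:|\lambda g(x)|<\ve\sup_B|\lambda g|\}$ equals $\{x\in B:|g(x)|<\ve\sup_B|g|\}$, since the factor $|\lambda|$ cancels between the two sides of the inequality; hence it has measure $\le C\ve^\alpha|B|$. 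If some $\lambda_i=0$, that function contributes nothing to the max (or the max is identically zero, a trivial case), so we may discard it. Thus $\lambda_1 g_1,\dots,\lambda_m g_m$ are $(C,\alpha)$-good on $V$, and by the cited lemma $\max_i|\lambda_i g_i|$ is $(C,\alpha)$-good on $V$.

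Next I would handle the parameter relaxations, each of which only makes the defining condition \eqref{e:089} easier to satisfy. Passing to a smaller domain $V'\subset V$ is immediate: every ball $B\subset V'$ is a ball $B\subset V$, so the inequality in \eqref{e:089} still holds for all such $B$. Replacing $C$ by any $C'\ge C$ is trivial since $C\ve^\alpha|B|\le C'\ve^\alpha|B|$. For the exponent, fix $0<\alpha'\le\alpha$; the only subtlety is that $\ve^\alpha\le\ve^{\alpha'}$ holds when $\ve\le1$ but fails for $\ve>1$. However, when $\ve>1$ the set in \eqref{e:089} is all of $B$ (since $|f(x)|\le\sup_B|f|<\ve\sup_B|f|$ for every $x$, assuming $\sup_B|f|>0$; if $\sup_B|f|=0$ the set is empty), so its measure is $|B|\le C'\ve^{\alpha'}|B|$ provided $C'\ge1$, which we may assume by enlarging $C'$ if necessary — and in any case the standard convention is that $(C,\alpha)$-good is only a meaningful constraint for $\ve\le1$, the $\ve>1$ range being automatic. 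For $\ve\le1$ we simply bound the measure by $C\ve^\alpha|B|\le C'\ve^{\alpha'}|B|$. This establishes $(C',\alpha')$-goodness on $V'$.

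There is no real obstacle here; the lemma is essentially bookkeeping on top of the cited max-stability result. The one point that warrants a sentence of care is the $\ve>1$ regime in the exponent-weakening step, which I would dispatch exactly as above by noting the relevant sublevel set is the whole ball. I would keep the write-up to a short paragraph invoking \cite[Lemma~3.1]{Bernik-Kleinbock-Margulis-01:MR1829381} for the max, the one-line cancellation argument for the scalars, and a remark that shrinking $V$, enlarging $C$, and shrinking $\alpha$ each only loosen \eqref{e:089}.
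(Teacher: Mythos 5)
Your proposal is correct and matches the paper's approach: the paper offers no argument beyond noting the lemma is "a direct consequence of Lemma~3.1 in \cite{Bernik-Kleinbock-Margulis-01:MR1829381}", whose parts already cover scalar multiples, suprema of families, and the monotonicity in $C$, $\alpha$ and the domain, which is exactly the reduction plus bookkeeping you carry out. Your explicit treatment of the $\ve>1$ regime is a reasonable way to justify the exponent-weakening step (and is harmless, since in all uses the constants satisfy $C\ge1$), so there is nothing further to add.
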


The next lemma is a straightforward  consequence of the definition of non-degeneracy.

\begin{lemma}\label{lnd}
Let $\bm f=(f_1,\ldots,f_{n-1})$ be a map of one real variable such that $x\mapsto \vv f(x):=(x,\mv f(x))$ is $l$-non-degenerate at some point $x_0\in\R$.
Then for any $r$ indices $1\le i_1<\dots<i_r\le n-1$ the map $x\mapsto (x,f_{i_1}(x),\dots,f_{i_r}(x))$ is $l$-non-degenerate at $x_0$.
\end{lemma}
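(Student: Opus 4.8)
The plan is to realise the reduced map as a linear image of $\vv f$ and to exploit the fact that $l$-non-degeneracy is preserved under surjective linear maps. Write $\vv f(x)=(x,f_1(x),\dots,f_{n-1}(x))$, thinking of its coordinates as indexed by $0,1,\dots,n-1$, and let $\pi\colon\R^n\to\R^{r+1}$ be the coordinate projection onto the coordinates labelled $0,i_1,\dots,i_r$, that is $\pi(y_0,y_1,\dots,y_{n-1})=(y_0,y_{i_1},\dots,y_{i_r})$. Then the map $\tvv f\colon x\mapsto(x,f_{i_1}(x),\dots,f_{i_r}(x))$ is precisely $\pi\circ\vv f$. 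Since $\pi$ is linear it is in particular $C^\infty$, so $\tvv f$ is $l$ times continuously differentiable on the same neighbourhood of $x_0$ on which $\vv f$ is; and because differentiation commutes with the fixed linear map $\pi$, we get $\tvv f^{(k)}(x_0)=\pi\bigl(\vv f^{(k)}(x_0)\bigr)$ for every $k\le l$.

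Next I would invoke the elementary observation that a surjective linear map carries any spanning set of its domain to a spanning set of its codomain: given $w\in\R^{r+1}$, choose $v\in\R^n$ with $\pi(v)=w$, expand $v$ in the family $\{\vv f^{(k)}(x_0):1\le k\le l\}$ (which spans $\R^n$ by hypothesis), and apply $\pi$ to the expansion. Since $\pi$ maps $\R^n$ onto $\R^{r+1}$, it follows that $\tvv f^{(1)}(x_0),\dots,\tvv f^{(l)}(x_0)$, being the $\pi$-images of $\vv f^{(1)}(x_0),\dots,\vv f^{(l)}(x_0)$, span $\R^{r+1}$. Combined with the differentiability noted above, this is exactly the assertion that $\tvv f$ is $l$-non-degenerate at $x_0$, which is what is to be proved.

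There is essentially no genuine obstacle here beyond bookkeeping; the proof is a purely formal consequence of the definition. The one point deserving a moment's care is that the projection $\pi$ must retain the first coordinate $y_0$, so that $\tvv f$ keeps the Monge form $(x,\dots)$ demanded by the statement; and one should record that the argument is insensitive to whether the span in the definition of non-degeneracy is taken over derivative orders $1,\dots,l$ or $0,\dots,l$, since $\pi$ maps the relevant spanning family for $\vv f$ onto the corresponding one for $\tvv f$ in either convention.
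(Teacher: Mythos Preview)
Your argument is correct and is precisely the straightforward unwinding of the definition that the paper has in mind; indeed, the paper does not spell out a proof at all, merely noting that the lemma ``is a straightforward consequence of the definition of non-degeneracy''. Your projection-and-span argument is exactly that consequence made explicit.
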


\bigskip

We will be interested in three particular classes of functions associated with the map
\begin{equation} \label{def_vvf}
\vv f(x):=(x,\mv f(x))=(x,f_1(x),\dots,f_{n-1}(x))\,.
\end{equation}
The first two are
\begin{equation}\label{cF}
  \cF:=\{u_0+\vv u\cdot\vv f(x):\sum_{j=0}^nu_j^2=1\}
\end{equation}
and
\begin{equation}\label{cF'}
  \cF':=\{\vv u\cdot\vv f'(x):\sum_{j=1}^nu_j^2=1\}\,,
\end{equation}
where
$
\vv u=(u_1,\dots,u_n)
$
and the `dot' represents  the standard inner product.  For these two classes we will will make use of the following statement.

\begin{proposition}
\label{prop5.1}
Suppose that the map $x\mapsto\vv f(x)$ is $l$-non-degenerate at some point $x_0 \in \R$. Then there exists a constant $C>0$ and a neighbourhood $V$ of $x_0$ such that
\begin{itemize}
  \item[{\rm(a)}]
every function in $\cF$ is $(C,\tfrac1l)$-good on $V$;\\[0ex]
  \item[{\rm(b)}]
every function in $\cF'$ is $(C,\frac1{l-1})$-good on $V$;\\[0ex]
  \item[{\rm(c)}]
for any interval $B\subset V$ there exist a constant $\rho_B>0$ such that
\begin{equation}\label{f1}
\inf_{f\in\cF}\sup_{x\in B}|f(x)|\ge \rho_B\qquad\text{and}\qquad \inf_{f\in\cF'}\sup_{x\in B}|f(x)|\ge \rho_B\,.
\end{equation}
\end{itemize}
\end{proposition}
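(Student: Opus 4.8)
The plan is to prove the three assertions by reducing everything to a single scalar function built from a non-degenerate map, and then invoke the standard $(C,\alpha)$-good theory of Kleinbock--Margulis together with a compactness argument on the unit sphere of coefficient vectors. The key observation for parts (a) and (b) is that the functions in $\cF$ and $\cF'$ are, for fixed coefficient vectors, of the shape $P(x)+\sum_{j} u_j\, h_j(x)$ where the $h_j$ are the components of a non-degenerate map (for $\cF$, the map $x\mapsto(1,x,f_1(x),\dots,f_{n-1}(x))$; for $\cF'$, its derivative $x\mapsto(1,f_1'(x),\dots,f_{n-1}'(x))$ — which is non-degenerate of order $l-1$ by the definition, since the derivatives of $\vv f$ of orders up to $l$ spanning $\R^n$ forces the derivatives of $\vv f'$ of orders up to $l-1$ to span). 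So both are linear combinations, with coefficients on the unit sphere, of a fixed finite family of functions that together form a non-degenerate system.

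\textbf{Key steps.}
First I would recall (or cite from \cite{Kleinbock-Margulis-98:MR1652916}, specifically the argument behind Proposition~3.4 there) the fact that if $h_0,\dots,h_n$ are $C^l$ functions on a neighbourhood of $x_0$ whose Wronskian-type determinant (equivalently, whose Taylor expansions up to order $l$) are linearly independent at $x_0$, then there exist $C>0$ and a neighbourhood $V$ of $x_0$ such that \emph{every} nontrivial linear combination $\sum_j c_j h_j$ is $(C,1/l)$-good on $V$, with $C$ uniform over all choices of $(c_0,\dots,c_n)$. This is exactly the mechanism that makes polynomials $(C,1/\deg)$-good and it generalises to non-degenerate systems via the observation that a non-degenerate map has the ``$(C,\alpha)$-good with uniform constants'' property after passing to a small enough neighbourhood. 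Applying this with the system $\{1,x,f_1,\dots,f_{n-1}\}$ (which is $l$-non-degenerate at $x_0$ by hypothesis, using Lemma~\ref{lnd} in the form that adding the constant and coordinate functions preserves non-degeneracy) gives (a); applying it with $\{1,f_1',\dots,f_{n-1}'\}$, which is $(l-1)$-non-degenerate, gives (b). Lemma~\ref{l:13} is not strictly needed here but shows the class is closed under taking maxima, which is how it will be used downstream.

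For part (c), the argument is a compactness/continuity argument. Consider the map $(u,x)\mapsto u_0+\vv u\cdot\vv f(x)$ on $\Sph^n\times \overline{B}$; it is continuous, and for each fixed $u\in\Sph^n$ the function $x\mapsto u_0+\vv u\cdot\vv f(x)$ is \emph{not identically zero} on $B$ — indeed if it vanished identically on an interval then all its derivatives would vanish at an interior point, contradicting the fact that $1,x,f_1,\dots,f_{n-1}$ have linearly independent $l$-jets at points of $\cU$ (non-degeneracy is an open condition, so shrinking $\cU$ we may assume it holds throughout $\cU$, hence throughout $B$). Therefore $u\mapsto \sup_{x\in B}|u_0+\vv u\cdot\vv f(x)|$ is a positive continuous function on the compact set $\Sph^n$, hence bounded below by some $\rho_B>0$; the same reasoning applied to $\Sph^{n-1}\times\overline B$ and $\vv u\cdot\vv f'(x)$ (using $(l-1)$-non-degeneracy of $\vv f'$, which guarantees $\vv f'$ is not ``flat'' and no nontrivial linear combination of its components vanishes on an interval) yields the second inequality in \eqref{f1}. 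One takes the minimum of the two constants and renames it $\rho_B$.

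\textbf{Main obstacle.}
The main technical point — and the step I expect to require the most care — is the uniformity of the constant $C$ in part (a)/(b) over the whole sphere of coefficient vectors, and more fundamentally the assertion that a non-degenerate system is uniformly $(C,\alpha)$-good on a sufficiently small neighbourhood. This is essentially \cite[Proposition~3.4]{Kleinbock-Margulis-98:MR1652916} (or the Bernik--Kleinbock--Margulis refinement \cite{Bernik-Kleinbock-Margulis-01:MR1829381}), but it must be invoked in exactly the right form: one cannot simply say ``each such function is $(C,\alpha)$-good'' with a constant depending on $u$, because the downstream application to Theorem~KM needs a single $C$. The way to secure this is to use the quantitative version stating that a $C^l$ function whose derivatives up to order $l$ are bounded away from simultaneously vanishing (uniformly on a neighbourhood) is $(C,1/l)$-good with $C$ depending only on those bounds, and then to note that on $\Sph^n$ the relevant lower bound $\inf_{x\in V}\max_{0\le k\le l}|\frac{d^k}{dx^k}(u_0+\vv u\cdot\vv f(x))|$ is a positive continuous function of $u$, hence bounded below uniformly — this is again a compactness argument, parallel to the one in (c). Once this uniformity is in hand, everything else is routine.
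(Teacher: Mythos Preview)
Your approach is essentially the same as the paper's: parts~(a) and~(b) are obtained by citing \cite[Corollary~3.5]{Bernik-Kleinbock-Margulis-01:MR1829381} (equivalently \cite[Proposition~3.4]{Kleinbock-Margulis-98:MR1652916}), and part~(c) is exactly the compactness argument you describe --- continuity and strict positivity of $(u_0,\vv u)\mapsto\sup_{x\in B}|u_0+\vv u\cdot\vv f(x)|$ on the unit sphere, with the second inequality in \eqref{f1} following from the observation that non-degeneracy of $\vv f$ at $x_0$ implies non-degeneracy of $\mv f'$ there. One minor remark: Lemma~\ref{lnd} as stated concerns \emph{removing} components, not adding the constant function, so your appeal to it for the system $\{1,x,f_1,\dots,f_{n-1}\}$ is not quite right --- but this is harmless, since the cited BKM result already handles affine combinations of a non-degenerate map directly.
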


\begin{proof}
Parts~(a) and (b) appear as Corollary~3.5 in \cite{Bernik-Kleinbock-Margulis-01:MR1829381};  see also \cite[Proposition~3.4]{Kleinbock-Margulis-98:MR1652916}. For part (c) we choose $V$ sufficiently small so that $\vv f$ is non-degenerate everywhere on $V$. Then, note that the map $(u_0,\vv u)\mapsto\sup_{x\in B}|u_0+\vv u\cdot\vv f(x)|$ is continuous and strictly positive. The latter is due to the linear independence of $1,x,f_1(x),\dots,f_{n-1}(x)$ over $\R$ which in turn is a consequence of the non-degeneracy of $\vv f$ on $B\subset V$. Then
$$
\inf_{(u_0,\vv u)\in\R_1^{n+1}}\ \sup_{x\in B}|u_0+\vv u\cdot\vv f(x)|:=\rho_B>0\,,
$$
since we are taking the infimum of a positive continuous function over a compact set, namely $\R_1^{n+1}$, which is the unit (Euclidean) sphere in $\R^{n+1}$. This proves the first of the inequalities associated with   \eqref{f1}. The proof of the second is similar once we make the observations that the non-degeneracy of $\vv f$ at $x_0$ implies the non-degeneracy of $\mv f'=(f_1',\dots,f_{n-1}')$ at $x_0$.
\end{proof}

\medskip

In what follows, given a map $\vv g=(g_1,g_2)$ of one real variable, the associated function
\begin{equation} \label{def_skew_gradient}
\tilde\nabla\vv g:=g_1g'_2-g_1'g_2
\end{equation}
will be referred to as  the {\em  skew gradient of $\vv g$}. This notion was introduced in~\cite[\S4]{Bernik-Kleinbock-Margulis-01:MR1829381} and the  following statement concerning  the skew gradient is a simplified version of Proposition~4.1 from \cite{Bernik-Kleinbock-Margulis-01:MR1829381}.

\begin{proposition}\label{prop4.2}
Let $U$ be an open interal, $x_0\in U$ and let $\cG$ be a family of $C^l$ maps $\vv g:U\to\R^2$ such that
\begin{equation}\label{c1}
\text{the family }\big\{g'_i:\vv g=(g_1,g_2)\in\cG,~i=1,2\big\}\text{ is compact in }C^{l-1}(U).
\end{equation}
Assume also that
\begin{equation}\label{c2}
\inf_{\vv v\in\R_1^2}\,\,\inf_{\vv g\in\cG}\,\,\max_{1\le i\le l}|\vv v\cdot\vv g^{(i)}(x_0)|>0\,.
\end{equation}
Then there exists a constant $C>0$ and a neighbourhood $V$ of $x_0$ such that
\begin{itemize}
  \item[{\rm(a)}] $|\tilde\nabla\vv g|$ is $(C,\frac{1}{2l-1})$-good on $V$ for all $\vv g\in\cG$,\\
  \item[{\rm(b)}] for every interval $B\subset V$ there exists $\rho_B>0$ such that for all  $\vv g\in\cG$
  $$
  \sup_{x\in B}|\tilde\nabla\vv g(x)|\ge\rho_B\,.
  $$
\end{itemize}
\end{proposition}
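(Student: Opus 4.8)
The plan is to prove Proposition~\ref{prop4.2} by establishing part~(b) first, since the supremum bound is essentially a compactness argument, and then using it together with the structure of the skew gradient to deduce the $(C,\alpha)$-good property in part~(a). The key observation linking the two parts is that for a $C^l$ map $\vv g=(g_1,g_2)$, the skew gradient $\tilde\nabla\vv g=g_1g_2'-g_1'g_2$ is, up to a factor, the Wronskian-type determinant whose derivative simplifies considerably: one computes $(\tilde\nabla\vv g)'=g_1g_2''-g_1''g_2$, and more generally the higher derivatives of $\tilde\nabla\vv g$ are linear combinations of the $g_i^{(j)}$ with no first-derivative cross terms. This is the mechanism that converts the non-vanishing of some $\vv v\cdot\vv g^{(i)}(x_0)$ in \eqref{c2} into a lower bound on $\max_{1\le k\le 2l-1}|(\tilde\nabla\vv g)^{(k)}(x_0)|$, which is precisely what is needed to invoke a standard result.

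First I would reduce to a known auxiliary lemma on $(C,\alpha)$-good functions: if $\phi$ is $C^{p}$ on a neighbourhood of $x_0$ and $\max_{1\le k\le p}|\phi^{(k)}(x_0)|$ is bounded below (uniformly over a compact family), then $\phi$ is $(C,1/p)$-good on a possibly smaller common neighbourhood $V$, with $C$ uniform. This is the content of \cite[Lemma~3.3]{Bernik-Kleinbock-Margulis-01:MR1829381} (or the analogous statement in \cite{Kleinbock-Margulis-98:MR1652916}), and it is exactly the tool used to prove Proposition~\ref{prop5.1}(a),(b). Applying it with $p=2l-1$ to the family $\{\tilde\nabla\vv g:\vv g\in\cG\}$ will give part~(a), once I verify the two hypotheses: (1) the family $\{\tilde\nabla\vv g\}$ together with its derivatives up to order $2l-1$ is precompact in $C^0$, which follows from \eqref{c1} since $\tilde\nabla\vv g$ and its derivatives are polynomial expressions in the $g_i$ and their derivatives up to order $l$ (note $g_i$ itself is recovered from $g_i'$ up to an additive constant, but the skew gradient's derivatives of order $\ge1$ only involve $g_i^{(j)}$ for $j\ge1$, and the order-$0$ term $\tilde\nabla\vv g$ is controlled on a bounded interval once a base point value is fixed — one may normalise or simply shrink $U$); and (2) the lower bound $\max_{1\le k\le 2l-1}|(\tilde\nabla\vv g)^{(k)}(x_0)|\ge c_0>0$ uniformly over $\cG$.

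The crux — and the step I expect to be the main obstacle — is verifying hypothesis (2): translating the hypothesis \eqref{c2}, which is a statement about the vectors $\vv g^{(i)}(x_0)\in\R^2$ spanning $\R^2$ in a quantitative sense (the pair $\vv g',\dots,\vv g^{(l)}$ has a lower bound on its "span quality" at $x_0$), into a lower bound on the scalar quantities $(\tilde\nabla\vv g)^{(k)}(x_0)$. The idea is that $(\tilde\nabla\vv g)^{(k)}(x_0)$ can be written (via the Leibniz rule and the cancellation of the $g_i'g_j'$-type terms) as a sum $\sum_{a+b=k+1,\,a<b} c_{a,b}\bigl(g_1^{(a)}g_2^{(b)}-g_1^{(b)}g_2^{(a)}\bigr)(x_0) = \sum c_{a,b}\det\!\begin{pmatrix}g_1^{(a)}&g_2^{(a)}\\ g_1^{(b)}&g_2^{(b)}\end{pmatrix}(x_0)$ with combinatorial coefficients $c_{a,b}\ne0$; if all these $2\times2$ determinants for $1\le a<b\le l$ vanished, then $\vv g^{(1)}(x_0),\dots,\vv g^{(l)}(x_0)$ would all be proportional to a common vector, contradicting \eqref{c2}. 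A compactness/continuity argument then upgrades the strict non-vanishing to a uniform positive lower bound over the compact family $\cG$. Part~(b) follows by the same template: $\sup_{x\in B}|\tilde\nabla\vv g(x)|$ is a continuous, strictly positive function of $\vv g$ (strict positivity because $\tilde\nabla\vv g\equiv0$ on an interval forces $g_1,g_2$ to be linearly dependent, again contradicting \eqref{c2}), so its infimum over the compact family $\cG$ is a positive constant $\rho_B$.
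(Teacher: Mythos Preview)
The paper does not supply its own proof of Proposition~\ref{prop4.2}; it simply records the statement as ``a simplified version of Proposition~4.1 from \cite{Bernik-Kleinbock-Margulis-01:MR1829381}''. So there is no in-paper argument to compare with, and your proposal must stand on its own.

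Unfortunately there is a genuine gap. Your plan is to apply \cite[Lemma~3.3]{Bernik-Kleinbock-Margulis-01:MR1829381} to $\phi=\tilde\nabla\vv g$ with $p=2l-1$, which requires $\tilde\nabla\vv g\in C^{2l-1}$. But the hypothesis is only $\vv g\in C^l(U)$, so $\tilde\nabla\vv g=g_1g_2'-g_1'g_2\in C^{l-1}$; the derivatives $(\tilde\nabla\vv g)^{(k)}$ for $k\ge l$ need not exist. This already blocks the route you outline. A related error is the claim that ``the skew gradient's derivatives of order $\ge1$ only involve $g_i^{(j)}$ for $j\ge1$'': in fact $(\tilde\nabla\vv g)'=g_1g_2''-g_1''g_2$ still contains the undifferentiated $g_1,g_2$, and more generally the Leibniz expansion of $(\tilde\nabla\vv g)^{(k)}$ always carries the term $g_1g_2^{(k+1)}-g_1^{(k+1)}g_2$. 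So neither the compactness hypothesis~(1) nor the lower-bound hypothesis~(2) of your reduction can be verified as stated: the former fails because $g_i$ (and high derivatives) are uncontrolled by \eqref{c1}, and the latter fails because the relevant derivatives need not exist. The argument in \cite{Bernik-Kleinbock-Margulis-01:MR1829381} does not proceed by differentiating $\tilde\nabla\vv g$ to order $2l-1$; it uses a more structural approach (normalising $\vv g$ via the $SL_2$-invariance of the skew gradient and exploiting \eqref{c2} to reduce to situations where the standard $(C,\alpha)$-good lemmas apply with the available regularity). Your compactness argument for part~(b) is closer to correct in spirit, but note that $\cG$ itself is not assumed compact (only the family of derivatives in \eqref{c1} is), so the continuity-on-a-compact-set step also needs care.
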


The third class of functions that we will be interested in, associated with the map  $ \vv f$ defined  by
\eqref{def_vvf},   is the class

\begin{equation}\label{cG}
\cG:=\left\{\big(\vv u_1\cdot\vv f(x),u_0+\vv u_2\cdot\vv f(x)\big):
\begin{array}{l}
\vv u_1,\vv u_2\in\R^n_1,\\
\vv u_1\cdot\vv u_2=0
\end{array}
\right\}\,.
\end{equation}

\medskip

\noindent In particular, we will make use of the following statement concerning the skew gradient of maps in $\cG$.  It follows on showing that Proposition \ref{prop4.2} is applicable to the specific  $\cG$ given by \eqref{cG}.

\begin{proposition}\label{prop5.3}
Suppose that the map $x\mapsto\vv f(x)$ is $l$-non-degenerate at some point $x_0$ and $\cG$ is given by \eqref{cG}. Then there exists a constant $C>0$ and a neighbourhood $V$ of $x_0$ such that
\begin{itemize}
  \item[{\rm(a)}]
for every $\vv g\in\cG$ the function $|\tilde\nabla\vv g|$ is $(C,\frac{1}{2l-1})$-good on $V$;\\[0ex]
  \item[{\rm(b)}]
for any interval $B\subset V$ there exist a constant $\rho_B>0$ such that
\begin{equation}\label{f1+++}
\inf_{\vv g\in\cG}~\sup_{x\in B}|\tilde\nabla \vv g(x)|\ge \rho_B\,.
\end{equation}
\end{itemize}
\end{proposition}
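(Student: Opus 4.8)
The strategy is to verify that Proposition~\ref{prop4.2} applies to the family $\cG$ given by \eqref{cG}, after which parts (a) and (b) are immediate translations of parts (a) and (b) of that proposition. Thus there are exactly two things to check: the compactness hypothesis \eqref{c1} and the non-degeneracy hypothesis \eqref{c2}. First I would record that, by the definition of non-degeneracy, $\vv f$ being $l$-non-degenerate at $x_0$ means $\vv f$ is $C^l$ on a neighbourhood of $x_0$; fix such a neighbourhood $U$ on which all the derivatives $f_j^{(i)}$ ($1\le i\le l$, $0\le j\le n-1$) are continuous, hence bounded on a compact subinterval.

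\textbf{Checking \eqref{c1}.} Each map $\vv g=(g_1,g_2)\in\cG$ has the form $g_1=\vv u_1\cdot\vv f$, $g_2=u_0+\vv u_2\cdot\vv f$ with $\vv u_1,\vv u_2$ on the unit sphere $\R^n_1$ and $u_0\in\R$; note that $u_0$ is irrelevant for \eqref{c1} since it disappears upon differentiating once. So $g_1'=\vv u_1\cdot\vv f'$ and $g_2'=\vv u_2\cdot\vv f'$, and the family $\{g_i':\vv g\in\cG,\,i=1,2\}$ is contained in $\{\vv u\cdot\vv f':\vv u\in\R^n_1\}$. The map $\vv u\mapsto \vv u\cdot\vv f'$ is continuous from the compact set $\R^n_1$ into $C^{l-1}(U)$ (each coordinate of $\vv f'$ lies in $C^{l-1}$, and a finite linear combination depends continuously on the coefficients in the $C^{l-1}$ norm on a compact subinterval), so its image is compact in $C^{l-1}$. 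Hence \eqref{c1} holds, possibly after shrinking $U$ to a relatively compact subinterval.

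\textbf{Checking \eqref{c2}.} This is the crux. I must show
$$
\inf_{\vv v\in\R_1^2}\ \inf_{\vv g\in\cG}\ \max_{1\le i\le l}|\vv v\cdot\vv g^{(i)}(x_0)|>0.
$$
Write $\vv v=(v_1,v_2)\in\R^2_1$. For $i\ge1$ we have $g_1^{(i)}(x_0)=\vv u_1\cdot\vv f^{(i)}(x_0)$ and $g_2^{(i)}(x_0)=\vv u_2\cdot\vv f^{(i)}(x_0)$, so $\vv v\cdot\vv g^{(i)}(x_0)=(v_1\vv u_1+v_2\vv u_2)\cdot\vv f^{(i)}(x_0)$. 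Since $\vv u_1\perp\vv u_2$ and both have unit norm, the vector $\vv w:=v_1\vv u_1+v_2\vv u_2\in\R^n$ satisfies $\|\vv w\|_2=\|\vv v\|_2=1$. Therefore the double infimum over $(\vv v,\vv g)$ is bounded below by
$$
\inf_{\vv w\in\R^n_1}\ \max_{1\le i\le l}|\vv w\cdot\vv f^{(i)}(x_0)|,
$$
and this last quantity is strictly positive: if it were zero, by compactness of $\R^n_1$ there would exist $\vv w\ne\vv0$ with $\vv w\cdot\vv f^{(i)}(x_0)=0$ for all $1\le i\le l$, contradicting the assumption that the derivatives $\vv f'(x_0),\dots,\vv f^{(l)}(x_0)$ span $\R^n$ ($l$-non-degeneracy). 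This gives \eqref{c2}.

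\textbf{Conclusion.} With \eqref{c1} and \eqref{c2} verified, Proposition~\ref{prop4.2} yields a constant $C>0$ and a neighbourhood $V\subset U$ of $x_0$ such that $|\tilde\nabla\vv g|$ is $(C,\tfrac1{2l-1})$-good on $V$ for all $\vv g\in\cG$, which is (a); and such that for every interval $B\subset V$ there is $\rho_B>0$ with $\sup_{x\in B}|\tilde\nabla\vv g(x)|\ge\rho_B$ for all $\vv g\in\cG$, which is (b). I expect the only mildly delicate point to be the reduction in \eqref{c2} via the observation that an orthonormal pair $\vv u_1,\vv u_2$ together with a unit vector $\vv v\in\R^2$ produces a unit vector $\vv w=v_1\vv u_1+v_2\vv u_2$ in $\R^n$; everything else is routine continuity and compactness.
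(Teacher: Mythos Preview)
Your proposal is correct and follows essentially the same route as the paper: verify hypotheses \eqref{c1} and \eqref{c2} of Proposition~\ref{prop4.2} and read off parts (a) and (b). The key observation in both arguments is identical --- orthonormality of $\vv u_1,\vv u_2$ forces $\vv w=v_1\vv u_1+v_2\vv u_2$ to be a unit vector, reducing \eqref{c2} to the $l$-non-degeneracy of $\vv f$ at $x_0$. One small tightening: in your treatment of \eqref{c1} you say the family $\{g_i'\}$ is \emph{contained in} $\{\vv u\cdot\vv f':\vv u\in\R^n_1\}$ and that the latter is compact, but containment in a compact set does not by itself give compactness; in fact the family \emph{equals} $\{\vv u\cdot\vv f':\vv u\in\R^n_1\}$ (every unit vector arises as some $\vv u_1$ or $\vv u_2$), which is what the paper asserts and what makes the argument go through.
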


\begin{proof}
Since $\vv f$ is $l$-non-degenerate at $x_0$, there exists an open interval $U$ centred at $x_0$ such that $\vv f$ is $C^l$ on the closure of $U$. Hence the family of functions within \eqref{c1}, which is simply $\{\vv u\cdot\vv f':\vv u\in\R^n_1\}$, is compact in $C^{l-1}(U)$ due to the compactness of the unit sphere in $\R^{n}$. Thus the first hypothesis \eqref{c1} is satisfied.

\noindent Next, given $\vv g=(\vv u_1\cdot\vv f,u_0+\vv u_2\cdot\vv f)\in\cG$ and $\vv v=(v_1,v_2)\in\R^2_1$, we have that
$$
\vv v\cdot\vv g=u_0'+\vv u'\cdot\vv f\,,
$$
where
$$
(u_0',\vv u')=v_1(0,\vv u_1)+v_2(u_0,\vv u_2)\,.
$$
Since the vectors $\vv u_1$ and $\vv u_2$ are orthonormal, we have that
$$
\|\vv u'\|_2^2 \, =  \, v_1^2\|\vv u_1\|_2^2+v_2^2\|\vv u_2\|_2^2 \, = \, v_1^2+v_2^2=1\,
$$
and thus \eqref{c2} immediately follows from the $l$-non-degeneracy of $\vv f(x)$ at $x_0$.

The upshot of the above is that the desired statements (a) and (b) of Proposition~\ref{prop5.3} now directly follow on applying  Proposition~\ref{prop4.2}.
\end{proof}

\section{Proof of Proposition~\ref{proposition_curve_Ca_goodness_2}}\label{section_Ca_goodness}

The proof of Proposition~\ref{proposition_curve_Ca_goodness_2} is split into several lemmas.
We will use various properties of multi-vectors and their relations with linear subspaces, which can be found in \cite[\S3]{Beresnevich-SDA1}, \cite{Schmidt-1980} and \cite{Schmidt2}.
We begin with an auxiliary statement that will be helpful for calculating  $\phi_{I,\Gamma}(x)$.

\begin{lemma}\label{hodge}
Suppose that $\Gamma= (\vv v_1,\dots,\vv v_r) \in\Mat_\R^*(n+1,r)$ with $1\le r\le n$ and $\vv v_1,\dots,\vv v_r$ denote the columns of\/ $\Gamma$. Let $I=\{i_1,\dots,i_r\}\subset\{1,\dots,n+1\}$ and $G_{i_1}(x),\ldots,G_{i_r}(x)$ be the corresponding rows of $G(x)$. Then for any collection $\vv a_1,\dots,\vv a_{n+1-r}\in\R^{n+1}$ of linearly independent rows such that $\vv a_i\vv v_j=0$ for all $i=1,\dots,n+1-r$, $j=1,\dots,r$ and
\begin{equation}\label{v+}
\|\vv a_1\we\dots\we\vv a_{n+1-r}\|=\|\vv v_1\we\dots\we\vv v_{r}\|
\end{equation}
we have that
\begin{align}
\label{f13}|\phi_{I,\Gamma}(x)|&=\|G_{i_1}(x)\we\dots\we G_{i_r}(x)\we\vv a_1\we\dots\we\vv a_{n+1-r}\|\\[1ex]
\nonumber&=|\det\big(G_{i_1}(x),\dots,G_{i_r}(x),\vv a_1,\dots,\vv a_{n+1-r}\big)|\,.
\end{align}
Furthermore, $\vv a_1,\dots,\vv a_{n+1-r}$ can be taken to be integer if $\Gamma\in\Mat_\Z^*(n+1,r)$. \end{lemma}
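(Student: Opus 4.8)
\textbf{Proof proposal for Lemma~\ref{hodge}.}
The plan is to interpret $\phi_{I,\Gamma}(x)=\det\big(G_I(x)\Gamma\big)$ via the standard duality between the $r$-th exterior power and the $(n+1-r)$-th exterior power of $\R^{n+1}$, i.e. via the Hodge star (or, equivalently, the correspondence between a rank-$r$ subspace and its orthogonal complement as described in \cite[\S3]{Beresnevich-SDA1} and \cite{Schmidt-1980}). The key observation is that $\det\big(G_I(x)\Gamma\big)$ is, up to sign, the coefficient of the standard basis multivector $\vv e_{i_1}\we\dots\we\vv e_{i_r}$ in the decomposition of $\vv v_1\we\dots\we\vv v_r$, paired against the rows $G_{i_1}(x),\dots,G_{i_r}(x)$; more precisely, $\det\big(G_I(x)\Gamma\big)$ equals the value obtained by contracting the multivector $G_{i_1}(x)\we\dots\we G_{i_r}(x)$ (viewed in $\bigwedge^r(\R^{n+1})^*$) with $\vv v_1\we\dots\we\vv v_r$. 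So the first step is to record this contraction identity cleanly and to fix sign conventions.

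Next I would bring in the complementary vectors $\vv a_1,\dots,\vv a_{n+1-r}$. Since $\vv a_i\vv v_j=0$ for all $i,j$, the $\vv a_i$ span the orthogonal complement of $\Span\{\vv v_1,\dots,\vv v_r\}$, hence $\vv a_1\we\dots\we\vv a_{n+1-r}$ is a scalar multiple of the Hodge dual $\star(\vv v_1\we\dots\we\vv v_r)$. The normalisation \eqref{v+} pins down the absolute value of that scalar to be $1$ (the sign is irrelevant since we only claim equality of absolute values). Therefore, for any fixed linear functional — in particular for the wedge $G_{i_1}(x)\we\dots\we G_{i_r}(x)$ — one has $|\langle G_{i_1}(x)\we\dots\we G_{i_r}(x),\ \vv v_1\we\dots\we\vv v_r\rangle| = |G_{i_1}(x)\we\dots\we G_{i_r}(x)\we \vv a_1\we\dots\we\vv a_{n+1-r}|$, because wedging with a unit-normalised basis of the orthogonal complement realises the same pairing as contracting against the original multivector, up to sign. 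Combining this with the contraction identity from the first step yields the first equality in \eqref{f13}; the second equality is just the fact that the norm of a top-degree multivector in $\R^{n+1}$ is the absolute value of the determinant of the matrix of its components. I would spell this out as a short chain of equalities rather than invoking a black box, citing \cite{Schmidt-1980, Schmidt2} for the multivector norm identities.

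For the final sentence, when $\Gamma\in\Mat_\Z^*(n+1,r)$ I would argue that the lattice $\Lambda:=\{\vv w\in\Z^{n+1}:\vv w\cdot\vv v_j=0,\ 1\le j\le r\}$ is a primitive sublattice of $\Z^{n+1}$ of rank $n+1-r$, and that any integer basis $\vv a_1,\dots,\vv a_{n+1-r}$ of $\Lambda$ satisfies \eqref{v+} exactly — this is the classical fact that a primitive rank-$r$ sublattice and its orthogonal complement have equal covolumes when $\Gamma$ itself spans a primitive sublattice; in general $\|\vv a_1\we\dots\we\vv a_{n+1-r}\|$ and $\|\vv v_1\we\dots\we\vv v_r\|$ are both equal to the covolume of the primitive closure, and dividing $\vv v_1\we\dots\we\vv v_r$ by the content index if necessary does not affect the claimed identity since $\phi_{I,\Gamma}$ scales the same way. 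I would note that this primitivity/covolume statement is standard and reference \cite[\S3]{Beresnevich-SDA1} or \cite{Schmidt-1980}.

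The main obstacle I anticipate is purely bookkeeping: getting the Hodge-duality normalisation and the sign conventions consistent so that the constraint \eqref{v+} translates into the clean equality of \eqref{f13} without stray factors, and making sure the integer case handles the possibility that the columns of $\Gamma$ do not themselves span a primitive sublattice (even though $\Gamma\in\Mat^*_\Z$ only guarantees rank $r$, not primitivity). This is where a careful statement of the multivector facts from \cite{Schmidt-1980} is needed; once those are in place, the proof is a few lines.
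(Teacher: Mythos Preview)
Your proposal is correct and follows essentially the same route as the paper: write $\phi_{I,\Gamma}(x)$ as the inner product $\vv m(x)\cdot\vv w$ via the Laplace identity, then use Hodge duality to convert $|\vv m(x)\cdot\vv w|$ into $|\vv m(x)\we\vv w^\perp|$, noting that any admissible family $\vv a_1,\dots,\vv a_{n+1-r}$ gives $\pm\vv w^\perp$ by the norm constraint \eqref{v+}. The paper's proof is just this, with citations to \cite[Lemma~5E, Lemma~5G]{Schmidt2} and \cite[\S3]{Beresnevich-SDA1} in place of your spelled-out steps.

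One small remark on the integer case: your discussion of primitivity is slightly tangled. You write that both $\|\vv a_1\we\dots\we\vv a_{n+1-r}\|$ and $\|\vv v_1\we\dots\we\vv v_r\|$ equal the covolume of the primitive closure, but the latter is the index times that covolume when $\Gamma$ is not primitive. The paper sidesteps this entirely by citing \cite[Lemma~5G]{Schmidt2}; alternatively, since the lemma only asks that the $\vv a_i$ be integer (not that they form a lattice basis), you can take an integer basis of the orthogonal lattice and rescale one vector by the content index to force \eqref{v+}. Either way the fix is immediate, so this is bookkeeping rather than a gap.
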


\begin{proof}
Let
\begin{equation} \label{def_w}
\vv w:=\vv v_1\we\dots\we\vv v_{r}
\end{equation}
and
\begin{equation} \label{def_G}
{\vv m}(x):=G_{i_1}(x)\wedge\ldots\wedge G_{i_r}(x).
\end{equation}
Then, by the Laplace identity (see, for example, \cite[Equation~(3.3)]{Beresnevich-SDA1} or \cite[Lemma~5E]{Schmidt2}) it follows that
$$
\phi_{I,\Gamma}(x)={\vv m}(x)\cdot{\vv w}\,,
$$
where the `dot' represents  the standard inner product on $\bigwedge^{n-1}(\R^{n+1})$.
Let $\vv w^\perp$ be the Hodge dual of $\vv w$, see \cite[\S3]{Beresnevich-SDA1} for its definition and properties. Since $\vv w$ is decomposable, so is ${\vv w}^{\bot}$. This means that
\begin{equation}\label{f13B}
{\vv w}^{\bot}=\vv a_1\wedge\dots\we\vv a_{n+1-r}
\end{equation}
for some linearly independent rows $\vv a_1,\dots,\vv a_{n+1-r}$ which form a basis
of the linear subspace of $\R^{n+1}$ orthogonal to $\vv v_1,\dots,\vv v_{r}$. Equation \eqref{v+} is a consequence of the Hodge operator being an isometry, see \cite[\S3.2]{Beresnevich-SDA1}.
Furthermore, it follows from~\cite[Lemma~5G]{Schmidt2} that it is possible to choose $\vv a_i$, for all $i=1,\dots,n+1-r$, to be integer vectors in the case $\Gamma$ is an integer matrix.

By  duality (see \cite[Equation~(3.10)]{Beresnevich-SDA1}), we have that
\begin{align*}
|\phi_{I,\Gamma}(x)|&=|{\vv m}(x)\cdot{\vv w}|=|{\vv m}(x)\wedge{\vv w}^{\bot}|
\end{align*}
whence \eqref{f13} follows on substituting \eqref{def_G} and \eqref{f13B}.
\end{proof}

\medskip

\begin{lemma} \label{lemma_curve_case_one}
With reference to Proposition~\ref{proposition_curve_Ca_goodness_2}
\begin{itemize}
\item
Statement \eqref{p1} holds if $r=n-1$ and $I=\{1,\dots,n-1\}$,\\[-1ex]

\item
Statement \eqref{p2} holds if $r=n-1$ and
 $I$ as above.
\end{itemize}
\end{lemma}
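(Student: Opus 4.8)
\textbf{Proof proposal for Lemma~\ref{lemma_curve_case_one}.}
The plan is to compute $\phi_{I,\Gamma}(x)$ explicitly for $r=n-1$, $I=\{1,\dots,n-1\}$, and then recognise it as a function to which Proposition~\ref{prop5.1} applies. Fix $\Gamma=(\vv v_1,\dots,\vv v_{n-1})\in\Mat_\Z^*(n+1,n-1)$. By Lemma~\ref{hodge}, since $r=n-1$ so that $n+1-r=2$, there exist linearly independent integer vectors $\vv a_1,\vv a_2\in\Z^{n+1}$ spanning the orthogonal complement of $\vv v_1,\dots,\vv v_{n-1}$ and satisfying the norm-matching condition \eqref{v+}, and we have
\begin{equation*}
|\phi_{I,\Gamma}(x)|=\big|\det\big(G_1(x),\dots,G_{n-1}(x),\vv a_1,\vv a_2\big)\big|\,,
\end{equation*}
where $G_1(x),\dots,G_{n-1}(x)$ are the first $n-1$ rows of \eqref{subsection_CAcurve_def_gG}; note $G_j(x)=(g_j(x),f_j'(x),\vv e_j)$ in the block notation, where the last $n-1$ entries form $-\vv e_j$ (the negative $j$-th standard basis vector of $\R^{n-1}$). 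First I would expand this $(n+1)\times(n+1)$ determinant along the last $n-1$ columns (the $-\vv e_j$ block coming from the $G_j$ rows). Because those rows contribute a permutation-type pattern, the expansion collapses: the determinant equals, up to sign, a $2\times 2$ determinant in the two ``free'' coordinates of $\vv a_1,\vv a_2$, with coefficients that are linear combinations of $1,x,f_1(x),\dots,f_{n-1}(x)$ and of $f_1'(x),\dots,f_{n-1}'(x)$. More precisely, writing $\vv a_i=(a_{i,0},a_{i,1},\dots,a_{i,n})$, the determinant reduces to a combination of the form $\tilde\nabla\vv g(x)$ for a pair $\vv g=(g_1,g_2)$ of functions, each of which is of the shape $u_0+\vv u\cdot\vv f(x)$ with coefficient vectors built linearly from the entries of $\vv a_1,\vv a_2$.

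The key structural point is that the two functions arising this way are, after normalising by $\|\vv v_1\we\dots\we\vv v_{n-1}\|=\|\vv a_1\we\vv a_2\|$, precisely of the form covered by the family $\cG$ in \eqref{cG}: one component is $\vv u_1\cdot\vv f$ and the other is $u_0+\vv u_2\cdot\vv f$ with $\vv u_1,\vv u_2$ orthonormal — the orthonormality being exactly the content of condition \eqref{v+} together with $\vv a_1\cdot\vv a_2$ being controllable (after a Gram--Schmidt adjustment within the $2$-plane, which does not change the skew gradient up to a nonzero constant factor bounded away from $0$ and $\infty$ by the norm condition). Hence $|\phi_{I,\Gamma}(x)|=c_{\Gamma}\,|\tilde\nabla\vv g(x)|$ for some $\vv g\in\cG$ and a constant $c_\Gamma$ that is exactly $1$ under the normalisation \eqref{v+}. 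Then statement \eqref{p1} for this $I$ follows directly from Proposition~\ref{prop5.3}(a): $|\tilde\nabla\vv g|$ is $(C,\tfrac1{2l-1})$-good on a neighbourhood $V$ of $x_0$, uniformly over $\vv g\in\cG$, so choosing $\cU\subset V$ gives \eqref{p1} with the stated exponent. Likewise \eqref{p2} for this $I$ follows from Proposition~\ref{prop5.3}(b): $\sup_{x\in B}|\tilde\nabla\vv g(x)|\ge\rho_B$ uniformly over $\vv g\in\cG$, and since $I=\{1,\dots,n-1\}\subset\{1,\dots,\max\{r,n-1\}\}=\{1,\dots,n-1\}$, this is an admissible choice of $I$ for part~(ii) of the proposition.

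The main obstacle I expect is the determinant bookkeeping in the first step: one has to carry out the expansion along the $-\vv e_j$ columns carefully, track the signs, and verify that the two resulting scalar functions genuinely have orthonormal coefficient vectors (not merely orthogonal vectors of equal length) so that Proposition~\ref{prop5.3} applies verbatim rather than up to an uncontrolled constant. The norm-matching condition \eqref{v+} from Lemma~\ref{hodge} is what makes this work — it forces $\|\vv a_1\we\vv a_2\|=\|\vv v_1\we\dots\we\vv v_{n-1}\|$, and combined with a routine orthonormalisation of $\{\vv a_1,\vv a_2\}$ inside their span (absorbing a bounded nonzero Jacobian factor) it pins down the constant. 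Everything else is a direct appeal to the results of the previous section, with the choice of $\cU$, $l$, $C$ dictated by Proposition~\ref{prop5.3} applied to the $l$ for which $\vv f$ is $l$-non-degenerate at $x_0$.
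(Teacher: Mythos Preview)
Your plan for part~(i) is essentially the paper's: reduce $|\phi_{I,\Gamma}|$ via Lemma~\ref{hodge} to a $2\times2$ determinant in linear combinations of $\vv f$ and $\vv f'$, then invoke Proposition~\ref{prop5.3}. (The paper carries out the reduction by right-multiplying by a unimodular matrix $\xi_x$; your ``expand along the $-\vv e_j$ columns'' is equivalent.) One case you omit: after arranging $\vv a_1=(0,\vv u_1)$ with $\|\vv u_1\|_2=1$ and $\vv a_2=(u_0,\vv u_2)$ with $\vv u_1\cdot\vv u_2=0$, the vector $\vv u_2$ may vanish. Then the determinant collapses to $|u_0\,\vv u_1\cdot\vv f'(x)|$, which lies in $\cF'$, not $\cG$, and one has to appeal to Proposition~\ref{prop5.1}(b) instead. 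This is a minor patch.

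For part~(ii) there is a genuine gap. Your assertion that $c_\Gamma=1$ under~\eqref{v+} is incorrect, and Gram--Schmidt does not rescue it. Condition~\eqref{v+} forces $\|\vv a_1\we\vv a_2\|=\|\vv w\|$, which with the choices above gives $u_0^2+\|\vv u_2\|_2^2=\|\vv w\|^2$; this places no lower bound on $\|\vv u_2\|_2$. After normalising $\vv u_2$ to land in $\cG$ the prefactor is exactly $\|\vv u_2\|_2$, which can be arbitrarily small, so the uniform bound from Proposition~\ref{prop5.3}(b) does not transfer to $|\phi_{I,\Gamma}|$. Orthonormalising $\vv a_1,\vv a_2$ in $\R^{n+1}$ does not help either: orthonormality of the full $(n{+}1)$-vectors is not the condition defining $\cG$, which requires the \emph{last $n$} coordinates to be orthonormal and one of the first coordinates to vanish --- these constraints are incompatible in general. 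The paper avoids this by a separate compactness argument: writing $\phi_{I,\Gamma}(x)=\vv m(x)\cdot\vv w$ with $\vv m(x)=G_1(x)\we\cdots\we G_{n-1}(x)$, one uses that $\|\vv w\|\ge1$ (since $\Gamma$ is integer of full rank), that the set of unit decomposable multivectors $\vv w'\in\bigwedge^2\R^{n+1}$ is compact, and that $\vv w'\mapsto\sup_{x\in B}|\vv m(x)\cdot\vv w'|$ is continuous and strictly positive (strict positivity being exactly what the $\cF'/\cG$ analysis for part~(i) shows). The infimum over this compact set then furnishes $\rho_B$.
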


\begin{proof}
Although, in the context of Proposition~\ref{proposition_curve_Ca_goodness_2} we are ultimately interested in integer $\Gamma$, it will be necessary for the proof of Lemma~\ref{lemma_curve_case_one} to consider $\Gamma$ lying in the larger set $\Mat^*_\R(n+1,r)$.  With this in mind,
by Lemma~\ref{hodge} there exist linearly independent row-vectors $\vv a_1,\vv a_2\in\R^{n+1}$ such that
$$
|\phi_{I,\Gamma}(x)|=|G_1(x)\wedge\ldots\wedge G_{n-1}(x)\we\vv a_1\we\vv a_2|.
$$
For convenience, and in view of Lemma~\ref{hodge} without loss of generality, we take
$$
\vv a_1=(0,\vv u_1)\qquad\text{and}\qquad\vv a_2=(u_0,\vv u_2)
$$
such that $\vv u_1\cdot\vv u_2=0$, $\vv u_1\in\R^n_1$ and $\vv u_2\in\R^n$,
where $\vv u_i=(u_{i,1},\dots,u_{i,n})$ for $i=1,2$.
Thus, $|\phi_{I,\Gamma}(x)|$ is equal to the absolute value of the determinant of the following $(n+1)\times (n+1)$ matrix:
\begin{equation} \label{subsection_CAcurve_def_det_psi}
\Psi_{x}:=\left(
\begin{array}{cccccccc}
f_1-x f_1'&f_1' & -1&0&\dots& 0\\[1ex]
f_2-x f_2'&f_2' & 0&-1&\dots& 0\\[1ex]
\vdots&\vdots&\vdots&\vdots&\ddots&\vdots\\[1ex]
f_{n-1}-x f_{n-1}'&f_{n-1}' & 0&0&\dots& -1\\[1ex]
0 & u_{1,1} &  u_{1,2} &  u_{1,3} & \dots & u_{1,n}\\[1ex]
u_{0} & u_{2,1} &  u_{2,2} &  u_{2,3} & \dots & u_{2,n}
\end{array}
\right)\,,
\end{equation}

\noindent To proceed, define the following auxiliary $(n+1)\times(n+1)$ matrix

\begin{equation} \label{subsection_CAcurve_def_det_additional_matrix}
\xi_{x}:=\left(
\begin{array}{cccccc}
1 & 0 & 0 & 0 & \dots & 0\\[0ex]
x & 1 & 0 & 0 & \dots&  0\\[0ex]
f_1 & f_1' & 1 & 0 & \dots&  0\\[0ex]
f_2 & f_2' & 0 & 1 & \dots&  0\\[0ex]
\vdots&\vdots&\vdots&\vdots&\ddots&\vdots\\[0ex]
f_{n-1} & f_{n-1}' & 0 & 0 & \dots&  1
\end{array}
\right)
\end{equation}

\noindent Since $\det\xi_x=1$, we have that
\begin{equation} \label{subsection_CAcurve_triple}
|\phi_{I,\Gamma}(x)|=|\det\Psi_x|=|\det\left(\Psi_x\xi_x\right)|.
\end{equation}

\noindent On the other hand, we have that

\begin{equation} \label{subsection_CAcurve_psi_xi}
\Psi_{x}\xi_x=\left(
\begin{array}{cccccccc}
0 & 0 & -1&0&\dots& 0\\[1ex]
0 & 0 & 0&-1&\dots& 0\\[1ex]
\vdots&\vdots&\vdots&\vdots&\ddots&\vdots\\[1ex]
0 & 0 & 0&0&\dots& -1\\[1ex]
\vv u_1\cdot\vv f(x) & \vv u_1\cdot\vv f'(x)     &  u_{1,2} &  u_{1,3} & \dots & u_{1,n}\\[2ex]
u_0+\vv u_2\cdot\vv f(x) & \vv u_2\cdot\vv f'(x) &  u_{2,2} &  u_{2,3} & \dots & u_{2,n}
\end{array}
\right)\,,
\end{equation}
where as usual $\vv f$ is given by~\eqref{def_vvf}.

If
$$
\vv u_2=\vv0
$$
then $u_0\neq0$ and
\begin{equation}\label{v90}
|\phi_{I,\Gamma}(x)|=|u_0\vv u_1\cdot\vv f'(x)|,
\end{equation}
which is a non-zero multiple of the absolute value of a function from the class $\cF'$ defined by~\eqref{cF'}. If $\vv u_2\neq\vv0$, then we have that
\begin{align} \label{subsection_CAcurve_psi_xi_skew_gradient}
|\phi_{I,\Gamma}(x)|&=\big|\tilde{\nabla} \big(\vv u_1\cdot\vv f(x),u_0+\lambda\vv u_2\cdot\vv f(x)\big)\big|\\[1ex]
&\nonumber=\|\vv u_1\|_2\|\vv u_2\|_2~\big|\tilde{\nabla} \big(\|\vv u_1\|_2^{-1}\vv u_1\cdot\vv f(x),\|\vv u_2\|_2^{-1}u_0+\|\vv u_2\|_2^{-1}\vv u_2\cdot\vv f(x)\big)\big|
\end{align}
which is a non-zero multiple of the absolute value of a function from the class $\cG$ defined by~\eqref{cG}. Therefore, on combining Propositions~\ref{prop5.1} and \ref{prop5.3} together with Lemma~\ref{l:13} we conclude that $|\phi_{I,\Gamma}|$ is $(C,\frac{1}{2l-1})$-good on $V$ for a suitably chosen constant $C>0$ and neighbourhood $V$ of $x_0$.  This completes the proof of the first part of the lemma.

Since $\|\Gamma\|_\infty\ge1$, the vector $\vv w$ defined by~\eqref{def_w} satisfies $\|\vv w\|_2\ge1$. By Propositions~\ref{prop5.1} and \ref{prop5.3}, it follows that for any ball $B\subset V$
\begin{equation}\label{f12}
\sup_{x\in B}|\phi_{I,\Gamma}(x)|=\sup_{x\in B}|{\vv m}(x)\cdot{\vv w}|= \|\vv w\|_2\sup_{x\in B}|{\vv m}(x)\cdot{\vv w'}|>0\,,
\end{equation}
where $\vv w'=\vv w/\|\vv w\|_2$ is a unit decomposable multivector. Note that  the set of decomposable unit multivectors $\vv w'\in\bigwedge^{2}(\R^{n+1})$ is compact and $\vv w\mapsto \sup_{x\in B}|{\vv m}(x)\cdot{\vv w}|$ is strictly positive and continuous (see \cite[p.\,218]{Beresnevich-SDA1}). Then taking the infimum in \eqref{f12} over $\vv w'$ implies that the right hand side of \eqref{f12} is bounded away from zero by a constant $\rho_B>0$. This completes the proof of the second part of the lemma.
\end{proof}

\medskip

\begin{lemma}\label{corollary_curve_case_one}
\samepage
With reference to Proposition~\ref{proposition_curve_Ca_goodness_2}
\begin{itemize}
  \item
  Statement \eqref{p1} holds if $r\le n-1$ and $I\subset\{1,\dots,n-1\}$, \\[-1ex]
  \item
  Statement \eqref{p2} holds if $r\le n-1$ for some $I\subset\{1,\dots,n-1\}$.
\end{itemize}
\end{lemma}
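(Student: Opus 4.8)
The plan is to deduce Lemma~\ref{corollary_curve_case_one} from Lemma~\ref{lemma_curve_case_one} by a reduction that replaces the map $\vv f$ with a lower-dimensional sub-map obtained by forgetting the coordinate functions not involved in $I$ (or, for part~(ii), in the winning index set). The key point is that when $I\subset\{1,\dots,n-1\}$ the determinant $\phi_{I,\Gamma}(x)$ only sees the rows $G_{i_1}(x),\dots,G_{i_r}(x)$ of the matrix \eqref{subsection_CAcurve_def_gG}; these rows have the shape $\big(f_{i_k}-xf_{i_k}',\,f_{i_k}',\,0,\dots,0,-1,0,\dots,0\big)$, i.e. they are exactly the first-block rows of the matrix $G$ built from the \emph{sub-map} $\vv g(x):=(x,f_{i_1}(x),\dots,f_{i_r}(x))$ (after discarding the columns corresponding to the forgotten $b_j$'s, which carry only zeros in these rows). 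By Lemma~\ref{lnd}, $\vv g$ is $l$-non-degenerate at $x_0$, so Proposition~\ref{proposition_curve_Ca_goodness_2} — and in particular the already-proved Lemma~\ref{lemma_curve_case_one} applied with $r$ replaced by $r$ and $n-1$ replaced by $r$ — is available for $\vv g$.

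\smallskip

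First I would make the column reduction precise. Fix $I=\{i_1<\dots<i_r\}\subset\{1,\dots,n-1\}$ and $\Gamma\in\Mat_\R^*(n+1,r)$. By Lemma~\ref{hodge} there are linearly independent $\vv a_1,\dots,\vv a_{n+1-r}\in\R^{n+1}$, orthogonal to the columns of $\Gamma$ and with $\|\vv a_1\we\dots\we\vv a_{n+1-r}\|=\|\vv v_1\we\dots\we\vv v_r\|\ge1$, such that
\begin{equation*}
|\phi_{I,\Gamma}(x)|=\big|\det\big(G_{i_1}(x),\dots,G_{i_r}(x),\vv a_1,\dots,\vv a_{n+1-r}\big)\big|.
\end{equation*}
Among the $n+1-r$ rows $\vv a_k$ we may, by elementary row operations on those rows (which change neither the absolute value of the determinant nor the relevant norms up to harmless constants, and can be arranged to respect integrality via Lemma~\ref{hodge}), assume that $n-1-r$ of them are the standard basis rows $\vv e_{r+2},\dots,\vv e_n$ corresponding to the columns indexed by $\{1,\dots,n-1\}\setminus I$ — these columns are zero in every $G_{i_k}(x)$. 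Expanding the determinant along those rows kills exactly those columns and reduces $|\phi_{I,\Gamma}(x)|$ to an $(r+2)\times(r+2)$ determinant of precisely the form \eqref{subsection_CAcurve_def_det_psi} for the sub-map $(x,f_{i_1}(x),\dots,f_{i_r}(x))$, with two remaining rows playing the roles of $(0,\vv u_1)$ and $(u_0,\vv u_2)$. The surviving auxiliary rows inherit orthogonality to the projected columns and inherit the norm bound $\ge1$ from the Hodge-dual normalisation, so they correspond to a $\Gamma'\in\Mat_\R^*(r+2,r)$.

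\smallskip

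Once the reduction is in place, the conclusions follow immediately from Lemma~\ref{lemma_curve_case_one}: part~(i) gives that $|\phi_{I,\Gamma}|$ equals $|\phi_{I',\Gamma'}|$ for the sub-map, hence is $(C,\tfrac{1}{2l-1})$-good on $3^{n+1}B$ with $C$ and $l$ depending only on $\vv f$ and $x_0$ (take a common $C$ and the largest $l$ over the finitely many subsets $I$, using Lemma~\ref{l:13} to harmonise the constants); and part~(ii) gives, for the particular $I\subset\{1,\dots,\max\{r,n-1\}\}$ singled out in Lemma~\ref{lemma_curve_case_one} applied to the sub-map, the lower bound $\sup_{x\in B}|\phi_{I,\Gamma}(x)|\ge\rho_B$, with $\rho_B$ again taken as the minimum of the finitely many constants arising from the different sub-maps. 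I expect the main obstacle to be purely bookkeeping rather than conceptual: one must check carefully that the row operations used to normalise the $\vv a_k$'s can be chosen to (a) preserve the property $\|\vv a_1\we\dots\we\vv a_{n+1-r}\|\ge1$ after projecting out the dead columns — this is where one uses that the $\vv e_j$'s being subtracted lie in the span orthogonal to $\Gamma$, so the projected wedge has norm at least that of the full wedge — and (b) retain integrality when $\Gamma\in\Mat_\Z^*(n+1,r)$, which is handled by invoking the integral choice of $\vv a_k$ in Lemma~\ref{hodge} together with unimodular row operations. Everything else is a direct appeal to the previous lemma and to Lemma~\ref{l:13}.
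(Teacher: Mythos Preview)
Your overall strategy --- reduce to Lemma~\ref{lemma_curve_case_one} for the sub-map $(x,f_{i_1},\dots,f_{i_r})$ --- is correct and is exactly what the paper does. However, the mechanism you propose for the reduction has a genuine gap.

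You claim that, after row operations on the $\vv a_k$'s, $n-1-r$ of them can be taken to be the standard basis vectors $\vv e_{j+2}$ corresponding to the ``dead'' columns $j\in\{1,\dots,n-1\}\setminus I$, and you justify this by asserting that ``the $\vv e_j$'s being subtracted lie in the span orthogonal to $\Gamma$''. This is false in general. The rows $\vv a_1,\dots,\vv a_{n+1-r}$ span exactly the orthogonal complement of the column space of $\Gamma$; hence $\vv e_{j+2}$ lies in that span if and only if the $(j+2)$-th row of $\Gamma$ is zero. Nothing in the hypotheses forces this. (For a concrete example take $n=3$, $r=1$, $I=\{1\}$, and $\Gamma=\vv v=(0,0,0,1)^t$: then the dead column is column $4$, $\vv e_4=(0,0,0,1)$, and $\vv e_4\cdot\vv v=1\neq0$.) Consequently your row-reduction step cannot be carried out, and the downstream claim that ``the surviving auxiliary rows inherit the norm bound $\ge1$'' also collapses.

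The paper avoids this by performing the reduction \emph{before} passing to the Hodge dual. One simply observes that in the rows $G_{i_1}(x),\dots,G_{i_r}(x)$ the columns $j+2$ with $j\notin I$ are identically zero, so the product $G_I(x)\Gamma$ coincides with $\tilde G_{\tilde I}(x)\tilde\Gamma$, where $\tilde G$ is the $(r+2)\times(r+2)$ analogue of $G$ built from the sub-map and $\tilde\Gamma$ is obtained from $\Gamma$ by deleting those rows. This is an elementary matrix identity and needs no assumption on the deleted rows of $\Gamma$. One then has two cases: if $\operatorname{rank}\tilde\Gamma<r$ then $\phi_{I,\Gamma}\equiv0$ and \eqref{p1} is trivial; if $\operatorname{rank}\tilde\Gamma=r$ then, \emph{because $\Gamma$ is integer}, automatically $\tilde\Gamma\in\Mat_\Z^*(r+2,r)$ and Lemma~\ref{lemma_curve_case_one} applies to the sub-map. (Integrality is what guarantees $\|\tilde\Gamma\|_\infty\ge1$ here --- not an inheritance argument from the Hodge dual side.) For part~(ii) one then notes that since $\operatorname{rank}\Gamma=r$ there is always a choice of $I\subset\{1,\dots,n-1\}$ for which $\tilde\Gamma$ has rank $r$.
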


\begin{proof}
With in the context of  Proposition~\ref{proposition_curve_Ca_goodness_2}, we are given that $\Gamma\in\Mat^*_\Z(n+1,r)$.  The fact that $\Gamma$ is an integer is absolutely  crucial in the proof of Lemma~\ref{corollary_curve_case_one}. In short, it allows us to make an induction step to a lower dimension statement.

Fix any multiindex $I$ such that $n\not\in I$ and $n+1\not\in I$. Recall that the matrix $G(x)$ is defined by~\eqref{subsection_CAcurve_def_gG}. Consider the auxiliary $(r+2)\times (r+2)$ matrix $\tilde{G}(x)$ formed by the rows $i_1,\dots,i_r,n,n+1$ and columns $1,2,i_1+2,\dots,i_r+2$ of $G(x)$. Also, consider the matrix $\tilde{\Gamma}$ formed by the rows $1,2,i_1+2,\dots,i_r+2$ of the matrix $\Gamma$.
Observe that
\begin{equation}\label{vb6789}
\phi_{I,\Gamma}(x)=\det\tilde G_{\tilde I}(x)\tilde\Gamma\,,
\end{equation}
where $\tilde I:=\{1,\dots,r\}$. This is because when going from $G_I$ to $\widetilde G_{\widetilde I}$ we simply cross out zero columns and so $G_I(x)\Gamma=\widetilde G_{\widetilde I}(x)\widetilde \Gamma$. Thus the  desired properties of $\phi_{I,\Gamma}(x)$ can be investigated via the lower dimensional matrix $\tilde G$, which has exactly the same structure as $G$. Indeed the matrix $\tilde G$ is the analogue of $G$ with the associated map $\vv f$ replaced by
\begin{equation} \label{f_i}
\tilde{\vv f} (x)    \to (x,f_{i_1}(x),\dots,f_{i_r}(x))  \, .
\end{equation}
 Note that by Lemma~\ref{lnd}, the map $\tilde{\vv f}$ is $l$-non-degenerate at $x_0$.  With this in mind and without loss of generality assuming that $\phi_{I,\Gamma}$  is given by \eqref{vb6789},  we are in the position to prove the lemma.

 Suppose to start with that $\tilde\Gamma$ has rank $<r$.  Then,  we have that  $\phi_{I,\Gamma}$ is identically zero and it follows that  $|\phi_{I,\Gamma}|$ is $(C,\alpha)$-good for any choice of $C$ and $\alpha$.   Now suppose that $\tilde\Gamma$ has rank exactly $r$,  that is $\tilde{\Gamma}\in\Mat^*_\Z(r+2,r)$. Then,  on applying Lemma~\ref{lemma_curve_case_one} with $\tilde{\vv f} $ in place of $\vv f$, $\tilde\Gamma$ in place of $\Gamma$, $r+1$ in place of $n$, $\tilde{G}(x)$ in place of $G(x)$ and $\tilde I$ in place of $I$, we complete  the proof of the  first part of  Lemma~\ref{corollary_curve_case_one}. To verify the second part it remains to note that, since $\rank\Gamma=r$, there is always a choice of $I=\{i_1<\dots<i_r\}\subset\{1,\dots,n-1\}$ such that $\tilde\Gamma$ defined above has rank $r$.
\end{proof}

\medskip

\begin{lemma} \label{lemma_curve_case_two}
With reference to Proposition~\ref{proposition_curve_Ca_goodness_2}
\begin{itemize}
  \item
  Statement \eqref{p1} holds if $r=n$ and $I=\{1,\dots,n\}$, \\[-1ex]
  \item
  Statement \eqref{p2} holds if $r=n$ and $I$ as above.
\end{itemize}
\end{lemma}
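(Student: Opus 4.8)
\textbf{Proof plan for Lemma~\ref{lemma_curve_case_two}.} The plan is to treat the case $r=n$, $I=\{1,\dots,n\}$ in a manner parallel to Lemma~\ref{lemma_curve_case_one}, but now the orthogonal complement of $\Gamma$ is one-dimensional. First I would invoke Lemma~\ref{hodge} to write $|\phi_{I,\Gamma}(x)| = |\det(G_1(x),\dots,G_n(x),\vv a)|$ for a single nonzero row vector $\vv a = (u_0,\vv u) \in \R^{n+1}$ (which may be taken integer when $\Gamma$ is integer, though for the goodness part we again want to allow $\vv a$ to range over the whole unit sphere). Writing out this $(n+1)\times(n+1)$ determinant, where the first $n$ rows come from \eqref{subsection_CAcurve_def_gG} with the last row of $G$ (the row $(1,0,\dots,0)$) excluded, I expect to be able to multiply on the right by the unimodular matrix $\xi_x$ of \eqref{subsection_CAcurve_def_det_additional_matrix} exactly as in the proof of Lemma~\ref{lemma_curve_case_one}. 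After this change of variables the determinant should collapse to a scalar multiple of $u_0 + \vv u\cdot\vv f(x)$, i.e. a nonzero multiple of a function from the class $\cF$ defined in \eqref{cF}.

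Once that identity is in hand, part~(i) follows immediately: by Proposition~\ref{prop5.1}(a) every function in $\cF$ is $(C,\tfrac1l)$-good on a neighbourhood $V$ of $x_0$, hence — using Lemma~\ref{l:13} to absorb the scalar multiple and to weaken $\tfrac1l$ to $\tfrac{1}{2l-1}$ — the function $|\phi_{I,\Gamma}|$ is $(C,\tfrac{1}{2l-1})$-good on $3^{n+1}B$ for $B\subset\cU$ small enough, which is \eqref{p1}. For part~(ii), since $\Gamma\in\Mat_\Z^*(n+1,n)$ we have $\|\Gamma\|_\infty\ge1$, so the Hodge dual $\vv a=\vv w^\perp$ has $\|\vv a\|_2=\|\vv w\|_2\ge1$; writing $\vv a = \|\vv a\|_2\,\vv a'$ with $\vv a'$ on the unit sphere, the corresponding normalised function lies in $\cF$, and Proposition~\ref{prop5.1}(c) gives $\sup_{x\in B}|\phi_{I,\Gamma}(x)| \ge \|\vv a\|_2\,\rho_B \ge \rho_B$, which is \eqref{p2}.

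The one point requiring genuine care — and the main (albeit mild) obstacle — is the explicit determinant computation showing that $\det(\Psi_x\xi_x)$ degenerates to $\pm(u_0+\vv u\cdot\vv f(x))$. In the $r=n-1$ case of Lemma~\ref{lemma_curve_case_one} there were two extra rows $\vv a_1,\vv a_2$ producing either the $\cF'$-expression or the skew gradient of a $\cG$-map; here there is only one extra row and no $f'$-column survives after the $\xi_x$-multiplication, so I expect the first $n-1$ rows to become $(0,0,-1,0,\dots)$-type rows with $-I_{n-1}$ in the last block, the $n$-th row (coming from the $(x_1,-1)$ row of $G$) to become $(0,*,\dots)$, and expanding along the columns carrying the identity block should leave a $2\times 2$ (or even $1\times1$) determinant whose value is the claimed affine function of $\vv f$. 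I would verify this by the same block-elimination bookkeeping used in \eqref{subsection_CAcurve_psi_xi}, taking care with signs; no new idea is needed beyond what Lemma~\ref{lemma_curve_case_one} already supplies.
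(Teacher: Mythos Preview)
Your proposal is correct and follows essentially the same route as the paper: apply Lemma~\ref{hodge} to reduce to a single determinant with one extra row $\vv a=(a_0,\dots,a_n)$, right-multiply by $\xi_x$, and observe that the resulting determinant collapses to $|a_0+\vv u\cdot\vv f(x)|$, a nonzero multiple of a function in $\cF$; then invoke Proposition~\ref{prop5.1}(a),(c) together with Lemma~\ref{l:13}. The only detail to watch in the computation is that after multiplying by $\xi_x$ the $n$-th row (coming from $(x,-1,0,\dots,0)$) becomes $(0,-1,0,\dots,0)$, so expanding along it kills the column containing the $f'$-terms and leaves precisely the affine expression $a_0+a_1x+\sum_{j} a_{j+1}f_j(x)$, confirming your expectation.
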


\begin{proof}
By Lemma~\ref{hodge}, $|\phi_{I,\Gamma}(x)|=|\det\Psi_x|$, where

\begin{equation} \label{lemma_curve_case_two_def_psi}
\Psi_x:=\left(
\begin{array}{cccccccc}
f_1-x f_1'&f_1' & -1&0&\dots& 0\\[1ex]
f_2-x f_2'&f_2' & 0&-1&\dots& 0\\[1ex]
\vdots&\vdots&\vdots&\vdots&\ddots&\vdots\\[1ex]
f_{n-1}-x f_{n-1}'&f_{n-1}' & 0&0&\dots& -1\\[1ex]
x & -1 & 0 & 0 & \dots & 0\\[1ex]
a_0 & a_1 & a_2 & a_3 & \dots & a_{n}
\end{array}
\right)
\end{equation}
for some non-zero integer vector $(a_0,\dots,a_{n})$. With $\xi_x$ given by~\eqref{subsection_CAcurve_def_det_additional_matrix}, we obtain that

$$
\Psi_x\xi_x=\left(
\begin{array}{cccccccc}
0 & 0 & -1&0&\dots& 0\\[1ex]
0 & 0 & 0&-1&\dots& 0\\[1ex]
\vdots&\vdots&\vdots&\vdots&\ddots&\vdots\\[1ex]
0 & 0 & 0&0&\dots& -1\\[1ex]
0 & -1 &  0 &  0 & \dots & 0\\[1ex]
a_{0}+xa_{1}+\sum_{j=1}^{n-1}a_{j+1}f_j(x) & a_{1}+\sum_{j=1}^{n-1}a_{j+1}f_j' &  0 &  0 & \dots & 0
\end{array} \, .
\right)\, .
$$
Hence
$$
|\phi_{I,\Gamma}(x)|=\left|\det\Psi_x\xi_x\right|=|a_{0}+xa_{1}+\sum_{j=1}^{n-1}a_{j+1}f_j(x)|
$$
is a constant multiple of a function from  the  class $\cF$ defined by \eqref{cF}. Since $(a_0,\dots,a_n)$ is a non-zero integer vector, the constant multiple in question is $\ge1$. Therefore, the  lemma readily follows form Proposition~\ref{prop5.1} (parts (a) and (c))  together  with Lemma~\ref{l:13}.
\end{proof}

\begin{lemma}\label{corollary_curve_case_two}
With reference to Proposition~\ref{proposition_curve_Ca_goodness_2}
\begin{itemize}
  \item
  Statement \eqref{p1} holds if $r\le n$, $n\in I$ and $n+1\not \in I$.
\end{itemize}
\end{lemma}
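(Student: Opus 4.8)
The plan is to repeat, in the present configuration, the column–deletion reduction used in the proof of Lemma~\ref{corollary_curve_case_one}: one crosses out the zero columns of $G_I(x)$ to arrive at a genuinely lower–dimensional instance of the same problem and then quotes a case already settled. The point to watch is that, because here $n\in I$ while $n+1\notin I$, the columns that survive reconstitute the matrix \eqref{subsection_CAcurve_def_gG} of a smaller curve \emph{with its bottom row deleted}, so the reduced instance is the one handled by Lemma~\ref{lemma_curve_case_two} (the case where the rank equals the ambient dimension and $I$ is the full row set minus its last element) rather than by Lemma~\ref{lemma_curve_case_one}.

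Concretely, I would fix $I=\{i_1<\dots<i_r\}$ with $i_r=n$ and $n+1\notin I$, and put $I'=\{i_1,\dots,i_{r-1}\}\subseteq\{1,\dots,n-1\}$. Reading off \eqref{subsection_CAcurve_def_gG}, for $k<r$ the $i_k$-th row of $G(x)$ is supported on the columns $1,2,i_k+2$, and the $n$-th row on the columns $1,2$; hence every row of $G_I(x)$ is supported on the $(r+1)$-element set $J=\{1,2\}\cup\{i_k+2:1\le k\le r-1\}$. Therefore $G_I(x)\Gamma=\tilde G(x)\tilde\Gamma$, where $\tilde G(x)$ is the $r\times(r+1)$ submatrix of $G_I(x)$ on the columns in $J$ and $\tilde\Gamma\in\Mat_\Z(r+1,r)$ is the submatrix of $\Gamma$ on the rows in $J$; consequently $\phi_{I,\Gamma}(x)=\det\bigl(\tilde G(x)\tilde\Gamma\bigr)$. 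A direct inspection identifies $\tilde G(x)$ with the matrix \eqref{subsection_CAcurve_def_gG} of the curve $\tilde{\vv f}(x):=(x,f_{i_1}(x),\dots,f_{i_{r-1}}(x))$ in $\R^{r}$, after deletion of its bottom (i.e.\ $(r+1)$-st) row. In other words, in the notation of Proposition~\ref{proposition_curve_Ca_goodness_2} read with $n$ replaced by $r$, one has $\phi_{I,\Gamma}=\phi_{\tilde I,\tilde\Gamma}$ with $\tilde I=\{1,\dots,r\}$, and by Lemma~\ref{lnd} the map $\tilde{\vv f}$ is $l$-non-degenerate at $x_0$.

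I would then split on $\rank\tilde\Gamma$. If $\rank\tilde\Gamma<r$, then $\tilde G(x)\tilde\Gamma$ is singular for every $x$, so $\phi_{I,\Gamma}\equiv0$ and $|\phi_{I,\Gamma}|$ is $(C,\tfrac{1}{2l-1})$-good for any $C$. If $\rank\tilde\Gamma=r$, that is $\tilde\Gamma\in\Mat^*_\Z(r+1,r)$, then, $\tilde I$ being precisely $\{1,\dots,r\}$, Lemma~\ref{lemma_curve_case_two} applies with $r$, $\tilde{\vv f}$, $\tilde\Gamma$, $\tilde G$, $\tilde I$ in the roles of $n$, $\vv f$, $\Gamma$, $G$, $I$, and yields a neighbourhood of $x_0$, depending only on $\vv f$ and the choice of $I'$, on which $|\phi_{I,\Gamma}|$ is $(C,\tfrac{1}{2l-1})$-good; this goodness is inherited from Proposition~\ref{prop5.1}(a) through Lemma~\ref{l:13}, so it genuinely holds on a fixed neighbourhood of $x_0$ and not merely on a dilate of $B$. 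Since there are only finitely many admissible $I$, I would intersect the finitely many neighbourhoods so produced and shrink $\cU$ so that $3^{n+1}\cU$ lies inside the intersection; this delivers a single $\cU$, a single $l$, and a single $C$ for which \eqref{p1} holds on $3^{n+1}B$ for every $B\subset\cU$, every admissible $I$, and every $\Gamma\in\Mat^*_\Z(n+1,r)$. The sub-case $r=1$ (so $I=\{n\}$ and $I'$ is empty) is even more direct: $\phi_{I,\Gamma}$ is then an affine function, hence a scalar multiple of a function in the class $\cF$, and Proposition~\ref{prop5.1}(a) together with Lemma~\ref{l:13} finish it at once.

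I do not anticipate a serious obstacle. The one step demanding real care is the bookkeeping in the second paragraph: one must verify that, once the zero columns are discarded, what remains is the $\tilde{\vv f}$-version of \eqref{subsection_CAcurve_def_gG} \emph{with its bottom row absent}. It is exactly the presence of the $n$-th row of $G$ inside $G_I$ (forced by $n\in I$) together with the absence of the $(n+1)$-st row (forced by $n+1\notin I$) that decides whether the reduced instance is governed by Lemma~\ref{lemma_curve_case_two} or by Lemma~\ref{lemma_curve_case_one}; everything after that is the routine descent to lower dimension already carried out for Lemma~\ref{corollary_curve_case_one}.
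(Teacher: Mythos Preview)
Your proposal is correct and follows essentially the same approach as the paper's own proof: both reduce to the lower-dimensional curve $\tilde{\vv f}(x)=(x,f_{i_1}(x),\dots,f_{i_{r-1}}(x))$ by discarding the zero columns of $G_I(x)$, split on the rank of the resulting $\tilde\Gamma$, and invoke Lemma~\ref{lemma_curve_case_two} in the full-rank case (with the $r=1$ case handled separately via the class $\cF$). The only cosmetic difference is that the paper packages the reduced data as a full $(r+1)\times(r+1)$ matrix $\tilde G(x)$ built from rows $i_1,\dots,i_r,n+1$ and columns $1,2,i_1+2,\dots,i_{r-1}+2$ of $G(x)$ and then selects $\tilde G_{\tilde I}$, whereas you go straight to the $r\times(r+1)$ submatrix; these are the same object.
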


\begin{proof}
To start with observe that when  $r=1$, we necessarily have that $I=\{n\}$ and  thus $G_I(x)\Gamma$ is either identically zero or is a non-zero linear function. In the former case  it easily follows  that $|\phi_{I,\Gamma}|$ is $(C,\alpha)$-good  for any $C$ and $\alpha$.  In the latter case, $\phi_{I,\Gamma} $ it is a multiple of an element of the class $\cF$ defined by~\eqref{cF} and so by Lemma~\ref{l:13}, $|\phi_{I,\Gamma}|$ is $(C,\frac1l)$-good on some neighbourhood of $x_0$.

Without loss of generality,  we assume that $ r > 1$.
Fix any multiindex $I$ such that $n\in I$ and $n+1\not\in I$. Recall that the matrix $G(x)$ is defined by~\eqref{subsection_CAcurve_def_gG}. Consider the auxiliary $(r+1)\times (r+1)$ matrix $\tilde{G}(x)$ formed by the rows $i_1,\dots,i_r,n+1$ and columns $1,2,i_1+2,\dots,i_{r-1}+2$ of $G(x)$. Note that $i_r=n$. Also, consider the matrix $\tilde{\Gamma}$  formed by the rows $1,2,i_1+2,\dots,i_{r-1}+2$ of the matrix $\Gamma$.
Observe that
\begin{equation}\label{vb6789B}
\phi_{I,\Gamma}(x)=\det\tilde G_{\tilde I}(x)\tilde\Gamma\,,
\end{equation}
where $\tilde I=\{1,\dots,r\}$.  Thus the  desired properties of $\phi_{I,\Gamma}(x)$ can be investigated via the lower dimensional matrix $\tilde G$, which has exactly the same structure as $G$. Indeed the matrix $\tilde G$ is the analogue of $G$ with the associated $\vv f$ replaced by
\begin{equation} \label{f_iB}
\tilde{\vv f} (x)    \to  (x,f_{i_1}(x),\dots,f_{i_{r-1}}(x))  \, .
\end{equation}
Note that by Lemma~\ref{lnd}, the map $\tilde{\vv f}$ is $l$-non-degenerate at $x_0$.  With this in mind and without loss of generality assuming that $\phi_{I,\Gamma}$  is given by \eqref{vb6789B},  we are in the position to prove the lemma.

 Suppose to start with that $\tilde\Gamma$ has rank $<r$.  Then, we have that  $\phi_{I,\Gamma}$ is identically zero and it follows that  $|\phi_{I,\Gamma}|$ is $(C,\alpha)$-good for any choice of $C$ and $\alpha$.   Now suppose that $\tilde\Gamma$ has rank exactly $r$,  that is $\tilde{\Gamma}\in\Mat^*_\Z(r+1,r)$. Then,  on applying Lemma~\ref{lemma_curve_case_two} with $\tilde{\vv f} $ in place of $\vv f$, $\tilde\Gamma$ in place of $\Gamma$, $r+1$ in place of $n$, $\tilde{G}(x)$ in place of $G(x)$ and $\tilde I$ in place of $I$, completes the proof of  Lemma~\ref{corollary_curve_case_two}.
\end{proof}

\medskip

\begin{lemma} \label{lemma_curve_case_xhree}
With reference to Proposition~\ref{proposition_curve_Ca_goodness_2}
\begin{itemize}
  \item
  Statement \eqref{p1} holds if $r=n$ and $I=\{1,\dots,n-1,n+1\}$.
\end{itemize}
\end{lemma}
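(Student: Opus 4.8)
The plan is to follow the scheme of the proofs of Lemmas~\ref{lemma_curve_case_one} and \ref{lemma_curve_case_two}: reduce $|\phi_{I,\Gamma}|$ to the absolute value of a single scalar function via a determinant computation, recognise that function as a nonzero constant multiple of a member of the model class $\cF'$ of \eqref{cF'}, and conclude by Proposition~\ref{prop5.1}(b) together with Lemma~\ref{l:13}. First I would invoke Lemma~\ref{hodge}. Here $r=n$, so $n+1-r=1$ and there is a single row vector $\vv a_1=(a_0,a_1,\dots,a_n)$, orthogonal to the columns of $\Gamma$, with $\|\vv a_1\|=\|\vv v_1\we\dots\we\vv v_n\|\ge\|\Gamma\|_\infty\ge1$; in particular $\vv a_1\ne\vv0$. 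Lemma~\ref{hodge} then gives $|\phi_{I,\Gamma}(x)|=|\det\Psi_x|$, where $\Psi_x$ is the $(n+1)\times(n+1)$ matrix whose first $n-1$ rows are rows $1,\dots,n-1$ of $G(x)$ in \eqref{subsection_CAcurve_def_gG} (those containing $f_j-xf_j'$, $f_j'$ and a single $-1$), whose $n$-th row is $(1,0,\dots,0)$ (i.e.\ row $n+1$ of $G(x)$), and whose last row is $\vv a_1$.

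Next, exactly as in the proof of Lemma~\ref{lemma_curve_case_two}, I would right-multiply $\Psi_x$ by the unimodular matrix $\xi_x$ of \eqref{subsection_CAcurve_def_det_additional_matrix}. A direct computation shows that each of the first $n-1$ rows of $\Psi_x\xi_x$ collapses to $(0,\dots,0,-1,0,\dots,0)$ with the $-1$ in column $k+2$, that the $n$-th row remains $(1,0,\dots,0)$, and that the last row becomes
\[
\Bigl(a_0+a_1x+\sum_{j=1}^{n-1}a_{j+1}f_j(x),\ \ a_1+\sum_{j=1}^{n-1}a_{j+1}f_j'(x),\ \ a_2,\,a_3,\,\dots,\,a_n\Bigr)\,.
\]
Since column $2$ of $\Psi_x\xi_x$ has its only nonzero entry in the last row, expanding $\det(\Psi_x\xi_x)$ along that column --- the remaining $n\times n$ minor being, after reordering rows, diagonal with entries $\pm1$ --- yields
\[
|\phi_{I,\Gamma}(x)|=|\det\Psi_x|=|\det(\Psi_x\xi_x)|=\left|a_1+\sum_{j=1}^{n-1}a_{j+1}f_j'(x)\right|\,.
\]
As $\vv f'(x)=(1,f_1'(x),\dots,f_{n-1}'(x))$ by \eqref{def_vvf}, the right-hand side equals $|\vv u\cdot\vv f'(x)|$ with $\vv u:=(a_1,\dots,a_n)\in\R^n$.

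It then remains to split into two cases. If $\vv u=\vv0$, then $\phi_{I,\Gamma}\equiv0$, which is trivially $(C,\alpha)$-good for every $C,\alpha>0$; this case genuinely occurs --- for instance whenever all columns of $\Gamma$ lie in the hyperplane $\{v_0=0\}$ --- and should not be overlooked. If $\vv u\ne\vv0$, then $|\phi_{I,\Gamma}(x)|=\|\vv u\|_2\,\bigl|(\vv u/\|\vv u\|_2)\cdot\vv f'(x)\bigr|$ is a positive constant multiple of the absolute value of a function in the class $\cF'$ of \eqref{cF'}; by Proposition~\ref{prop5.1}(b) such functions are $\bigl(C,\tfrac{1}{l-1}\bigr)$-good on a neighbourhood $V$ of $x_0$, so by Lemma~\ref{l:13} so is $|\phi_{I,\Gamma}|$, and since $l\ge2$ we have $\tfrac{1}{2l-1}\le\tfrac{1}{l-1}$, whence $|\phi_{I,\Gamma}|$ is also $\bigl(C,\tfrac{1}{2l-1}\bigr)$-good on $V$; shrinking $\cU$ so that $3^{n+1}\cU\subset V$ gives \eqref{p1} on $3^{n+1}B$ for every subinterval $B\subset\cU$. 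As usual, the interval $\cU$, the integer $l$ (the order of non-degeneracy of $\vv f$ at $x_0$) and the constant $C$ are fixed uniformly across the finitely many cases treated in Lemmas~\ref{lemma_curve_case_one}--\ref{lemma_curve_case_xhree}. The only real work here is the bookkeeping in the computation of $\Psi_x\xi_x$ and its determinant, which runs parallel to Lemma~\ref{lemma_curve_case_two}; everything else is a direct application of results already in place, the one genuine pitfall being the need to treat the degenerate sub-case $\vv u=\vv0$ separately.
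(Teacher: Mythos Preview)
Your proof is correct and follows essentially the same route as the paper: apply Lemma~\ref{hodge} to write $|\phi_{I,\Gamma}(x)|=|\det\Psi_x|$, right-multiply by the unimodular matrix $\xi_x$ of \eqref{subsection_CAcurve_def_det_additional_matrix}, and read off $|\phi_{I,\Gamma}(x)|=\bigl|a_1+\sum_{j=1}^{n-1}a_{j+1}f_j'(x)\bigr|$, which is either identically zero or a positive multiple of a function in $\cF'$, so Proposition~\ref{prop5.1}(b) and Lemma~\ref{l:13} finish the job. Your treatment is slightly more explicit than the paper's (the column-$2$ expansion, the example for the degenerate case $\vv u=\vv0$, and the remark that $\tfrac{1}{2l-1}\le\tfrac{1}{l-1}$), but the argument is the same.
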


\medskip

\begin{proof}
By Lemma~\ref{hodge}, $|\phi_{I,\Gamma}(x)|=|\det\Psi_x|$, where

\begin{equation} \label{lemma_curve_case_xhree_def_psi}
\Psi_x:=\left(
\begin{array}{cccccccc}
f_1-x f_1'&f_1' & -1&0&\dots& 0\\[1ex]
f_2-x f_2'&f_2' & 0&-1&\dots& 0\\[1ex]
\vdots&\vdots&\vdots&\vdots&\ddots&\vdots\\[1ex]
f_m-x f_m'&f_m' & 0&0&\dots& -1\\[1ex]
1 & 0 & 0 & 0 & \dots & 0\\[1ex]
a_0 & a_1 & a_2 & a_3 & \dots & a_{n}
\end{array}
\right)
\end{equation}

\noindent for some non-zero integer vector $(a_0,\dots,a_{n})$. With $\xi_x$ given by~\eqref{subsection_CAcurve_def_det_additional_matrix}, we obtain that

$$
\Psi_x\xi_x=\left(
\begin{array}{cccccccc}
0 & 0 & -1&0&\dots& 0\\[1ex]
0 & 0 & 0&-1&\dots& 0\\[1ex]
\vdots&\vdots&\vdots&\vdots&\ddots&\vdots\\[1ex]
0 & 0 & 0&0&\dots& -1\\[1ex]
1 & 0 &  0 &  0 & \dots & 0\\[1ex]
a_{0}+xa_{1}+\sum_{k=1}^{n-1}a_{k+1}f_k & a_{1}+\sum_{k=1}^{n-1}a_{k+1}f_k' &  0 &  0 & \dots & 0
\end{array}
\right).
$$
Hence
$$
|\phi_{I,\Gamma}(x)|=\left|\det\Psi_x\xi_x\right|=|a_{1}+\sum_{k=1}^{n-1}a_{k+1}f_k'(x)|
$$
is either identically zero or a constant multiple of a function from $\cF'$. In the latter case the claim of the lemma readily follow form Proposition~\ref{prop5.1}(b) combined with Lemma~\ref{l:13}. In the case $|\phi_{I,\Gamma}(x)|$ is identically zero the claim is trivial. Indeed, in that case $|\phi_{I,\Gamma}|$ is $(C,\alpha)$-good for any choice of $C$ and $\alpha$.
\end{proof}

\medskip

\begin{lemma}\label{corollary_curve_case_xhree}
With reference to Proposition~\ref{proposition_curve_Ca_goodness_2}
\begin{itemize}
  \item
  Statement \eqref{p1} holds if $r\le n$, $n\not\in I$ and $n+1\in I$.
\end{itemize}
\end{lemma}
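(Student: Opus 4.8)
The plan is to mirror the inductive reduction that has already been used for Lemma~\ref{corollary_curve_case_two}, but now handling the case $n\notin I$, $n+1\in I$ via the base case Lemma~\ref{lemma_curve_case_xhree} instead of Lemma~\ref{lemma_curve_case_two}. First I would dispose of the trivial small-$r$ case: when $r=1$ we necessarily have $I=\{n+1\}$, so $G_I(x)\Gamma$ is (the dot product of $\Gamma$ with) the last row of $G(x)$, namely $(1,0,\dots,0)$; hence $\phi_{I,\Gamma}$ is either identically zero or a non-zero constant, and in either case $|\phi_{I,\Gamma}|$ is $(C,\alpha)$-good for any $C,\alpha$ on any interval. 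So we may assume $r>1$.

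Next, for $r>1$ fix a multiindex $I=\{i_1<\dots<i_r\}$ with $i_r=n+1$ and $n\notin I$, so that $\{i_1,\dots,i_{r-1}\}\subset\{1,\dots,n-1\}$. Exactly as in the proof of Lemma~\ref{corollary_curve_case_two}, I would form the auxiliary $(r+1)\times(r+1)$ matrix $\widetilde G(x)$ from the rows $i_1,\dots,i_{r-1},n,n+1$ and the columns $1,2,i_1+2,\dots,i_{r-1}+2$ of $G(x)$, and the matrix $\widetilde\Gamma$ from the rows $1,2,i_1+2,\dots,i_{r-1}+2$ of $\Gamma$. The point, just as before, is that the columns of $G_I$ not retained are identically zero, so $G_I(x)\Gamma=\widetilde G_{\widetilde I}(x)\widetilde\Gamma$ with $\widetilde I=\{1,\dots,r\}$, and therefore $\phi_{I,\Gamma}(x)=\det\widetilde G_{\widetilde I}(x)\widetilde\Gamma$. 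The matrix $\widetilde G$ has the same structure as the matrix $G$ in dimension $n'=r$, built from the map $\tvv f(x)=(x,f_{i_1}(x),\dots,f_{i_{r-1}}(x))$, which is $l$-non-degenerate at $x_0$ by Lemma~\ref{lnd}. Crucially $\widetilde I$ here omits the index $n'=r$ of $\widetilde G$ (it picks the first $r$ rows, which are $i_1,\dots,i_{r-1},n$, i.e. rows $1,\dots,r-1,r$ of $\widetilde G$ in its own numbering — wait, this is the subtle bookkeeping point) but includes the last index $r+1$; so in the numbering of $\widetilde G$ the selected rows are precisely $\{1,\dots,r-1,r+1\}$, which is exactly the index set handled by Lemma~\ref{lemma_curve_case_xhree}.

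Then I would split on the rank of $\widetilde\Gamma$: if $\rank\widetilde\Gamma<r$ then $\phi_{I,\Gamma}\equiv0$ and $(C,\alpha)$-goodness is automatic; if $\rank\widetilde\Gamma=r$, so $\widetilde\Gamma\in\Mat^*_\Z(r+1,r)$, I apply Lemma~\ref{lemma_curve_case_xhree} with $\tvv f$ in place of $\vv f$, $r+1$ in place of $n$, $\widetilde G(x)$ in place of $G(x)$, $\widetilde\Gamma$ in place of $\Gamma$ and $\widetilde I=\{1,\dots,r-1,r+1\}$ in place of $I$, which gives that $|\phi_{I,\Gamma}|$ is $(C,\tfrac{1}{2l-1})$-good (or, in the degenerate sub-case there, good for any $C,\alpha$). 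Shrinking $\cU$ so that all finitely many such applications are valid simultaneously yields a single $\cU$, $l$ and $C$ as required. The main obstacle — and really the only thing needing care — is getting the row/column bookkeeping right so that the reduced index set $\widetilde I$ inside $\widetilde G$ is genuinely of the form $\{1,\dots,r-1,r+1\}$ (i.e. omitting $n'=r$, including $n'+1=r+1$), matching Lemma~\ref{lemma_curve_case_xhree} rather than Lemma~\ref{lemma_curve_case_two}; everything else is a verbatim repeat of the argument for Lemma~\ref{corollary_curve_case_two}. Note that no statement \eqref{p2} is claimed here, consistent with \eqref{p2} only being required for some $I\subset\{1,\dots,\max\{r,n-1\}\}$, which is already supplied by Lemmas~\ref{lemma_curve_case_one} and \ref{corollary_curve_case_one}.
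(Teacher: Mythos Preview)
Your proposal is correct and follows essentially the same approach as the paper's own proof. In fact, your bookkeeping $\widetilde I=\{1,\dots,r-1,r+1\}$ is the right index set for applying Lemma~\ref{lemma_curve_case_xhree}; the paper writes $\widetilde I=\{1,\dots,r\}$, which is evidently a copy-paste slip from Lemma~\ref{corollary_curve_case_two}, so your version is actually the more accurate one.
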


\medskip

\begin{proof}

To start with observe that when  $r=1$, then we necessarily have that $I=\{n+1\}$ and  thus $G_I(x)\Gamma$ is either identically zero or is a non-zero constant. In the former case  it easily follows  that $|\phi_{I,\Gamma}|$ is $(C,\alpha)$-good  for any $C$ and $\alpha$.  In the latter case, $\phi_{I,\Gamma} $ it is a multiple of an element of the class $\cF$ defined by~\eqref{cF} and so by Lemma~\ref{l:13}, $|\phi_{I,\Gamma}|$ is $(C,\frac1l)$-good on some neighbourhood of $x_0$.

Without loss of generality, we assume that $ r > 1$.   Fix any multiindex $I$ such that $n\not\in I$ and $n+1\in I$. Recall that the matrix $G(x)$ is defined by~\eqref{subsection_CAcurve_def_gG}. Consider the auxiliary $(r+1)\times (r+1)$ matrix $\tilde{G}(x)$ formed by the rows $i_1,\dots,i_{r-1},n,n+1$ and columns $1,2,i_1+2,\dots,i_{r-1}+2$ of $G(x)$. Note that $i_r=n+1$. Also, consider the matrix $\tilde{\Gamma}$  formed by the rows $1,2,i_1+2,\dots,i_{r-1}+2$ of the matrix $\Gamma$.
Observe that
\begin{equation}\label{vb6789C}
\phi_{I,\Gamma}(x)=\det\tilde G_{\tilde I}(x)\tilde\Gamma\,,
\end{equation}
where $\tilde I=\{1,\dots,r\}$.  Thus the  desired properties of $\phi_{I,\Gamma}(x)$ can be investigated via the lower dimensional matrix $\tilde G$, which has exactly the same structure as $G$. Indeed the matrix $\tilde G$ is the analogue of $G$ with the associated $\vv f$ replaced by
\begin{equation} \label{f_iC}
\tilde{\vv f} (x)    \to  (x,f_{i_1}(x),\dots,f_{i_{r-1}}(x))  \, .
\end{equation}
Note that by Lemma~\ref{lnd}, the map $\tilde{\vv f}$ is $l$-non-degenerate at $x_0$.  With this in mind and without loss of generality assuming that $\phi_{I,\Gamma}$  is given by \eqref{vb6789C},  we are in the position to prove the lemma.

Suppose to start with that $\tilde\Gamma$ has rank $<r$.  Then, we have that  $\phi_{I,\Gamma}$ is identically zero and it follows that  $|\phi_{I,\Gamma}|$ is $(C,\alpha)$-good for any choice of $C$ and $\alpha$.   Now suppose that $\tilde\Gamma$ has rank exactly $r$,  that is $\tilde{\Gamma}\in\Mat^*_\Z(r+1,r)$. Then,  on applying Lemma~\ref{lemma_curve_case_xhree} with $\tilde{\vv f} $ in place of $\vv f$, $\tilde\Gamma$ in place of $\Gamma$, $r+1$ in place of $n$, $\tilde{G}(x)$ in place of $G(x)$ and $\tilde I$ in place of $I$, completes the proof of Lemma~\ref{corollary_curve_case_xhree}.

\end{proof}

\medskip

\begin{proof}[Completion of the proof of Proposition~\ref{proposition_curve_Ca_goodness_2}.] First of all note that Property~\eqref{p1} has already been established in Lemma~\ref{corollary_curve_case_one} if $I\cap\{n,n+1\}=\emptyset$, in
Lemma~\ref{corollary_curve_case_two} if $I\cap\{n,n+1\}=\{n\}$ and in
Lemma~\ref{corollary_curve_case_xhree} if $I\cap\{n,n+1\}=\{n+1\}$.
If $I\cap\{n,n+1\}=\{n,n+1\}$, then $G_I(x)\Gamma$ is readily seen to be constant, which is thus $(C,1/l)$-good. Therefore, for any $1\le r\le n$ and any $\Gamma\in\Mat^*_\Z(n+1,r)$ Property~(i) holds for all choices of $I$.

Regarding Property~{\rm(ii)} of Proposition~\ref{proposition_curve_Ca_goodness_2},
if $r=n$ then it is established in Lemma~\ref{lemma_curve_case_two} and  if $r<n$ it is established in Lemma~\ref{corollary_curve_case_one}.
\end{proof}

\begin{remark}\rm
The constants $C$ and $\rho_B$ that arise from the various lemmas proved in this section may in principle depend on the choice $I$. However, since there are only finitely many different choices of $I$ both the constants in question can  be made independent of I.  Indeed $\rho_B$ has to be taken as  the minimum while $C$ has to be taken to be the maximum of all the possible values over all different choices of $I$. The fact that the maximum  choice for $C$ works for all $I$  is a consequence of  Lemma~\ref{l:13}.
\end{remark}

\section{Proof of Theorem~\ref{t4}}\label{prooft4}

Let $h$ be given by \eqref{h}  and let $x_0 \in \R$ be such that $\vv f$ is non-degenerate at $x_0$. Then, by \eqref{vb100}, \eqref{vb101}, Proposition~\ref{proposition_curve_Ca_goodness_2} and Lemma~\ref{l:13}, there exists a neighbourhood $\cU$ of $x_0$ such that for any collection of linearly independent integer points $\vv v_1,\dots,\vv v_r$ $(1\le r\le n)$ the map

$$x\mapsto \|h(x)\vv v_1\we\dots\we h(x)\vv v_r\|_\infty \mbox{ is $(C,\frac{1}{2l-1})$-good on $3^{n+1}\cU$  } $$
 and
$$
\sup_{x\in B}\|h(x)\vv v_1\we\dots\we h(x)\vv v_r\|_\infty\,\ge\,
c^{\frac{r}{n+1}}\,\rho_{B}
\min_I \Phi_I\,,
$$
where the minimum is taken over  $ I=\{i_1,\dots,i_r\}\subset\big\{1,\dots,\max\{r,n-1\}\big\}$ and    $  \Phi_I $ is given by  \eqref{vb102}.    It follows from the definition of $  \Phi_I $, that for $r\le n$
$$
\min_{I}\Phi_I  \, \ge  \,  \min\{\psi^{-r},Q\}
$$
and consequently
$$
\sup_{x\in B}\|h(x)\vv v_1\we\dots\we h(x)\vv v_r\|_\infty\,\ge\,
c^{\frac{r}{n+1}}\,\rho_{B}\,\min\{\psi^{-r},Q\} \, \ge  \,  1
$$
provided that $Q\ge Q_B$ for some sufficiently large $Q_B$ and $\psi\le \psi_B$ for some sufficiently small $\psi_B$.
If $r=n+1$, then trivially  the map
$$
x\mapsto\|h(x)\vv v_1\we\dots\we h(x)\vv v_r\|_\infty=\|\vv v_1\we\dots\we \vv v_r\|_\infty\ge 1
$$
is constant and hence $(C,\alpha)$-good for the same choice of $\alpha$ and some absolute constant $C>0$.

The upshot of the above is that all the conditions of Theorem~KM are met for any ball $B\subset\cU$, some constants $C,\alpha  > 0 $ and $\rho=1/(n+1)$. Therefore, by \eqref{vb3} and \eqref{vb2}, we obtain that
\begin{equation} \label{vb2+}
|B\setminus \cG(c,Q,\psi)| \; \le  \;   (n+1) \, C 6^{n+1} \left(\frac{c^{1/(n+1)}}{1/(n+1)}
\right)^\alpha |B|\,,
\end{equation}

\noindent where $\alpha =\frac1{2l-1}$. The latter inequality implies ~\eqref{theo_general_result_two+} for a suitably chosen $c>0$ that is independent of $B$. This thereby  completes the proof of the theorem.

\vspace*{0ex}

{\small

}

\vspace*{2ex}

{\footnotesize

\noindent Victor Beresnevich: Department of Mathematics, University of York,\\
\phantom{Victor Beresnevich: }Heslington, York, YO10 5DD, UK\\
\phantom{Victor Beresnevich: }e-mail: vb8@york.ac.uk

\vspace{0mm}

\noindent Robert C. Vaughan: Department of Mathematics, Pennsylvania State University\\
\phantom{Robert C. Vaughan:  }University Park, PA 16802-6401, USA\\
\phantom{Robert C. Vaughan: }e-mail: rvaughan@math.psu.edu

\vspace{0mm}

\noindent Sanju Velani: Department of Mathematics, University of York,\\
\phantom{Sanju Velani: }Heslington, York, YO10 5DD, UK\\
\phantom{Sanju Velani: }e-mail: slv3@york.ac.uk

\vspace{0mm}

\noindent Evgeniy Zorin: Department of Mathematics, University of York,\\
\phantom{Evgeniy Zorin: }Heslington, York, YO10 5DD, UK\\
\phantom{Evgeniy Zorin: }e-mail: evgeniy.zorin@york.ac.uk

}

\end{document}